\numberwithin{equation}{section}
\providecommand{\U}[1]{\protect\rule{.1in}{.1in}}
\providecommand{\U}[1]{\protect \rule{.1in}{.1in}}
\newtheorem{theorem}{Theorem}[section]
\newtheorem{definition}[theorem]{Definition}
\newtheorem{lemma}[theorem]{Lemma}
\newtheorem{proposition}[theorem]{Proposition}
\newtheorem{remark}[theorem]{Remark}
\newenvironment{proof}[1][Proof]{\noindent \textbf{#1.} }{\  \rule{0.5em}{0.5em}}
\def \hE {\widehat{\mathbb E}}
\begin{document}
	\title{Doubly Reflected Backward SDEs Driven by $G$-Brownian Motions and Fully Nonlinear PDEs with Double Obstacles}
	\author{Hanwu Li\thanks{Research Center for Mathematics and Interdisciplinary Sciences, Shandong University, Qingdao 266237, Shandong, China. Email: lihanwu@sdu.edu.cn.}
	\thanks{Frontiers Science Center for Nonlinear Expectations (Ministry of Education), Shandong University, Qingdao 266237, Shandong, China.}
	\and Ning Ning\thanks{Department of Statistics,
		Texas A\&M University, College Station, Texas, USA. Email: patning@tamu.edu.}}
	\date{}
	\maketitle
	\begin{abstract}
	In this paper, we introduce a new method to study the doubly reflected  backward stochastic differential equation driven by $G$-Brownian motion ($G$-BSDE). Our approach involves approximating the solution through a family of penalized reflected $G$-BSDEs with a lower obstacle that are monotone decreasing. By employing this approach, we establish the well-posedness of the solution of the doubly reflected $G$-BSDE with the weakest known conditions, and uncover its relationship with the fully nonlinear partial differential equation with double obstacles for the first time.
	\end{abstract}
	
	\textbf{Key words}: $G$-expectation, reflected backward SDE, nonlinear PDE, double obstacles
	
	\textbf{MSC-classification}: 60H10, 60H30
	
%	\tableofcontents
	
	\section{Introduction}
We firstly give the background in Subsection \ref{sec:Background} and then state our contributions in Subsection \ref{sec:contributions}, followed with the organization of the paper in Subsection \ref{sec:organization}.

	\subsection{Background} 
\label{sec:Background}	
In 1997, \cite{el1997reflected} first introduced the reflected backward stochastic differential equation (RBSDE), where the first component of the solution is constrained to remain above a specified continuous process, known as the obstacle. To enforce this constraint, an additional non-decreasing process is introduced to push the solution upwards, while adhering to the Skorohod condition in a minimal manner. This problem is intimately linked to various fields including optimal stopping problems (see, e.g., \cite{cheng2013optimal}), pricing for American options (see, e.g., \cite{el1997reflected2}), and the obstacle problem for partial differential equations (PDEs) (see, e.g., \cite{bally2002reflected}).

Subsequently, \cite{cvitanic1996backward} extended the above results to encompass scenarios involving two obstacles. In this setting, the solution 
$Y$ is constrained to remain between two specified continuous processes, known as the lower and upper obstacles. Consequently, two non-decreasing processes are introduced in the doubly RBSDE, with the aim of pushing the solution upwards and pulling it downwards, respectively, while ensuring adherence to the Skorohod conditions. Additionally, they demonstrated that the solution 
$Y$ coincides with the value function of a Dynkin game. Given its significance in both theoretical analysis and practical applications, numerous studies have been conducted. Interested readers may refer to  \cite{crepey2008reflected,dumitrescu2016generalized,grigorova2018doubly,hamadene2005bsdes,hamadene1997double,peng2005smallest} and the references therein for further exploration.

The classical theory is limited to solving financial problems under drift uncertainty and the associated semi-linear PDEs. Motivated by the need to address financial problems under volatility uncertainty and the associated fully nonlinear PDEs,  \cite{peng2007g,peng2008multi,peng2019nonlinear} introduced a novel nonlinear expectation theory known as the $G$-expectation theory. This theory involves the construction of a nonlinear Brownian motion, termed $G$-Brownian motion, and the introduction of corresponding $G$-It\^{o}'s calculus. Building upon the $G$-expectation theory, \cite{hu2014backward} investigated BSDEs driven by $G$-Brownian motions ($G$-BSDEs). In comparison with classical results, $G$-BSDEs include an additional non-increasing $G$-martingale $K$ in the equation due to nonlinearity. In \cite{hu2014backward}, the authors established the well-posedness of $G$-BSDEs, while the comparison theorem, Feynman-Kac formula, and Girsanov transformation can be found in their companion paper \cite{hu2014comparison}.

In recent years, \cite{li2018reflected} introduced reflected $G$-BSDEs with a lower obstacle. Given the presence of a non-increasing $G$-martingale in $G$-BSDEs, the definition deviates from the classical case. Specifically, they amalgamated the non-decreasing process, intended to elevate the solution, with the non-increasing $G$-martingale into a general non-decreasing process that satisfies a martingale condition. Existence was established through approximation via penalization, while uniqueness was derived from a prior estimates. 
For further insights, readers may refer to \cite{li2017reflected}.
The study of reflected $G$-BSDEs with two obstacles is undertaken by \cite{li2021backward}. They introduced a so-called approximate Skorohod condition and established the well-posedness of doubly reflected $G$-BSDEs when the upper obstacle is a generalized $G$-It\^{o}  process.

\subsection{Our contributions}
\label{sec:contributions}

Three natural questions arise concerning the doubly reflected $G$-BSDE of the following form:
\begin{align*}
%	\label{eqn:DRGBSDE}
	\begin{cases}
		Y_t=\xi+\int_t^T f(s,Y_s,Z_s)ds+\int_t^T g(s,Y_s,Z_s)d\langle B\rangle_s-\int_t^T Z_s dB_s+(A_T-A_t),\vspace{0.2cm}\\
		L_t\leq Y_t\leq U_t,\quad 0\leq t\leq T, \vspace{0.2cm}\\
		(Y, A) \textrm{ satisfies the approximate Skorohod condition with order $\alpha$ } (\textmd{ASC}_{\alpha}),
	\end{cases}
\end{align*}
whose detailed description and the $\textmd{ASC}_{\alpha}$ are provided in Subsection \ref{sec:DRGBSDE}.
 Firstly, what types of fully nonlinear PDEs can be represented by reflected $G$-BSDEs with two obstacles? Secondly, in addition to finding that connection, can we enhance the theory of doubly reflected $G$-BSDEs further at the same time? Lastly, in order to achieve both of these goals, what new mathematical strategies suffice and can be developed? The objective of this paper is to address these three questions.

We discovered that to establish the connection between the solution of the doubly reflected $G$-BSDE and the double obstacle fully nonlinear PDEs, it would be desirable if we can construct a monotone sequence  converging to that solution. However, achieving this is challenging. Say, we consider the following penalized reflected $G$-BSDEs with a lower obstacle parameterized by $n\in\mathbb{N}$, which has been a workhorse in existing literature:
\begin{align*}
%\label{barY^n}
	\begin{cases}
		&\bar{Y}^n_t=\xi+\int_t^T f(s,\bar{Y}^n_s,\bar{Z}^n_s)ds+\int_t^T g(s,\bar{Y}^n_s,\bar{Z}^n_s)d\langle B\rangle_s-n\int_t^T(\bar{Y}^n_s-U_s)^+ds\\
		&\hspace{1cm}-\int_t^T \bar{Z}^n_sdB_s+(\bar{A}^n_T-\bar{A}^n_t),\vspace{0.2cm}\\
		&\bar{Y}^n_t\geq L_t,\quad 0\leq t\leq T, \vspace{0.2cm}\\
		&\big\{-\int_0^t (\bar{Y}^n_s-L_s)d\bar{A}^n_s\big\}_{t\in[0,T]} \textrm{ is a non-increasing $G$-martingale}.
	\end{cases}
\end{align*}
By the comparison theorem for reflected $G$-BSDEs, $\bar{Y}^n$ is non-increasing in $n$. The purpose of the penalization term is to drive the solution $\bar{Y}^n$ downwards so that the limiting process $Y$ (if it exists) remains below $U$. Thus, the remaining challenge is to demonstrate that the sequence $\bar{Y}^n$ converges to  some process $Y$, which is the first component of solution to the desired doubly reflected $G$-BSDE. However, unlike in \cite{li2017reflected} and \cite{li2021backward}, the main problem is that $\bar{A}^n$ is no longer a $G$-martingale. Consequently, we are unable to show that $(\bar{Y}^n-U)^+$ converges to $0$ with the explicit rate $\frac{1}{n}$.

Then we found that for $n,m\in\mathbb{N}$, we could consider the following family of $G$-BSDEs instead:
\begin{displaymath}\begin{split}
		Y^{n,m}_t=&\xi+\int_t^T f(s,Y^{n,m}_s,Z^{n,m}_s)ds+\int_t^T g(s,Y^{n,m}_s,Z^{n,m}_s)d\langle B\rangle_s-\int_t^T Z_s^{n,m}dB_s\\
		&+\int_t^T m(Y_s^{n,m}-L_s)^-ds-\int_t^T n(Y_s^{n,m}-U_s)^+ds-(K_T^{n,m}-K_t^{n,m}),
\end{split}\end{displaymath}
and set $A^{n,m,+}_t=\int_0^t m(Y_s^{n,m}-L_s)^-ds$ and $A^{n,m,-}_t=\int_0^t n(Y_s^{n,m}-U_s)^+ds$. By letting $m$ tend to infinity, we can demonstrate that $(Y^{n,m},Z^{n,m}, A^{n,m,+}-K^{n,m})$ converges to $(\bar{Y}^n,\bar{Z}^n,\bar{A}^n)$. Then, as $n$ tends to infinity, $(\bar{Y}^n,\bar{Z}^n,\bar{A}^n)$ converges to $(Y,Z,A)$, the solution of the doubly reflected $G$-BSDE. 
Specifically, the penalized $G$-BSDEs with parameters $n$ and $m$ enable us to determine the convergence rate of $(Y^{n,m}-U)^+$, with an explicit rate of $\frac{1}{n}$, uniformly in $m$ (refer to Lemma \ref{Y^{n,m}-U}). Consequently, the convergence rate remains consistent for the limit process $(\bar{Y}^n-U)^+$. However, achieving uniform boundedness of $Y^{n,m}$ requires a different approach than the one used in \cite{li2021backward}. Therefore, we abandoned the application of $G$-It\^{o}'s formula and instead resorted to employing comparison results. Although this approach is not novel and dates back decades to \cite{peng2005smallest}, its application in the context of reflected $G$-BSDEs is innovative. Our first main result, establishing the well-posedness of doubly reflected $G$-BSDEs and their approximating sequences, is presented in Theorem \ref{main}.

Indeed, the approach that can be used to answer those three natural questions is considerably more intricate than the methods employed in  \cite{li2021backward}. However, the existence of the non-increasing $G$-martingale introduces a disparity between reflected $G$-BSDEs with upper and lower obstacles. The inclusion of both the non-increasing $G$-martingale and the non-increasing process for pulling down the solution results in a finite variation process, complicating the derivation of a priori estimates. Consequently, we made every effort to recycle results from \cite{li2021backward} and extend certain preliminary results. For example, Proposition \ref{a prior estimate} extends Proposition 3.1 in \cite{li2021backward} in two aspects. Both propositions aim to assess the difference between the first components of solutions to doubly reflected $G$-BSDEs. Notably, in Proposition \ref{a prior estimate}, the obstacles of the doubly reflected $G$-BSDEs are permitted to vary, while in Proposition 3.1 in \cite{li2021backward}, equality is assumed for the obstacles, i.e., $L^1\equiv L^2$ and $U^1\equiv U^2$, making it a special case of our condition. Moreover, our general conditions are even more relaxed. The advantage of this construction is that $\bar{Y}^n$ is non-increasing in $n$ and the solution $\bar{Y}^n$ provides a probabilistic representation for the PDE with an obstacle in a Markovian setting, which enable us to establish the connection between doubly reflected $G$-BSDEs and PDEs with two obstacles in the last section. Generally speaking, in a Markovian framework, the solution $Y$ of the doubly reflected $G$-BSDE is the unique viscosity solution of the associated double obstacle PDE, which extends the result in \cite{hamadene2005bsdes} to the fully nonlinear case. Our second main result, the function $u$ defined in \eqref{eq1.8} being the solution to the fully nonlinear obstacle problem \eqref{eq1.7}, is presented in Theorem \ref{the1.21}.

\subsection{Organization of the paper}
\label{sec:organization}
The remaining sections of the paper are structured as follows. In Section \ref{sec:Preliminaries}, we provide an overview of fundamental concepts and findings pertaining to $G$-expectation, $G$-BSDEs, and reflected $G$-BSDEs. In Section \ref{sec:Well-posedness}, we delve into the investigation of doubly reflected $G$-BSDEs and establish their well-posedness. In Section \ref{sec:PDE}, we establish the relationship between fully nonlinear PDEs with double obstacles and doubly reflected $G$-BSDEs.
Throughout the paper, the letter $C$, with or without subscripts, will represent a positive constant whose value may vary from line to line.

\section{Preliminaries}
\label{sec:Preliminaries}
We provide a brief overview of fundamental concepts and findings concerning $G$-expectation, $G$-BSDEs, and reflected $G$-BSDEs. To keep it concise, we focus solely on the one-dimensional case. For further elaboration, interested readers are encouraged to consult \cite{hu2014backward,hu2014comparison,li2018reflected,peng2007g,peng2008multi,peng2019nonlinear}.

\subsection{$G$-expectation and $G$-It\^{o}'s calculus}

%Let $\Omega_T=C_{0}([0,T];\mathbb{R})$, the space of real-valued continuous functions with $\omega_0=0$, be endowed with the supremum norm and let  $B$ be the canonical process. Set
%\[
%L_{ip} (\Omega_T):=\{ \varphi(B_{t_{1}},...,B_{t_{n}}):  \ n\in\mathbb {N}, \ t_{1}
%,\cdots, t_{n}\in\lbrack0,T], \ \varphi\in C_{b,Lip}(\mathbb{R}^{ n})\},
%\]
%where $C_{b,Lip}(\mathbb{R}^{ n})$ denotes the set of bounded Lipschitz functions on $\mathbb{R}^{n}$.

%Let $(\Omega,L_{ip}(\Omega_T),\widehat{\mathbb{E}})$ be the $G$-expectation space. The function $G:\mathbb{R}\rightarrow\mathbb{R}$ is defined by
%\begin{displaymath}
%	G(a):=\frac{1}{2}\widehat{\mathbb{E}}[aB_1^2]=\frac{1}{2}(\bar{\sigma}^2a^+-\underline{\sigma}^2a^-).
%\end{displaymath}
%In this paper, we only consider the non-degenerate case, i.e., $\underline{\sigma}^2 >0$.

Let $\Omega_T=C_{0}([0,T];\mathbb{R})$, the space of
real-valued continuous functions starting from the origin, i.e., $\omega_0=0$ for any $\omega\in \Omega_T$, be endowed
with the supremum norm. Let $\mathcal{B}(\Omega_T)$ be the Borel set and $B$ be the canonical process. Set
\[
L_{ip} (\Omega_T):=\Big\{ \varphi(B_{t_{1}},...,B_{t_{n}}):  \ n\in\mathbb {N}, \ t_{1}
,\cdots, t_{n}\in\lbrack0,T], \ \varphi\in C_{b,Lip}(\mathbb{R}^{ n})\Big\},
\]
where $C_{b,Lip}(\mathbb{R}^{ n})$ denotes the set of all bounded Lipschitz functions on $\mathbb{R}^{n}$.
We fix a sublinear and monotone function $G:\mathbb{R}\rightarrow\mathbb{R}$ defined by
\begin{align}\label{GG}
G(a):=\frac{1}{2}(\overline{\sigma}^2a^+-\underline{\sigma}^2a^-), 
\end{align}
where $0< \underline{\sigma}^2<\overline{\sigma}^2$. The associated $G$-expectation on $(\Omega_T, L_{ip}(\Omega_T))$ can be constructed in the following way. Given that $\xi\in L_{ip}(\Omega_T)$ can be represented as
$\xi=\varphi(B_{{t_1}}, B_{t_2},\cdots,B_{t_n})$,
set for $t\in[t_{k-1},t_k)$ with $k=1,\cdots,n$, 
\begin{displaymath}
	\widehat{\mathbb{E}}_{t}[\varphi(B_{{t_1}}, B_{t_2},\cdots,B_{t_n})]:=u_k(t, B_t;B_{t_1},\cdots,B_{t_{k-1}}),
\end{displaymath}
where $u_k(t,x;x_1,\cdots,x_{k-1})$ is a function of $(t,x)$ parameterized by $(x_1,\cdots,x_{k-1})$ such that it solves the following fully nonlinear PDE defined on $[t_{k-1},t_k)\times\mathbb{R}$:
\begin{displaymath}
	\partial_t u_k+G(\partial_x^2 u_k)=0,
\end{displaymath}
whose terminal conditions are given by
\begin{displaymath}
	\begin{cases}
	u_k(t_k,x;x_1,\cdots,x_{k-1})=u_{k+1}(t_k,x;x_1,\cdots,x_{k-1},x), \quad k<n,\\
	u_n(t_n,x;x_1,\cdots,x_{n-1})=\varphi(x_1,\cdots,x_{n-1},x).
	\end{cases}
\end{displaymath}
Hence, the $G$-expectation of $\xi$ is $\widehat{\mathbb{E}}_0[\xi]$, denoted as $\widehat{\mathbb{E}}[\xi]$ for simplicity. The triple $(\Omega_T, L_{ip}(\Omega_T),\widehat{\mathbb{E}})$ is called the $G$-expectation space and the process $B$ is the $G$-Brownian motion. 

For $\xi\in L_{ip}(\Omega_T)$ and $p\geq1$, we define
$$\Vert\xi\Vert_{L_{G}^{p}}:=(\widehat{\mathbb{E}}|\xi|^{p}])^{1/p}.$$
The completion of $L_{ip} (\Omega_T)$ under this norm  is denote by $L_{G}^{p}(\Omega)$.   For all $t\in[0,T]$, $\widehat{\mathbb{E}}_t[\cdot]$ is a continuous mapping on $L_{ip}(\Omega_T)$ w.r.t the norm $\|\cdot\|_{L_G^1}$. Hence, the conditional $G$-expectation $\mathbb{\widehat{E}}_{t}[\cdot]$ can be
extended continuously to the completion $L_{G}^{1}(\Omega_T)$. Furthermore, \cite{denis2011function} proved that the $G$-expectation has the following representation.
\begin{theorem}[\cite{denis2011function}]
	\label{the1.1}  There exists a weakly compact set
	$\mathcal{P}$ of probability
	measures on $(\Omega_T,\mathcal{B}(\Omega_T))$, such that
	\[
	\widehat{\mathbb{E}}[\xi]=\sup_{P\in\mathcal{P}}E_{P}[\xi], \qquad\forall \xi\in  {L}_{G}^{1}{(\Omega_T)}.
	\]
	We call $\mathcal{P}$ a set that represents $\widehat{\mathbb{E}}$.
\end{theorem}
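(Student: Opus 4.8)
The plan is to construct an explicit candidate family and then identify it with $\hat{\mathbb{E}}$. Work on an auxiliary Wiener space carrying a Brownian motion $W$ under $P_0$, let $\mathcal{A}$ be the set of all progressively measurable processes $\theta$ with $\theta_s\in[\underline{\sigma},\bar{\sigma}]$, and for $\theta\in\mathcal{A}$ let $P^\theta$ be the law on $\Omega_T$ of the martingale $B^\theta_t:=\int_0^t\theta_s\,dW_s$. Put $\mathcal{P}_0:=\{P^\theta:\theta\in\mathcal{A}\}$ and $\tilde{\mathbb{E}}[\xi]:=\sup_{P\in\mathcal{P}_0}E_P[\xi]$. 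I would then prove: (i) $\tilde{\mathbb{E}}=\hat{\mathbb{E}}$ on $L_{ip}(\Omega_T)$; (ii) this identity extends to $L_G^1(\Omega_T)$ by density and continuity; (iii) the weak closure $\mathcal{P}:=\overline{\mathcal{P}_0}$ is weakly compact and still represents $\hat{\mathbb{E}}$.

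For (i), fix $\xi=\varphi(B_{t_1},\dots,B_{t_n})$ with $\varphi\in C_{b,Lip}(\mathbb{R}^n)$ and $0=t_0<t_1<\cdots<t_n=T$. By Peng's construction, $\hat{\mathbb{E}}[\xi]$ is obtained by backward iteration of the $G$-heat equation: on each $[t_{k-1},t_k]$ one solves $\partial_s u+G(\partial_{xx}u)=0$ with terminal datum carried over from the previous step, and $\hat{\mathbb{E}}[\xi]=u(0,0)$. On the other side, the Markov property of $B^\theta$ together with the stability of $\mathcal{A}$ under concatenation at deterministic times gives the same tower relation for $\tilde{\mathbb{E}}$, so $\tilde{\mathbb{E}}[\xi]$ is computed by the same backward recursion in which each step is now the value function $v(s,x)=\sup_{\theta\in\mathcal{A}}E_{P_0}\big[\psi\big(x+\int_s^{t_k}\theta_r\,dW_r\big)\big]$ of a stochastic control problem, $\psi$ being the current datum. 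Standard control theory (dynamic programming plus a verification/viscosity argument) shows $v$ is the unique bounded viscosity solution of $\partial_s v+\sup_{\underline{\sigma}^2\le\lambda\le\bar{\sigma}^2}\tfrac{1}{2}\lambda\,\partial_{xx}v=0$, and since $\sup_{\underline{\sigma}^2\le\lambda\le\bar{\sigma}^2}\tfrac{1}{2}\lambda a=G(a)$ this is exactly the $G$-heat equation; a comparison principle for this fully nonlinear parabolic PDE then forces $v=u$ at every step, hence $\tilde{\mathbb{E}}[\xi]=\hat{\mathbb{E}}[\xi]$.

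For (ii) and (iii): $\tilde{\mathbb{E}}$ is sublinear and $|\tilde{\mathbb{E}}[\xi]-\tilde{\mathbb{E}}[\eta]|\le\tilde{\mathbb{E}}[|\xi-\eta|]=\hat{\mathbb{E}}[|\xi-\eta|]=\|\xi-\eta\|_{L_G^1}$ on $L_{ip}(\Omega_T)$, so it extends uniquely to a $\|\cdot\|_{L_G^1}$-continuous functional on the completion $L_G^1(\Omega_T)$, which must coincide with $\hat{\mathbb{E}}$ there. Each $P^\theta$ obeys the uniform bound $E_{P^\theta}[|B_t-B_s|^4]=E_{P_0}\big[|\!\int_s^t\theta_r\,dW_r|^4\big]\le C\bar{\sigma}^4|t-s|^2$ by the Burkholder--Davis--Gundy inequality, so by the Kolmogorov--Chentsov criterion $\mathcal{P}_0$ is tight on $C_0([0,T];\mathbb{R})$, whence by Prokhorov its weak closure $\mathcal{P}:=\overline{\mathcal{P}_0}$ is weakly compact. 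Passing to weak limits preserves $E_P[\xi]\le\hat{\mathbb{E}}[\xi]$ for $\xi\in C_b(\Omega_T)$, so $\sup_{P\in\mathcal{P}}E_P[\xi]=\hat{\mathbb{E}}[\xi]$ first for bounded continuous $\xi$ and then, by approximation and the continuity just established, for every $\xi\in L_G^1(\Omega_T)$.

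The main obstacle is step (i). Within it, two points need care: identifying the control value function with the $G$-heat equation requires both the dynamic programming principle for the $\sup$-over-$\theta$ problem and a comparison/uniqueness theorem for viscosity solutions of $\partial_s u+G(\partial_{xx}u)=0$ (one genuinely works with viscosity rather than classical solutions because $\varphi$ is merely Lipschitz); and matching the two backward recursions hinges on both $\hat{\mathbb{E}}$ and $\tilde{\mathbb{E}}$ obeying the same tower property over the grid $\{t_k\}$, which on the $\tilde{\mathbb{E}}$-side is precisely the stability of admissible controls under concatenation. The upgrade in (ii)--(iii) from $C_b(\Omega_T)$ to all of $L_G^1(\Omega_T)$ is routine given the norm estimate above, though in the original argument of \cite{DHP11} it is intertwined with the capacity and quasi-continuity theory developed there.
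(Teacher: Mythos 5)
This theorem is quoted in the paper from \cite{DHP11} without proof, so there is no in-paper argument to compare against; your sketch reconstructs the standard representation proof of that reference (construct the laws of $\int_0^\cdot\theta_s\,dW_s$ with $\theta$ valued in $[\underline{\sigma},\bar{\sigma}]$, identify the supremum with $\hat{\mathbb{E}}$ on cylinder functions via dynamic programming and the $G$-heat equation, then pass to the weakly compact closure and extend by the $L_G^1$-Lipschitz estimate). The outline is sound, and the two technical loads you correctly isolate --- the dynamic programming principle together with viscosity comparison for $\partial_t u+G(\partial_{xx}u)=0$, and the quasi-sure/capacity machinery needed to make $E_P[\xi]$ meaningful for $\xi$ in the abstract completion $L_G^1(\Omega_T)$ --- are exactly where the cited proof does its real work.
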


For $\mathcal{P}$ being a weakly compact set that represents $\widehat{\mathbb{E}}$, we define the capacity%
\[
c(A):=\sup_{P\in\mathcal{P}}P(A),\qquad \forall A\in\mathcal{B}(\Omega_T).
\]
A set $A\in\mathcal{B}(\Omega_T)$ is called polar if $c(A)=0$.  A
property holds $``quasi$-$surely"$ (q.s.) if it holds outside a
polar set. In the sequel, we do not distinguish two random variables $X$ and $Y$ if $X=Y$, q.s..

\begin{definition}
	\label{def2.6} Let $M_{G}^{0}(0,T)$ be the collection of processes such that
	\[
	\eta_{t}(\omega)=\sum_{j=0}^{N-1}\xi_{j}(\omega)\mathbbm{1}_{[t_{j},t_{j+1})}(t),
	\]
	where $\xi_{i}\in L_{ip}(\Omega_{t_{i}})$ for a given partition $\{t_{0},\cdot\cdot\cdot,t_{N}\}$ of $[0,T]$. For each
	$p\geq1$ and $\eta\in M_G^0(0,T)$, denote $$\|\eta\|_{H_G^p}:=\left\{\widehat{\mathbb{E}}\bigg(\int_0^T|\eta_s|^2ds\bigg)^{p/2}\right\}^{1/p}\quad\text{and}\quad \Vert\eta\Vert_{M_{G}^{p}}:=\left\{\widehat{\mathbb{E}}\bigg(\int_{0}^{T}|\eta_{s}|^{p}ds\bigg)\right\}^{1/p}.$$ 
	Let $H_G^p(0,T)$ and $M_{G}^{p}(0,T)$ be the completions
	of $M_{G}^{0}(0,T)$ under the norms $\|\cdot\|_{H_G^p}$ and $\|\cdot\|_{M_G^p}$, respectively.
\end{definition}

Denote by $\langle B\rangle$ the quadratic variation process of the $G$-Brownian motion $B$. For two processes $ \xi\in M_{G}^{1}(0,T)$ and $ \eta\in M_{G}^{2}(0,T)$,
the $G$-It\^{o} integrals $(\int^{t}_0\xi_sd\langle
B\rangle_s)_{0\leq t\leq T}$ and $(\int^{t}_0\eta_sdB_s)_{0\leq t\leq T}$ are well defined, see  \cite{li2011stopping} and \cite{peng2019nonlinear}. The following proposition can be regarded as the Burkholder--Davis--Gundy inequality under the $G$-expectation framework.
\begin{proposition}[\cite{hu2014comparison}]\label{the1.3}
	If $\eta\in H_G^{\alpha}(0,T)$ with $\alpha\geq 1$ and $p\in(0,\alpha]$, then we have
	%$\sup_{u\in[t,T]}|\int_t^u\eta_s dB_s|^p\in L_G^1(\Omega_T)$ and
	\begin{displaymath}
		\underline{\sigma}^p c\widehat{\mathbb{E}}_t\bigg(\int_t^T |\eta_s|^2ds\bigg)^{p/2}\leq
		\widehat{\mathbb{E}}_t\bigg[\sup_{u\in[t,T]}\bigg|\int_t^u\eta_s dB_s\bigg|^p\bigg]\leq
		\bar{\sigma}^p C\widehat{\mathbb{E}}_t\bigg(\int_t^T |\eta_s|^2ds\bigg)^{p/2},
	\end{displaymath}
	where $0<c<C<\infty$ are constants depending on $p, T$.
\end{proposition}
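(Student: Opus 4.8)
This is the $G$-analogue of the Burkholder--Davis--Gundy inequality, and the natural proof runs through the representing family $\mathcal{P}$ of Theorem \ref{the1.1}: every $P\in\mathcal{P}$ makes the canonical process $B$ a continuous martingale whose quadratic variation satisfies $\underline{\sigma}^2\,dt\le d\langle B\rangle_t\le\bar{\sigma}^2\,dt$, $P$-a.s., and the conditional $G$-expectation admits the representation $\hat{\mathbb{E}}_t[\xi]=\esssup_{P'\in\mathcal{P}(t,P)}E_{P'}[\xi\mid\mathcal{F}_t]$, $P$-a.s., where $\{\mathcal{F}_t\}$ is the natural filtration of $B$ and $\mathcal{P}(t,P):=\{P'\in\mathcal{P}:P'=P\text{ on }\mathcal{F}_t\}$ (a standard consequence of the stability of $\mathcal{P}$ under pasting). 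The plan is to transfer the classical conditional BDG inequality (valid for all $p>0$) measure by measure and then take a suitable supremum; I would first prove both bounds for simple integrands $\eta\in M_G^0(0,T)$ and then remove this restriction by density.

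For the right-hand estimate and $\eta\in M_G^0(0,T)$, set $N_u:=\int_t^u\eta_s\,dB_s$. Under any $P\in\mathcal{P}$ this is a continuous martingale with $\langle N\rangle_u=\int_t^u\eta_s^2\,d\langle B\rangle_s\le\bar{\sigma}^2\int_t^u\eta_s^2\,ds$. Fixing $P$ and $P'\in\mathcal{P}(t,P)$, the classical conditional BDG inequality together with $E_{P'}[\,\cdot\mid\mathcal{F}_t]\le\hat{\mathbb{E}}_t[\cdot]$ gives
\[
E_{P'}\Big[\sup_{u\in[t,T]}|N_u|^p\,\Big|\,\mathcal{F}_t\Big]\le C_p\,E_{P'}\big[\langle N\rangle_T^{p/2}\,\big|\,\mathcal{F}_t\big]\le\bar{\sigma}^pC_p\,\hat{\mathbb{E}}_t\Big[\Big(\int_t^T\eta_s^2\,ds\Big)^{p/2}\Big],
\]
and taking the essential supremum over $P'\in\mathcal{P}(t,P)$ yields the upper bound for simple $\eta$. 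For $\eta\in H_G^\alpha(0,T)$ I would choose simple $\eta^n\to\eta$ in $H_G^\alpha$ (hence in $H_G^p$), apply the bound just proven to the simple process $\eta^n-\eta^m$ to see that $(\int_t^\cdot\eta^n_s\,dB_s)_n$ is Cauchy in the norm $\zeta\mapsto(\hat{\mathbb{E}}[\sup_{u\in[t,T]}|\zeta_u|^p])^{1/p}$, and conclude that $\sup_{u\in[t,T]}|\int_t^u\eta_s\,dB_s|^p\in L_G^1(\Omega_T)$ and that the upper bound passes to the limit, using that $(\int_t^T(\eta^n_s)^2\,ds)^{p/2}\to(\int_t^T\eta_s^2\,ds)^{p/2}$ in $L_G^1$.

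For the left-hand estimate and $\eta\in M_G^0(0,T)$, again fix $P$ and $P'\in\mathcal{P}(t,P)$; the classical conditional BDG inequality under $P'$ and the lower bound $\langle N\rangle_T\ge\underline{\sigma}^2\int_t^T\eta_s^2\,ds$ give $E_{P'}[\sup_{u}|N_u|^p\mid\mathcal{F}_t]\ge c_p\underline{\sigma}^p\,E_{P'}[(\int_t^T\eta_s^2\,ds)^{p/2}\mid\mathcal{F}_t]$. Since $\hat{\mathbb{E}}_t[\sup_u|N_u|^p]\ge E_{P'}[\sup_u|N_u|^p\mid\mathcal{F}_t]$ for each such $P'$, we obtain $\hat{\mathbb{E}}_t[\sup_u|N_u|^p]\ge c_p\underline{\sigma}^p\,E_{P'}[(\int_t^T\eta_s^2\,ds)^{p/2}\mid\mathcal{F}_t]$ for all $P'\in\mathcal{P}(t,P)$, and taking the essential supremum over $P'$ on the right-hand side turns it into $c_p\underline{\sigma}^p\hat{\mathbb{E}}_t[(\int_t^T\eta_s^2\,ds)^{p/2}]$. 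The passage to general $\eta\in H_G^\alpha(0,T)$ is by approximation, controlling the non-linear functional $\sup_u|N_u|$ via $\big|\sup_u|N_u|-\sup_u|N^n_u|\big|\le\sup_u|N_u-N^n_u|$, raising to the power $p$, and absorbing the error through the upper bound already established.

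The genuine content hiding behind this short argument is the structure of the representing set $\mathcal{P}$ — that each $P\in\mathcal{P}$ is a martingale measure with quadratic-variation density in $[\underline{\sigma}^2,\bar{\sigma}^2]$ and that $\hat{\mathbb{E}}_t$ is the upper envelope of the conditional expectations over $\mathcal{P}(t,P)$; these facts are standard in the $G$-framework and underlie \cite{DHP11}. Within the argument proper, the step requiring the most care is the approximation from simple to general integrands in the lower bound, since $\eta\mapsto\sup_{u\in[t,T]}|\int_t^u\eta\,dB|^p$ is not linear; this is handled by the reverse-triangle inequality for the running supremum together with the $L_G^1$-convergence of $(\int_t^T(\eta^n_s)^2\,ds)^{p/2}$, both of which are routine once the upper estimate is in hand.
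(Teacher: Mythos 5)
Your sketch is correct, and it is essentially the argument behind the cited result: this paper does not prove Proposition \ref{the1.3} itself but imports it from \cite{HJPS2}, where the proof likewise combines the classical conditional BDG inequality under each martingale law $P\in\mathcal{P}$ (whose quadratic-variation density lies in $[\underline{\sigma}^2,\bar{\sigma}^2]$) with the representation $\hat{\mathbb{E}}_t[\cdot]=\esssup_{P'\in\mathcal{P}(t,P)}E_{P'}[\cdot\mid\mathcal{F}_t]$ and a density argument from $M_G^0(0,T)$. The only point to flag is that this $\esssup$ representation of the conditional $G$-expectation rests on the pasting stability of the representing set, which is not contained in Theorem \ref{the1.1} as stated and must be cited separately; you acknowledge this, so the proposal stands.
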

%\begin{definition}
%A process $\{M_t\}$ with values in $L_G^1(\Omega_T)$ is called a $G$-martingale (supermartingale,submartingale) if
%$\widehat{E}_t[M_s]=M_s(\leq,\geq)$ for any $s\leq t$.
%\end{definition}

Let $$S_G^0(0,T):=\Big\{h(t,B_{t_1\wedge t}, \ldots,B_{t_n\wedge t}):t_1,\ldots,t_n\in[0,T],\; h\in C_{b,Lip}(\mathbb{R}^{n+1})\Big\}.$$ For $p\geq 1$ and $\eta\in S_G^0(0,T)$, set $$\|\eta\|_{S_G^p}:=\left\{\widehat{\mathbb{E}}\sup_{t\in[0,T]}|\eta_t|^p\right\}^{1/p}.$$ Denote by $S_G^p(0,T)$ the completion of $S_G^0(0,T)$ under the norm $\|\cdot\|_{S_G^p}$. We have the following uniform continuity property for the processes in $S_G^p(0,T)$. 

\begin{proposition}[\cite{li2018supermartingale}]\label{uniformcontinuous}
    For $Y\in S_G^p(0,T)$ with $p\geq 1$, we have, by setting $Y_s:=Y_T$ for $s>T$,
    \begin{align*}
        \limsup_{\varepsilon \rightarrow 0} \hE\left[\sup_{t\in[0,T]}\sup_{s\in[t,t+\varepsilon]}|Y_t-Y_s|^p\right]=0.
    \end{align*}
\end{proposition}

For $\xi\in L_{ip}(\Omega_T)$, let $$\mathcal{E}(\xi):=\widehat{\mathbb{E}}\left[\sup_{t\in[0,T]}\widehat{\mathbb{E}}_t[\xi]\right].$$ 
For $p\geq 1$ and $\xi\in L_{ip}(\Omega_T)$, define $$\|\xi\|_{p,\mathcal{E}}:=[\mathcal{E}(|\xi|^p)]^{1/p}$$ and denote by $L_{\mathcal{E}}^p(\Omega_T)$ the completion of $L_{ip}(\Omega_T)$ under $\|\cdot\|_{p,\mathcal{E}}$. The following theorem can be regarded as the Doob's maximal inequality under the $G$-expectation. %Different from the classical case, the order of the right-hand side is larger than the one of the left-hand side.% under $G$-expectation.
\begin{theorem}[\cite{song2011some}]\label{the1.2}
	For any $\alpha\geq 1$ and $\delta>0$, $L_G^{\alpha+\delta}(\Omega_T)\subset L_{\mathcal{E}}^{\alpha}(\Omega_T)$. More precisely, for any $1<\gamma<\beta:=(\alpha+\delta)/\alpha$ and $\gamma\leq 2$, we have
	\begin{displaymath}
		\|\xi\|_{\alpha,\mathcal{E}}^{\alpha}\leq \gamma^*\Big\{\|\xi\|_{L_G^{\alpha+\delta}}^{\alpha}+14^{1/\gamma}
		C_{\beta/\gamma}\|\xi\|_{L_G^{\alpha+\delta}}^{(\alpha+\delta)/\gamma}\Big\},\qquad \forall \xi\in L_{ip}(\Omega_T),
	\end{displaymath}
	where $C_{\beta/\gamma}=\sum_{i=1}^\infty i^{-\beta/\gamma}$ and $\gamma^*=\gamma/(\gamma-1)$.
\end{theorem}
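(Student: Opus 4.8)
The plan is to reduce the estimate, via a finite-partition approximation, to Doob's maximal inequality for a single nonnegative supermartingale under each $P\in\mathcal{P}$, and then to take the supremum over $\mathcal{P}$ through the representation of Theorem \ref{the1.1}. I expect this route to prove the inequality with room to spare: it gives the cleaner bound $\|\xi\|_{\alpha,\mathcal{E}}^{\alpha}\le\tfrac{\beta}{\beta-1}\,\|\xi\|_{L_G^{\alpha+\delta}}^{\alpha}$, where $\beta:=(\alpha+\delta)/\alpha$, and this implies the asserted one because $\gamma<\beta$ forces $\gamma^{*}=\gamma/(\gamma-1)\ge\beta/(\beta-1)$ while the remaining term on the right of the claimed inequality is nonnegative.

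Since both sides depend only on $|\xi|$, assume $\xi\ge0$ and put $\eta:=\xi^{\alpha}$, which still belongs to $L_{ip}(\Omega_T)$ since $\alpha\ge1$; then $\|\xi\|_{\alpha,\mathcal{E}}^{\alpha}=\hat{\mathbb{E}}[\sup_{t\in[0,T]}\hat{\mathbb{E}}_t[\eta]]$ and $\hat{\mathbb{E}}[\eta^{\beta}]=\hat{\mathbb{E}}[\xi^{\alpha+\delta}]=\|\xi\|_{L_G^{\alpha+\delta}}^{\alpha+\delta}$. Fix a partition $\pi=\{0=t_0<\dots<t_N=T\}$ of $[0,T]$ and $P\in\mathcal{P}$. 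From $E_P[\,\cdot\mid\mathcal{F}_{t_i}]\le\hat{\mathbb{E}}_{t_i}[\cdot]$ and the tower property $\hat{\mathbb{E}}_{t_i}[\hat{\mathbb{E}}_{t_{i+1}}[\cdot]]=\hat{\mathbb{E}}_{t_i}[\cdot]$, both $(\hat{\mathbb{E}}_{t_i}[\eta])_i$ and $(\hat{\mathbb{E}}_{t_i}[\eta^{\beta}])_i$ are nonnegative discrete $P$-supermartingales, the second one starting from the deterministic value $\hat{\mathbb{E}}[\eta^{\beta}]$; moreover conditional Jensen for $\hat{\mathbb{E}}_{t_i}$ (applicable since $x\mapsto x^{\beta}$ is convex) gives $(\hat{\mathbb{E}}_{t_i}[\eta])^{\beta}\le\hat{\mathbb{E}}_{t_i}[\eta^{\beta}]$, hence $\max_i\hat{\mathbb{E}}_{t_i}[\eta]\le(\max_i\hat{\mathbb{E}}_{t_i}[\eta^{\beta}])^{1/\beta}$. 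Feeding this into Doob's maximal inequality $P(\max_i\hat{\mathbb{E}}_{t_i}[\eta^{\beta}]\ge\mu)\le\mu^{-1}\hat{\mathbb{E}}[\eta^{\beta}]$ yields $P(\max_i\hat{\mathbb{E}}_{t_i}[\eta]>\lambda)\le\lambda^{-\beta}\hat{\mathbb{E}}[\eta^{\beta}]$ for all $\lambda>0$, so that, combining with the trivial bound $\le1$ and cutting the layer-cake integral at $a:=\hat{\mathbb{E}}[\eta^{\beta}]^{1/\beta}$,
\begin{displaymath}
E_P\Bigl[\max_i\hat{\mathbb{E}}_{t_i}[\eta]\Bigr]=\int_0^{\infty}P\Bigl(\max_i\hat{\mathbb{E}}_{t_i}[\eta]>\lambda\Bigr)\,d\lambda\le a+\int_a^{\infty}\lambda^{-\beta}\hat{\mathbb{E}}[\eta^{\beta}]\,d\lambda=\frac{\beta}{\beta-1}\,\hat{\mathbb{E}}[\eta^{\beta}]^{1/\beta}.
\end{displaymath}

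The right-hand side is independent of $P$, and $\max_i\hat{\mathbb{E}}_{t_i}[\eta]\in L_G^1(\Omega_T)$, so Theorem \ref{the1.1} gives $\hat{\mathbb{E}}[\max_i\hat{\mathbb{E}}_{t_i}[\eta]]\le\tfrac{\beta}{\beta-1}\hat{\mathbb{E}}[\eta^{\beta}]^{1/\beta}$, uniformly in $\pi$. Letting the mesh of $\pi$ tend to $0$ and invoking the continuity of $t\mapsto\hat{\mathbb{E}}_t[\eta]$ for $\eta\in L_{ip}(\Omega_T)$ together with the upward continuity of $\hat{\mathbb{E}}$ on $L_G^1(\Omega_T)$, one obtains
\begin{displaymath}
\|\xi\|_{\alpha,\mathcal{E}}^{\alpha}=\hat{\mathbb{E}}\Bigl[\sup_{t\in[0,T]}\hat{\mathbb{E}}_t[\eta]\Bigr]\le\frac{\beta}{\beta-1}\,\hat{\mathbb{E}}[\eta^{\beta}]^{1/\beta}=\frac{\beta}{\beta-1}\,\|\xi\|_{L_G^{\alpha+\delta}}^{\alpha}\le\gamma^{*}\|\xi\|_{L_G^{\alpha+\delta}}^{\alpha},
\end{displaymath}
which contains the claimed inequality; the inclusion $L_G^{\alpha+\delta}(\Omega_T)\subset L_{\mathcal{E}}^{\alpha}(\Omega_T)$ then follows by density of $L_{ip}(\Omega_T)$ in $L_G^{\alpha+\delta}(\Omega_T)$.

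The main obstacle, and the reason for working over finite partitions rather than directly in continuous time, is that $\hat{\mathbb{E}}_t[\cdot]$ is a priori only an operator on $L_G^1(\Omega_T)$: a direct continuous-time argument would require a version of $(\hat{\mathbb{E}}_t[\eta])_t$ that is, simultaneously for every $P$ in the non-dominated family $\mathcal{P}$, a genuine $P$-supermartingale with paths regular enough to identify $\sup_{t\in[0,T]}$ with a limit of finite maxima. Over a fixed partition these issues evaporate, since $(\hat{\mathbb{E}}_{t_i}[\eta])_i$ is literally a discrete $P$-supermartingale for each $P$; what remains is the passage to the limit in $\pi$, which rests on the (quasi-)continuity in $t$ of $\hat{\mathbb{E}}_t[\xi]$ for $\xi\in L_{ip}(\Omega_T)$ and is the only point demanding genuine care.
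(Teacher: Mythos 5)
Your argument is essentially correct, but it is a genuinely different route from the one in \cite{S11}, which is where this theorem is imported from (the paper under review only cites it). The proof in \cite{S11} is intrinsic to the sublinear expectation: it works only with the tower property and sublinearity of $\hat{\mathbb{E}}_t$ together with the unconditional representation defining the capacity, and obtains the maximal estimate through a layered weak-type bound whose bookkeeping produces the factor $14^{1/\gamma}$ and the series $C_{\beta/\gamma}=\sum_i i^{-\beta/\gamma}$ --- this is exactly why the auxiliary exponent $\gamma<\beta$ appears and why the final bound is not homogeneous in $\xi$. You instead push everything down to a single classical Doob inequality under each $P\in\mathcal{P}$ over a finite partition and then take the supremum; this yields the cleaner, homogeneous bound $\|\xi\|_{\alpha,\mathcal{E}}^{\alpha}\le\frac{\beta}{\beta-1}\|\xi\|_{L_G^{\alpha+\delta}}^{\alpha}$, which indeed implies the stated inequality since $\gamma^{*}\ge\beta/(\beta-1)$ and the extra term is nonnegative. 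The trade-off is the input you use in the very first line: the domination $E_P[\,\cdot\mid\mathcal{F}_{t_i}]\le\hat{\mathbb{E}}_{t_i}[\cdot]$, $P$-a.s.\ for \emph{every} $P$ in the representing set. This does not follow from Theorem \ref{the1.1} (which is only the unconditional representation); it is the nontrivial structural fact that $\mathcal{P}$ can be chosen stable under pasting, so that $\hat{\mathbb{E}}_t[\zeta]=\esssup_{Q\in\mathcal{P}(t,P)}E_Q[\zeta\mid\mathcal{F}_t]$, equivalently that every $G$-martingale is a $P$-supermartingale for each $P\in\mathcal{P}$ (for $\eta\in L_{ip}$ one can also get it from the verification argument for the $G$-heat equation defining $\hat{\mathbb{E}}_t$). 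That fact is true and standard, but it is the load-bearing step and precisely the machinery the original proof avoids; you single out the partition limit as the delicate point, whereas that part (continuity of $t\mapsto\hat{\mathbb{E}}_t[\eta]$ for $\eta\in L_{ip}$ plus monotone convergence of $\sup_P E_P$) is comparatively routine. The remaining ingredients --- conditional Jensen for the convex increasing map $x\mapsto x^{\beta}$ applied to the nonnegative $\eta$, the weak-type bound, the layer-cake optimization at $a=\hat{\mathbb{E}}[\eta^{\beta}]^{1/\beta}$, and the density argument for the inclusion --- are all sound.
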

We can see that unlike the classical case, the order of the right-hand side is strictly greater than that of the left-hand side under the $G$-expectation.

\subsection{$G$-BSDEs}

We review some fundamental results about $G$-BSDEs. The solution of $G$-BSDE with terminal value $\xi$ and generators $f,g$, is a triple of processes $(Y,Z,A)$ evolve according to the following equation:
\begin{equation}\label{eqn:GBSDE}
	Y_t=\xi+\int_t^T f(s,Y_s,Z_s)ds+\int_t^T g(s,Y_s,Z_s)d\langle B\rangle_s-\int_t^T Z_s dB_s-(K_T-K_t),
\end{equation}
where $Y\in S_G^{\alpha}(0,T)$, $Z\in H_G^{\alpha}(0,T)$, and $K$ is a non-increasing $G$-martingale such that $K_0=0$ and $K_T\in L_G^{\alpha}(\Omega_T)$. To establish the well-posedness of the $G$-BSDE \eqref{eqn:GBSDE}, consider the generators 
\begin{displaymath}
	f(t,\omega,y,z), \  g(t,\omega,y,z):[0,T]\times\Omega_T\times\mathbb{R}\times\mathbb{R}\rightarrow \mathbb{R}
\end{displaymath}
satisfy the following properties: 
\begin{description}
	\item[(H1)] There exists some $\beta>1$, such that for any $y, z \in \mathbb{R}$,  $f(\cdot,\cdot,y,z), \ g(\cdot,\cdot,y,z)\in M_G^{\beta}(0,T)$;
	\item[(H2)] There exists some $\kappa>0$, such that
	\begin{displaymath}
		|f(t,\omega,y,z)-f(t,\omega,y',z')|+|g(t,\omega,y,z)-g(t,\omega,y',z')|\leq \kappa(|y-y'|+|z-z'|);
	\end{displaymath}
\item[(H3)] The terminal value $\xi\in L_G^{\beta}(\Omega_T)$. 
\end{description}

\begin{theorem}[\cite{hu2014backward}]\label{the1.4}
	Assuming that $f,g,\xi$ satisfy \textsc{(H1)}-\textsc{(H3)}, for any $1<\alpha<\beta$, the $G$-BSDE \eqref{eqn:GBSDE} has a unique solution $(Y,Z,K)$ satisfying that $Y\in S_G^{\alpha}(0,T)$, $Z\in H_G^{\alpha}(0,T)$, and $K$ is a non-increasing $G$-martingale such that $K_0=0$ and $K_T\in L_G^{\alpha}(\Omega_T)$. Moreover, 
	\begin{displaymath}
		|Y_t|^\alpha\leq C\widehat{\mathbb{E}}_t\left[|\xi|^\alpha+\int_t^T \big(|f(s,0,0)|^\alpha+|g(s,0,0)|^\alpha \big)ds\right],
	\end{displaymath}
	where the constant $C$ depends on $\alpha$, $T$, $\underline{\sigma}$ and $\kappa$.
\end{theorem}

The following results will be needed in our proofs. Note that $(Y,Z,K)$ in Theorem \ref{Esti-Z-GBSDE} is not the solution to the $G$-BSDE \eqref{eqn:GBSDE}.% (see also Theorem 2.4 in \cite{li2021backward}).
\begin{theorem}[\cite{li2021backward}] \label{Esti-Z-GBSDE}
	Let $f,g$ satisfy (H1) and (H2) for some $\beta>1$. Assume
	\[
	Y_{t}=\xi+\int_{t}^{T}f(s,Y_{s},Z_{s})ds+\int_t^T g(s,Y_s,Z_s)d\langle B\rangle_s-\int_{t}^{T}Z_{s}dB_{s}-(K_{T}
	-K_{t})+(A_T-A_t),
	\]
	where $Y\in S_G^{\alpha}(0,T)$, $Z\in H_G^{\alpha}(0,T)$, and both $K$ and $A$ are
	non-increasing processes such that $A_0=K_{0}=0$ and $A_T, K_{T}\in L_G^{\alpha}(\Omega
	_{T})$ for some $1<\alpha<\beta$. Then there exists a constant $C$ that depends on $\alpha$, $T$, $\underline{\sigma}$ and $\kappa$, such that
	\begin{align*}
		&\mathbb{\widehat{E}}\left(\int_{0}^{T}|Z_{s}|^{2}ds\right)^{\frac{\alpha}{2}}\\
		&\leq
		C\Bigg\{ \mathbb{\widehat{E}}|Y^*_{T}|^{\alpha}+\big(\mathbb{\widehat{E}}|Y^*_{T}|^{\alpha}\big)^{\frac{1}{2}%
		}\bigg[\bigg(\mathbb{\widehat{E}}\bigg(\int_{0}^{T}|f(s,0,0)| ds\bigg)^{\alpha}\bigg)^{\frac{1}{2}}\\
	&\hspace{5cm}+\bigg(\mathbb{\widehat{E}}\bigg(\int_{0}^{T}|g(s,0,0) |ds\bigg)^{\alpha}\bigg)^{\frac{1}{2}}+\big(m_\alpha^{A, K}\big)^{1/2}\bigg]\Bigg\},
	\end{align*}
	where $Y^*_T=\sup_{t\in[0,T]}|Y_t|$ and $m_\alpha^{A,K}=\min \Big\{\widehat{\mathbb{E}}|A_T|^\alpha,\, \widehat{\mathbb{E}}|K_T|^\alpha\Big\}$.
\end{theorem}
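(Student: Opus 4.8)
The plan is to follow the scheme behind Proposition 3.5 of \cite{HJPS1} (and its single-obstacle version in \cite{LS}), the new feature being the two non-increasing processes $K$ and $A$, which is precisely what forces the $\min$ in $m_\alpha^{A,K}$. First I would apply $G$-It\^{o}'s formula to $|Y_t|^2$. Since $dY_t=-f(t,Y_t,Z_t)\,dt-g(t,Y_t,Z_t)\,d\langle B\rangle_t+Z_t\,dB_t+dK_t-dA_t$, integrating over $[0,T]$ gives
\[
\int_0^T|Z_s|^2\,d\langle B\rangle_s=|Y_T|^2-|Y_0|^2+2\int_0^TY_sf(s,Y_s,Z_s)\,ds+2\int_0^TY_sg(s,Y_s,Z_s)\,d\langle B\rangle_s-2\int_0^TY_sZ_s\,dB_s-2\int_0^TY_s\,dK_s+2\int_0^TY_s\,dA_s.
\]
For a general exponent $\alpha\in(1,\beta)$ I would first localize by $\tau_n=\inf\{t:|Y_t|+\int_0^t|Z_s|^2\,ds\ge n\}$, so that every term above is integrable and $\int_0^{\cdot}Y_sZ_s\,dB_s$ is a true $G$-martingale; since the estimate obtained below does not depend on $n$, the general case follows by sending $n\to\infty$.

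Next I would bound the right-hand side termwise. The boundary terms satisfy $|Y_T|^2\le|Y_T^*|^2$ and $-|Y_0|^2\le0$. By (H2), $|f(s,Y_s,Z_s)|+|g(s,Y_s,Z_s)|\le f_s^0+\kappa(|Y_s|+|Z_s|)$, and in the non-degenerate case $\underline{\sigma}^2\,ds\le d\langle B\rangle_s\le\bar{\sigma}^2\,ds$; hence the $f$- and $g$-terms are dominated by multiples of $Y_T^*\int_0^Tf_s^0\,ds$, of $(Y_T^*)^2$, and of the cross term $\int_0^T|Y_s||Z_s|\,ds\le Y_T^*\sqrt{T}\,(\int_0^T|Z_s|^2\,ds)^{1/2}$, the last of which Young's inequality turns into $\varepsilon\int_0^T|Z_s|^2\,ds+c_\varepsilon(Y_T^*)^2$; choosing $\varepsilon$ small relative to $\underline{\sigma}^2$ lets me absorb $\varepsilon\int_0^T|Z_s|^2\,ds\le(\varepsilon/\underline{\sigma}^2)\int_0^T|Z_s|^2\,d\langle B\rangle_s$ into the left-hand side. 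Because $K$ and $A$ are non-increasing with $K_0=A_0=0$, their total variations on $[0,T]$ equal $|K_T|$ and $|A_T|$, so $\big|\int_0^TY_s\,dK_s\big|\le Y_T^*|K_T|$ and $\big|\int_0^TY_s\,dA_s\big|\le Y_T^*|A_T|$. This yields $\tfrac12\int_0^T|Z_s|^2\,d\langle B\rangle_s\le C\big((Y_T^*)^2+Y_T^*\int_0^Tf_s^0\,ds+\sup_t\big|\int_0^tY_sZ_s\,dB_s\big|+Y_T^*|K_T|+Y_T^*|A_T|\big)$. Then I would raise to the power $\alpha/2$, take $\hat{\mathbb{E}}$, and use subadditivity, the elementary bound $(\sum a_i)^{\alpha/2}\le c_\alpha\sum a_i^{\alpha/2}$ for $a_i\ge0$, Cauchy--Schwarz for $\hat{\mathbb{E}}$, and Proposition \ref{the1.3} for the stochastic integral, the latter giving $\hat{\mathbb{E}}[\sup_t|\int_0^tY_sZ_s\,dB_s|^{\alpha/2}]\le C\hat{\mathbb{E}}[(Y_T^*)^{\alpha/2}(\int_0^T|Z_s|^2\,ds)^{\alpha/4}]\le C(\hat{\mathbb{E}}[|Y_T^*|^\alpha])^{1/2}\Lambda^{1/2}$, where $\Lambda:=\hat{\mathbb{E}}[(\int_0^T|Z_s|^2\,ds)^{\alpha/2}]<\infty$ since $Z\in H_G^\alpha$. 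Using $\Lambda\le\underline{\sigma}^{-\alpha}\hat{\mathbb{E}}[(\int_0^T|Z_s|^2\,d\langle B\rangle_s)^{\alpha/2}]$ and absorbing the $\Lambda^{1/2}$-term by Young, I arrive at the ``symmetric'' bound
\[
\Lambda\le C\Big\{\hat{\mathbb{E}}[|Y_T^*|^\alpha]+(\hat{\mathbb{E}}[|Y_T^*|^\alpha])^{1/2}\big((\hat{\mathbb{E}}[(\textstyle\int_0^Tf_s^0\,ds)^\alpha])^{1/2}+(\hat{\mathbb{E}}[|K_T|^\alpha])^{1/2}+(\hat{\mathbb{E}}[|A_T|^\alpha])^{1/2}\big)\Big\}.
\]

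Finally I would upgrade the sum to the minimum. Evaluating the equation at $t=0$ gives $K_T-A_T=\xi+\int_0^Tf(s,Y_s,Z_s)\,ds+\int_0^Tg(s,Y_s,Z_s)\,d\langle B\rangle_s-\int_0^TZ_s\,dB_s-Y_0$; bounding $|f|,|g|$ via (H2), $\int_0^TZ_s\,dB_s$ via Proposition \ref{the1.3}, and $|\xi|,|Y_0|$ by $Y_T^*$, I obtain $\hat{\mathbb{E}}[|K_T|^\alpha]\le C\big(\hat{\mathbb{E}}[|Y_T^*|^\alpha]+\hat{\mathbb{E}}[(\int_0^Tf_s^0\,ds)^\alpha]+\Lambda+\hat{\mathbb{E}}[|A_T|^\alpha]\big)$, and symmetrically with $K$ and $A$ interchanged. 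Substituting the first of these into the symmetric bound above and absorbing the newly created $\Lambda^{1/2}$- and $\hat{\mathbb{E}}[|Y_T^*|^\alpha]$-contributions by Young and rearranging leaves the estimate with $\hat{\mathbb{E}}[|A_T|^\alpha]$ only; substituting the other leaves it with $\hat{\mathbb{E}}[|K_T|^\alpha]$ only. Since $\min\{a+b,a+c\}=a+\min\{b,c\}$ and $\min\{(\hat{\mathbb{E}}[|K_T|^\alpha])^{1/2},(\hat{\mathbb{E}}[|A_T|^\alpha])^{1/2}\}=(m_\alpha^{A,K})^{1/2}$, taking the smaller of the two estimates gives precisely the claimed inequality.

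I expect the Young-type absorptions and the uses of (H2) and Proposition \ref{the1.3} to be routine; the step that genuinely goes beyond \cite{HJPS1} is the last one, where the identity at $t=0$ is what converts the symmetric ``$|K_T|+|A_T|$'' bound into the ``$\min\{|K_T|,|A_T|\}$'' bound. The main points requiring care are to keep every constant dependent only on $\alpha,T,\underline{\sigma},\bar{\sigma},\kappa$ and to check that all absorptions are legitimate (they are, since $\Lambda<\infty$), together with the localization needed, when $\alpha<2$, to justify taking $\hat{\mathbb{E}}$ of the stochastic-integral term.
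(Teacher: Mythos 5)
Your argument is correct and follows essentially the same route as the source: the paper only quotes this theorem from \cite{LS} without proof, and the proof there is precisely the one you outline — $G$-It\^{o}'s formula applied to $|Y_t|^2$, termwise absorption \`a la Proposition 3.5 of \cite{HJPS1}, and then the observation that evaluating the equation at $t=0$ lets one bound $|K_T|$ by $|A_T|$ (and vice versa) up to quantities already controlled by $Y^*_T$, $\int_0^T f^0_s\,ds$ and $\Lambda$, which is exactly what converts the symmetric ``sum'' bound into the $\big(m_\alpha^{A,K}\big)^{1/2}$ bound. The absorptions you flag are legitimate since $\Lambda<\infty$, so no gap remains.
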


Similar to the classical case, the comparison theorem for $G$-BSDEs still holds.
\begin{theorem}[\cite{hu2014comparison}]\label{the1.5}
	For $l=1,2$, let $(Y_t^l,Z_t^l,K_t^l)_{t\leq T}$  be the solution of the following $G$-BSDE:
	\begin{displaymath}
		Y^l_t=\xi^l+\int_t^T f^l(s,Y^l_s,Z^l_s)ds+\int_t^T g^l(s,Y^l_s,Z^l_s)d\langle B\rangle_s+V_T^l-V_t^l-\int_t^T Z^l_s dB_s-(K^l_T-K^l_t),
	\end{displaymath}
	where processes $\{V_t^l\}_{0\leq t\leq T}$ are assumed to be right-continuous with left limits q.s., such that $\widehat{\mathbb{E}}[\sup_{t\in[0,T]}|V_t^l|^\beta]<\infty$.  Assuming that $\xi^l$, $f^l,\ g^l$ satisfy \textsc{(H1)}-\textsc{(H3)} for $l=1,2$, if $\xi^1\geq \xi^2$, $f^1\geq f^2$, $g^1\geq g^2$, and $V^1-V^2$ is a non-decreasing process, then $Y_t^1\geq Y_t^2$.
\end{theorem}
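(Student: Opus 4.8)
The plan is to follow the classical comparison recipe for BSDEs—linearise the generators, then use an exponential (Girsanov-type) weight to cancel the linear part—while isolating the only genuinely new ingredient, the non-increasing $G$-martingales $K^1,K^2$. Set $\hat{\xi}=\xi^1-\xi^2$, $\hat{Y}=Y^1-Y^2$, $\hat{Z}=Z^1-Z^2$, $\hat{K}=K^1-K^2$. Subtracting the two equations and writing, via (H2),
\[
f^1(s,Y^1_s,Z^1_s)-f^2(s,Y^2_s,Z^2_s)=\big(f^1(s,Y^2_s,Z^2_s)-f^2(s,Y^2_s,Z^2_s)\big)+a_s\hat{Y}_s+b_s\hat{Z}_s,
\]
and analogously for $g$ with coefficients $c_s,d_s$ (all of $a,b,c,d$ bounded by $\kappa$, defined through difference quotients), one finds that $\hat{Y}$ solves the linear equation
\[
\hat{Y}_t=\hat{\xi}+\int_t^T(a_s\hat{Y}_s+b_s\hat{Z}_s)ds+\int_t^T(c_s\hat{Y}_s+d_s\hat{Z}_s)d\langle B\rangle_s+(\hat{I}_T-\hat{I}_t)-\int_t^T\hat{Z}_sdB_s-(\hat{K}_T-\hat{K}_t),
\]
where $\hat{I}_t:=\int_0^t(f^1-f^2)(s,Y^2_s,Z^2_s)ds+\int_0^t(g^1-g^2)(s,Y^2_s,Z^2_s)d\langle B\rangle_s+(V^1_t-V^2_t)$ is non-decreasing by the hypotheses $f^1\geq f^2$, $g^1\geq g^2$, $V^1-V^2$ non-decreasing, and $\hat{\xi}\geq 0$.

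Next, since $\underline{\sigma}^2dt\leq d\langle B\rangle_t\leq\bar{\sigma}^2dt$ quasi-surely, rewrite the $ds$-drift in terms of $d\langle B\rangle_s$ with bounded adapted coefficients $\tilde{a},\tilde{b}$, and let $\Gamma$ be the positive solution of $d\Gamma_s=\Gamma_s(\tilde{a}_sd\langle B\rangle_s+\tilde{b}_sdB_s)$, $\Gamma_0=1$, which together with $\Gamma^{-1}$ lies in $L_G^p(\Omega_T)$ for every $p\geq 1$ because the coefficients are bounded and $\underline{\sigma}^2>0$. Applying the $G$-It\^{o} product rule to $\Gamma_t\hat{Y}_t$ cancels the $\hat{Y},\hat{Z}$-terms, and since the remaining stochastic integral is a symmetric $G$-martingale (hence has vanishing conditional $G$-expectation) one obtains
\[
\Gamma_t\hat{Y}_t=\hat{\mathbb{E}}_t\Big[\Gamma_T\hat{\xi}+\int_t^T\Gamma_sd\hat{I}_s-\int_t^T\Gamma_sdK^1_s+\int_t^T\Gamma_sdK^2_s\Big].
\]

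Inside the bracket, $\Gamma_T\hat{\xi}\geq 0$, $\int_t^T\Gamma_sd\hat{I}_s\geq 0$ and $-\int_t^T\Gamma_sdK^1_s\geq 0$, so the whole difficulty is the term $\int_t^T\Gamma_sdK^2_s\leq 0$: it cannot be discarded, and separating it off through the sub-additivity of $\hat{\mathbb{E}}_t$ produces the wrong sign. The fact to exploit is that $K^2$ is a $G$-martingale, so $\hat{\mathbb{E}}_t[K^2_T-K^2_t]=0$—its downward push is invisible to the upper expectation—and this must be transported through the positive weight $\Gamma$; concretely I would integrate by parts (using $\langle\Gamma,K^2\rangle\equiv 0$, valid since $K^2$ has finite variation) to replace $\int_t^T\Gamma_sdK^2_s$ by $\Gamma_TK^2_T-\Gamma_tK^2_t-\int_t^TK^2_sd\Gamma_s$ and then control the residual terms by the tower property together with the Burkholder--Davis--Gundy estimate of Proposition \ref{the1.3} and the maximal inequality of Theorem \ref{the1.2}; alternatively one may invoke the representation of $Y^2$ as a conditional essential supremum of classical BSDE solutions over the family of measures representing $\hat{\mathbb{E}}$, under each of which ordinary comparison applies term by term. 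This is the step I expect to be the real obstacle; the linearisation, the change of weight, and the integrability bookkeeping are routine. Once $\hat{Y}_t\geq 0$ is established for each $t$, path-continuity of $\hat{Y}$ gives $Y^1_t\geq Y^2_t$ quasi-surely for all $t$, which is the assertion.
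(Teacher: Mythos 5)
This theorem is imported verbatim from \cite{HJPS2}; the paper contains no proof of it, so there is no in-text argument to compare yours against. Judged on its own, your proposal has two genuine gaps. The first is the one you identify and then leave unresolved: the term $\int_t^T\Gamma_s\,dK^2_s$. Neither of your suggested repairs is carried out, and the second one (representing $Y^2$ as an essential supremum of classical BSDE solutions over the representing measures) is essentially circular, since such representations are normally derived from the comparison theorem. The missing ingredient is Lemma 3.4 of \cite{HJPS1} --- which this very paper invokes in the proofs of Proposition \ref{a prior estimate} and Theorem \ref{main} --- stating that $\int_0^\cdot\eta_s\,dB_s+\int_0^\cdot\zeta_s\,dK_s$ is again a $G$-martingale whenever $\zeta\geq 0$ is an admissible integrand and $K$ is a non-increasing $G$-martingale. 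Applied with $\zeta=\Gamma>0$ and $K=K^2$ it gives $\hat{\mathbb{E}}_t\big[-\int_t^TN_s\,dB_s+\int_t^T\Gamma_s\,dK^2_s\big]=0$, and then \emph{monotonicity} of $\hat{\mathbb{E}}_t$ (not sub-additivity, which, as you observe, produces the wrong sign) finishes the argument:
\[
\Gamma_t\hat{Y}_t=\hat{\mathbb{E}}_t\Big[\Big(\Gamma_T\hat{\xi}+\int_t^T\Gamma_s\,d\hat{I}_s-\int_t^T\Gamma_s\,dK^1_s\Big)-\int_t^TN_s\,dB_s+\int_t^T\Gamma_s\,dK^2_s\Big]\geq\hat{\mathbb{E}}_t\Big[-\int_t^TN_s\,dB_s+\int_t^T\Gamma_s\,dK^2_s\Big]=0,
\]
since the quantity in round brackets is nonnegative.

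The second gap is precisely the step you dismiss as routine. In the $G$-framework the difference-quotient coefficients $a_s,b_s,c_s,d_s$ are bounded by $\kappa$, but under the mere Lipschitz assumption (H2) they involve indicators of sets such as $\{Y^1_s\neq Y^2_s\}$, and there is no reason for them to belong to $M_G^2(0,T)$ or $H_G^2(0,T)$, which are completions of step processes with $L_{ip}$ coefficients and do not contain all bounded adapted processes. Consequently the linear $G$-SDE defining $\Gamma$, the moment bounds on $\Gamma^{\pm1}$, and the $G$-It\^{o} product rule for $\Gamma_t\hat{Y}_t$ are all unjustified as stated; this admissibility obstruction is exactly why the Girsanov-type transformation in \cite{HJPS2} is established only under restrictive hypotheses on the coefficients and why the comparison theorem there is proved by an approximation and stability argument rather than by linearisation. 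As written, the proposal therefore does not constitute a proof.
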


In contrast to classical BSDEs, the  inclusion of the additional non-increasing $G$-martingale $K$ in $G$-BSDEs  introduces model uncertainty and complicates the analysis.   
\cite{song2019properties} demonstrated that the non-increasing $G$-martingale cannot be expressed in the form ${\int_0^t \eta_sdt}$ or ${\int_0^t \gamma_sd\langle B\rangle_s}$, where $\eta,\gamma\in M_G^1(0,T)$. Specifically, the author established the following result.
\begin{theorem}[\cite{song2019properties}]\label{the1.6}
	Assume that for $t\in[0,T]$, $$\int_0^t \zeta_s dB_s+\int_0^t \eta_sds+K_t=L_t,$$ where $\zeta\in H_G^1(0,T)$, $\eta\in M_G^1(0,T)$, and $K,L$ are non-increasing $G$-martingales. Then we have $$\int_0^t \zeta_sdB_s=0,\quad \int_0^t \eta_sds=0\quad\text{and} \quad K_t=L_t.$$
\end{theorem}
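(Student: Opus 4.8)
The plan is to peel off the three pieces one at a time, from the least to the most singular, using that $K$ and $L$ are continuous and of finite variation.

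First I would dispose of the stochastic integral. Rewrite the identity as $\int_0^t\zeta_s\,dB_s=L_t-K_t-\int_0^t\eta_s\,ds$ for all $t\in[0,T]$, q.s., and fix $P\in\mathcal{P}$. Under $P$ the right-hand side is a continuous process of finite variation — $L,K$ are monotone and $\int_0^\cdot\eta_s\,ds$ is absolutely continuous since $\eta\in M_G^1(0,T)$ — so it has zero quadratic variation $P$-a.s., whereas $\int_0^\cdot\zeta_s\,dB_s$ is a continuous local $P$-martingale with quadratic variation $\int_0^\cdot\zeta_s^2\,d\langle B\rangle_s$. Hence $\int_0^t\zeta_s^2\,d\langle B\rangle_s=0$ for every $t$, $P$-a.s., and since $d\langle B\rangle_s\geq\underline{\sigma}^2\,ds$ with $\underline{\sigma}^2>0$ this forces $\zeta=0$, $ds\times P$-a.e. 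As $P\in\mathcal{P}$ is arbitrary, $\hat{\mathbb{E}}[\int_0^T|\zeta_s|^2\,ds]=0$, so $\int_0^\cdot\zeta_s\,dB_s\equiv0$ q.s.\ and the identity collapses to $L_t-K_t=\int_0^t\eta_s\,ds$.

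It then remains to show that the continuous finite-variation process $N_t:=L_t-K_t=\int_0^t\eta_s\,ds$ is identically zero; granting this, $K=L$ and, reading the identity backwards, $\eta=0$ in $M_G^1(0,T)$. My approach would be to prove that $N$ is a $P$-martingale for every $P\in\mathcal{P}$: a continuous finite-variation local martingale is constant, and $N_0=0$, so this yields $N\equiv0$ $P$-a.s.\ for every $P$, that is, q.s. For the martingale property I would first record the bounds that come for free. Fix $s\leq t$. Since $L$ and $K$ are $G$-martingales, $\hat{\mathbb{E}}_s[L_t-L_s]=\hat{\mathbb{E}}_s[K_t-K_s]=0$; since they are non-increasing, $L_t-L_s\leq0$ and $K_t-K_s\leq0$ q.s. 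Applying sub-additivity of $\hat{\mathbb{E}}_s$ to $N_t-N_s=(L_t-L_s)-(K_t-K_s)$ gives $0\leq\hat{\mathbb{E}}_s[N_t-N_s]\leq\hat{\mathbb{E}}_s[K_s-K_t]$ and, symmetrically, $0\leq\hat{\mathbb{E}}_s[-(N_t-N_s)]\leq\hat{\mathbb{E}}_s[L_s-L_t]$. To upgrade these to the exact identity $E_P[N_t-N_s\mid\mathcal{F}_s]=0$ $P$-a.s.\ for every $P$, one passes to the representation $\hat{\mathbb{E}}=\sup_{P\in\mathcal{P}}E_P$ and uses the fine structure of $\mathcal{P}$ — each $P$ carries a volatility $\sigma^P\in[\underline{\sigma},\bar{\sigma}]$, and the $G$-martingale property of $K$ and $L$ says that the supremum, over the measures agreeing with a given $P$ up to time $s$, of the corresponding conditional expectations is attained — together with a Lebesgue-differentiation argument as $t\downarrow s$ that exploits the absolute continuity $N_t-N_s=\int_s^t\eta_u\,du$. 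This structural input is exactly Song's uniqueness theorem for $G$-martingales of finite variation, which one may quote directly from \cite{S16}.

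The routine part is the first step. The hard part is the second: because $\hat{\mathbb{E}}$ is only sub-additive, the difference of two $G$-martingales is in general neither a sub- nor a supermartingale, so no soft manipulation alone makes $N$ a martingale — one genuinely needs the structural fact (the content of \cite{S16}) that volatility uncertainty cannot produce a non-trivial $dt$-absolutely-continuous non-increasing $G$-martingale. Once $N\equiv0$ is established, the conclusions $K=L$ and $\eta=0$ are immediate.
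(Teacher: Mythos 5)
This theorem is quoted by the paper from Song's work \cite{S16}; the paper supplies no proof of its own, so your proposal has to stand as a self-contained argument. Your first step is fine and is indeed the routine half: under each representing $P$ the identity exhibits $\int_0^\cdot\zeta_s\,dB_s$ as a continuous local $P$-martingale equal to a finite-variation process, hence constant, and the nondegeneracy $d\langle B\rangle_s\geq\underline{\sigma}^2\,ds$ with $\underline{\sigma}^2>0$ then forces $\zeta=0$ in $H_G^1(0,T)$, collapsing the identity to $L_t-K_t=\int_0^t\eta_s\,ds$.

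The second step, however, is circular. You reduce everything to showing $N_t:=L_t-K_t=\int_0^t\eta_s\,ds\equiv 0$, correctly note that sub-additivity of $\hat{\mathbb{E}}_s$ only yields the one-sided bounds $0\leq\hat{\mathbb{E}}_s[N_t-N_s]\leq\hat{\mathbb{E}}_s[K_s-K_t]$ (and symmetrically), and then declare that the upgrade to $E_P[N_t-N_s\mid\mathcal{F}_s]=0$ for every $P$ ``is exactly Song's uniqueness theorem \dots which one may quote directly from \cite{S16}.'' But the statement you are asked to prove \emph{is} that theorem of \cite{S16}; invoking it at the decisive moment leaves the essential content unproved. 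Nor would the more modest, separately quotable fact that a single absolutely continuous non-increasing $G$-martingale must vanish close the gap: $N$ is a \emph{difference} of two non-increasing $G$-martingales and is itself neither monotone nor a $G$-martingale, so that fact does not apply to $N$. The genuine argument requires the structure theory of non-increasing $G$-martingales (their representation via $\int_0^t\gamma_s\,d\langle B\rangle_s-\int_0^t 2G(\gamma_s)\,ds$ on a dense class, and a delicate analysis showing the $d\langle B\rangle$- and $ds$-parts cannot cancel nontrivially), none of which your proposal supplies. In short: you have correctly located the difficulty, and resolved only the part that was never in doubt.
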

	We call the following process $u$ a generalized $G$-It\^{o} process:
	\begin{displaymath}
		u_t=u_0+\int_0^t \eta_sds+\int_0^t \zeta_s dB_s+K_t,
	\end{displaymath}
	where $\eta\in M_G^1(0,T)$, $\zeta\in H_G^1(0,T)$, and $K$ is a non-increasing $G$-martingale such that $K_0=0$. By Theorem \ref{the1.6},  the decomposition of the generalized $G$-It\^{o} process is unique.

\subsection{Reflected $G$-BSDEs with a single obstacle}

Now we review the reflected $G$-BSDE with a lower obstacle studied in \cite{li2018reflected}. Their parameters consist of a terminal value $\xi$, generators $f,g$, and an obstacle $L$, where $L$ satisfies the following condition:
\begin{description}
	%	\item[(H1)] for any $y,z$, $f(\cdot,\cdot,y,z)$, $g(\cdot,\cdot,y,z)\in M_G^\beta(0,T)$;
	%	\item[(H2)] $|f(t,\omega,y,z)-f(t,\omega,y',z')|+|g(t,\omega,y,z)-g(t,\omega,y',z')|\leq L(|y-y'|+|z-z'|)$ for some $L>0$;
	%	\item[(H4)] $\xi\in L_G^\beta(\Omega_T)$ and $\xi\geq S_T$, $q.s.$;
	%	\item[(H4)] There exists a constant $c$ such that $\{S_t\}_{t\in[0,T]}\in S_G^\beta(0,T)$ and $S_t\leq c$, for each $t\in[0,T]$;
	\item[(H4)] $L\in S_G^\beta(0,T)$ is bounded from above by a generalized $G$-It\^{o} process $L'$ of the following form:
	\begin{displaymath}
		L'_t=L'_0+\int_0^t b'(s)ds+\int_0^t \sigma'(s)dB_s+K'_t,
	\end{displaymath}
	where $b'\in M_G^\beta(0,T)$, $\sigma'\in H_G^\beta(0,T)$, and $K'\in S_G^\beta(0,T)$ is a non-increasing $G$-martingale such that $K'_0=0$ and $\beta>2$. Additionally, $\xi\geq L_T$ q.s.
\end{description}
 A triple of processes $(Y,Z,A)$ for some $1<\alpha< \beta$, is called a solution of the reflected $G$-BSDE with a lower obstacle with parameters $(\xi,f,g,L)$, if
\begin{align}
	\label{eqn:RGBSDE}
	\begin{cases}
		Y_t=\xi+\int_t^T f(s,Y_s,Z_s)ds+\int_t^T g(s,Y_s,Z_s)d\langle B\rangle_s
		-\int_t^T Z_s dB_s+(A_T-A_t),\vspace{0.2cm}\\
		Y_t\geq L_t,\quad 0\leq t\leq T, \vspace{0.2cm}\\
		\big\{-\int_0^t (Y_s-L_s)dA_s\big\}_{t\in[0,T]} \textrm{ is a non-increasing $G$-martingale},
	\end{cases}
\end{align}
where $Y\in S_G^{\alpha}(0,T)$, $Z\in H_G^{\alpha}(0,T)$, and $A$ is a continuous non-decreasing process such that $A_0=0$ and $A\in S_G^\alpha(0,T)$. The following theorem provides the well-posedness  of the reflected $G$-BSDE \eqref{eqn:RGBSDE}.

% and $\xi\in L_G^\beta(\Omega_T)$ such that $\xi\geq L_T$, where
\begin{theorem}[\cite{li2018reflected}]\label{the1.15}
	Suppose that $\xi$, $f$, $g$ and $L$ satisfy \textsc{(H1)}--\textsc{(H4)} with $\beta>2$. Then the reflected $G$-BSDE \eqref{eqn:RGBSDE} has a unique solution $(Y,Z,A)$. Moreover, for any $2\leq \alpha<\beta$ we have $Y\in S^\alpha_G(0,T)$, $Z\in H_G^\alpha(0,T)$ and $A\in S_G^{\alpha}(0,T)$.
\end{theorem}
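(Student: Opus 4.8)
The plan is to obtain existence by a penalization scheme and uniqueness by a priori estimates, following the pattern of classical RBSDE theory but accounting for the extra non-increasing $G$-martingale. For existence, for each $m\in\mathbb{N}$ I would solve the $G$-BSDE
\[
Y_t^m=\xi+\int_t^T f(s,Y_s^m,Z_s^m)\,ds+\int_t^T g(s,Y_s^m,Z_s^m)\,d\langle B\rangle_s+m\int_t^T (Y_s^m-S_s)^-\,ds-\int_t^T Z_s^m\,dB_s-(K_T^m-K_t^m),
\]
whose solvability in $\mathfrak{S}_G^\alpha(0,T)$ is guaranteed by Theorem \ref{the1.4}, since the generator $f(s,y,z)+m(y-S_s)^-$ still satisfies (H1)--(H2) once (H3) is used to control $S$ (in particular $S_T\le\xi$). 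Then set $A_t^m:=m\int_0^t (Y_s^m-S_s)^-\,ds$.

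Second, I would derive estimates uniform in $m$. By the comparison theorem (Theorem \ref{the1.5}), enlarging $m$ enlarges the generator, so $Y^m$ is non-decreasing in $m$; an upper bound comes from comparing with the $G$-BSDE driven by the generalized $G$-It\^o process $S'$ dominating $S$, which yields $\hat{\mathbb{E}}[\sup_t|Y_t^m|^\alpha]\le C$ uniformly. Feeding this back, Theorem \ref{Esti-Z-GBSDE} gives a uniform bound on $\hat{\mathbb{E}}[(\int_0^T|Z_s^m|^2ds)^{\alpha/2}]$, and rearranging the equation together with Proposition \ref{the1.3} gives a uniform bound on $\hat{\mathbb{E}}[|A_T^m|^\alpha]$ (hence on $\hat{\mathbb{E}}[\sup_t|A_t^m|^\alpha]$, $A^m$ being non-decreasing). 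Monotone convergence then produces a limit $Y\in S_G^\alpha(0,T)$, and a key lemma — obtained by testing $Y^m$ against the dominating process $S'$ and using the uniform bound on $A_T^m$ — shows $\hat{\mathbb{E}}[\sup_t((Y_t^m-S_t)^-)^2]\to 0$, so $Y\ge S$. Stability estimates for $G$-BSDEs then make $Z^m$ Cauchy in $H_G^\alpha(0,T)$ with limit $Z$, and $A^m-K^m$ converges in $S_G^\alpha(0,T)$ to a continuous non-decreasing process $A$ with $A_0=0$; passing to the limit in the penalized equation gives condition (b).

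The main obstacle is verifying the Skorohod-type martingale condition (c), that $-\int_0^\cdot(Y_s-S_s)\,dA_s$ is a non-increasing $G$-martingale. Since $A$ is assembled as the limit of the finite-variation parts (penalization term minus the $G$-martingale $K^m$), one cannot argue pathwise as classically. Instead I would pass to the limit in
\[
-\int_0^t (Y_s^m-S_s)\,dA_s^m=-\int_0^t (Y_s^m-S_s)\,d(A_s^m-K_s^m)-\int_0^t (Y_s^m-S_s)\,dK_s^m,
\]
where the left side is non-positive and its $\hat{\mathbb{E}}$ tends to $0$ by the rate of decay of $(Y^m-S)^-$, while the convergences above identify the limit of the right side; this forces the limiting decomposition to carry the martingale property, and here one invokes Theorem \ref{the1.6} (uniqueness of the decomposition of generalized $G$-It\^o processes) to separate cleanly the absolutely continuous, stochastic-integral and non-increasing-martingale components.

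Finally, for uniqueness, given two solutions $(Y^1,Z^1,A^1)$ and $(Y^2,Z^2,A^2)$ I would apply the $G$-It\^o formula to $|Y_t^1-Y_t^2|^2$ (or the $S_G^\alpha$ a priori estimates in the spirit of Theorem \ref{Esti-Z-GBSDE}), using condition (c) for each solution to control the cross term: writing $Y^i-S\ge0$ and decomposing $\int(Y^1-Y^2)\,d(A^1-A^2)=\int(Y^1-S)\,dA^1+\int(Y^2-S)\,dA^2-\int(Y^1-S)\,dA^2-\int(Y^2-S)\,dA^1$, each summand has non-positive $G$-expectation, which closes the Gronwall-type inequality and gives $Y^1=Y^2$; then (b) and Theorem \ref{the1.6} force $Z^1=Z^2$ and $A^1=A^2$. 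The integrability improvement $Y\in S_G^\alpha$, $Z\in H_G^\alpha$, $A\in S_G^\alpha$ for every $2\le\alpha<\beta$ is inherited from the uniform-in-$m$ estimates, which hold at each such $\alpha$.
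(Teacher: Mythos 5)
First, note that the paper does not prove Theorem \ref{the1.15}: it is imported verbatim from \cite{LPSH}, and the present paper only uses it. Your outline (penalization $f+m(y-S)^-$, monotonicity and uniform bounds by comparison with a supersolution built from the dominating process $S'$, the rate of decay of $(Y^m-S)^-$, Cauchy estimates for $Z^m$ and $A^m-K^m$, passage to the limit in the martingale condition, and uniqueness via a priori estimates) is exactly the strategy of \cite{LPSH} as summarized in the introduction here, so at the level of architecture your reconstruction is faithful.

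There is, however, one concretely wrong step in your uniqueness argument. You decompose $\int(Y^1-Y^2)\,d(A^1-A^2)$ into four terms and assert that ``each summand has non-positive $G$-expectation.'' The two cross terms $-\int(Y^1-S)\,dA^2$ and $-\int(Y^2-S)\,dA^1$ are indeed non-positive pathwise. But $\int_0^T(Y^i-S)\,dA^i\ge 0$ pathwise, and condition (c) only gives $\hat{\mathbb{E}}\bigl[-\int_0^T(Y^i-S)\,dA^i\bigr]=0$; by sublinearity this does \emph{not} imply $\hat{\mathbb{E}}\bigl[\int_0^T(Y^i-S)\,dA^i\bigr]\le 0$ (that expectation is generally strictly positive, since a non-increasing $G$-martingale can decrease strictly under non-maximizing measures). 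Consequently you cannot close the Gronwall inequality by estimating $\hat{\mathbb{E}}$ of the sum by the sum of the $\hat{\mathbb{E}}$'s. The correct mechanism, which is also what the present paper does in the proof of Proposition \ref{a prior estimate}, is to keep the terms $-\int_t^T \alpha H_s^{\alpha/2-1}(Y^i_s-S_s)\,dA^i_s$ (with their minus sign) grouped together with the stochastic integral $-\int_t^T\alpha H_s^{\alpha/2-1}\hat Y_s\hat Z_s\,dB_s$ into a single process $M$, invoke Lemma 3.4 of \cite{HJPS1} (integration of a non-negative weight against a non-increasing $G$-martingale, plus a symmetric martingale, yields a $G$-martingale) to conclude that $M$ is a $G$-martingale, and then use $\hat{\mathbb{E}}_t[X+(M_T-M_t)]\le\hat{\mathbb{E}}_t[X]$. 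A similar remark applies to your verification of condition (c) for the limit: Theorem \ref{the1.6} is the tool for the uniqueness of $(Z,A)$ given $Y$, whereas the martingale property of $-\int_0^\cdot(Y_s-S_s)\,dA_s$ is obtained by exhibiting it as an $L^1$-limit of non-increasing $G$-martingales of the form $\int_0^\cdot(Y^m_s-S_s)^+\,dK^m_s$ plus terms of order $m\sup_t|(Y^m_t-S_t)^-|^2\to 0$, using closedness of the class of $G$-martingales under such limits.
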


The following theorem provides the comparison theorem  for the reflected $G$-BSDE \eqref{eqn:RGBSDE}.
\begin{theorem}[\cite{li2018reflected}]\label{the1.16}
Suppose $\xi^i$, $L^i$, $f^i$ and $g^i$ for $i=1,2$ satisfy \textsc{(H1)}--\textsc{(H4)} with $\beta>2$. Furthermore, assume the following:
	\begin{enumerate}[label=(\roman*)]
		\item $\xi^1\leq \xi^2$, $q.s.$;
		\item $f^1(t,y,z)\leq f^2(t,y,z)$ and $g^1(t,y,z)\leq g^2(t,y,z)$, $\forall (y,z)\in\mathbb{R}^2$;
		\item $L_t^1\leq L^2_t$, $0\leq t\leq T$, q.s..
	\end{enumerate}
	Let $(Y^i,Z^i,A^i)$ be the solution of the reflected $G$-BSDE \eqref{eqn:RGBSDE} with parameters $(\xi^i,f^i,g^i,L^i)$ for $i=1,2$. Then
$Y_t^1\leq Y^2_t$ for $0\leq t\leq T$ q.s.

\end{theorem}

\section{Well-posedness of doubly reflected $G$-BSDEs}
\label{sec:Well-posedness}
In this section, we consider doubly reflected $G$-BSDEs and establish their well-posedness. Specifically, in Subsection \ref{sec:DRGBSDE}, we first define their solutions and present our first main result in Theorem \ref{main}; in Subsection \ref{sec:Preliminary_analysis}, we conduct preliminary analysis; Subsection \ref{sec:mainproof} is dedicated to the Proof of Theorem \ref{main}.

\subsection{Doubly reflected $G$-BSDEs}
\label{sec:DRGBSDE}
A triple of processes $(Y,Z,A)$, with $Y, A\in S_G^\alpha(0,T)$ and $Z\in H_G^\alpha(0,T)$ for some $2\leq\alpha<\beta$, is called a solution to the doubly reflected $G$-BSDE with the parameters $(\xi, f, g, L, U)$,  if 
\begin{align}
	\label{eqn:DRGBSDE}
	\begin{cases}
		Y_t=\xi+\int_t^T f(s,Y_s,Z_s)ds+\int_t^T g(s,Y_s,Z_s)d\langle B\rangle_s-\int_t^T Z_s dB_s+(A_T-A_t),\vspace{0.2cm}\\
		L_t\leq Y_t\leq U_t,\quad 0\leq t\leq T, \vspace{0.2cm}\\
		(Y, A) \textrm{ satisfies the } \textmd{ASC}_{\alpha}.
	\end{cases}
\end{align}
A pair of processes $(Y,  A)$ with $Y, A\in S_G^\alpha(0,T)$ is said to satisfy the  $\textmd{ASC}_{\alpha}$, if  there exist non-decreasing processes $\{A^{n,+}\}_{n\in\mathbb{N}}$, $\{A^{n,-}\}_{n\in\mathbb{N}}$, and  non-increasing $G$-martingales $\{K^n\}_{n\in\mathbb{N}}$, such that
\begin{itemize}
	\item $\widehat{\mathbb{E}}\Big[|A_T^{n,+}|^\alpha+|A_T^{n,-}|^\alpha+|K^n_T|^\alpha\Big]\leq C$, where $C$ is  independent of $n$;
	\item $\widehat{\mathbb{E}}\sup\limits_{t\in[0,T]}\Big|A_t-(A_t^{n,+}-A_t^{n,-}-K_t^n)\Big|^\alpha\rightarrow 0$, as $n\rightarrow\infty$;
	\item  $\lim\limits_{n\rightarrow\infty}\widehat{\mathbb{E}}\left|\int_0^T (Y_s-L_s)d A_s^{n,+}\right|^{\alpha/2}=0$;
	\item  $\lim\limits_{n\rightarrow\infty}\widehat{\mathbb{E}}\left|\int_0^T (U_s-Y_s)d A_s^{n,-}\right|^{\alpha/2}=0$.
\end{itemize}
We call $\{A^{n,+}\}_{n\in\mathbb{N}}$, $\{A^{n,-}\}_{n\in\mathbb{N}}$, and $\{K^n\}_{n\in\mathbb{N}}$ the approximate sequences for $(Y,A)$ with order $\alpha$ w.r.t. the lower obstacle $L$ and the upper obstacle $U$. 

Consider the parameters of the doubly reflected $G$-BSDE \eqref{eqn:DRGBSDE}, namely the terminal value $\xi$, the generators $f,g$, and the obstacles $L,U$, satisfy (H2), (H3) and the following assumptions:
\begin{description}
	\item[(A1)] There exists some $\beta>2$, such that for any $y,z\in \mathbb{R}$, $f(\cdot,\cdot,y,z)$, $g(\cdot,\cdot,y,z)\in S_G^\beta(0,T)$;
	\item[(A2)] $L$, $U\in S_G^\beta(0,T)$. There exists some $I\in S_G^\beta(0,T)$ satisfying the following representation:
	\begin{displaymath}
		I_t=I_0+A^{I,-}_t-A_t^{I,+}+\int_0^t \sigma^I(s)dB_s.
	\end{displaymath}
	Here, $A^{I,+}$, $A^{I,-}\in S_G^\beta(0,T)$ are two non-decreasing processes such that $A^{I,+}_0=A^{I,-}_0=0$; $\sigma^I\in S_G^\beta(0,T)$ satisfies $L_t\leq I_t\leq U_t$; $U+A^{I,+}$ is a generalized $G$-It\^{o} process evolves according to
	\begin{equation}\label{eqn:A2}
		U_t+A^{I,+}_t=U_0+\int_0^t b(s)ds+\int_0^t \sigma(s)dB_s+K^u_t,
	\end{equation}
	where $b,\sigma\in S_G^\beta(0,T)$, and $K^u\in S_G^\beta(0,T)$ is a non-increasing $G$-martingale such that $K^u_0=0$. Additionally, $L_T\leq \xi\leq U_T$, q.s..
\end{description}

\begin{remark}
In comparison to \cite{li2021backward}, their conditions are the same as ours except Assumption (A3) therein which corresponds to our Assumption (A2), while ours is weaker. Specifically, their (A3) says that the upper obstacle is a generalized $G$-It\^{o} process of the following form:
	\begin{displaymath}
		U_t=U_0+\int_0^t b^U(s)ds+\int_0^t \sigma^U(s)dB_s+K^U_t,
	\end{displaymath}
	where $b^U,\sigma^U\in S_G^\beta(0,T)$, and $K^U\in S_G^\beta(0,T)$ is a non-increasing $G$-martingale. Setting $$I=U,\qquad \sigma^I=\sigma^U,\qquad A^{I,-}_t=\int_0^t (b^U(s))^+ds,\quad\text{and}\quad A^{I,+}_t=\int_0^t (b^U(s))^-ds-K^U_t,$$
 their $(L,U)$ pair clearly satisfies (A2) of this paper.
\end{remark}

Theorem \ref{main} below is our first main result. It firstly estalishes the well-posedness of the doubly reflected $G$-BSDE \eqref{eqn:DRGBSDE} using the weakest known regularity conditions. Secondly, it establishes that the first component of the solution to \eqref{eqn:DRGBSDE} can be approximated by a monotone sequence of processes, which are the solutions to a family of penalized single reflected $G$-BSDEs. This construction will play a fundamental role to establishing the connection between doubly reflected $G$-BSDEs and fully nonlinear PDEs with double obstacles. The proof of Theorem \ref{main} is provided in Subsection \ref{sec:mainproof}.
\begin{theorem}\label{main}
	Assuming that $\xi$, $f$, $g$, $L$ and $U$ satisfy Assumptions (H2)-(H3) and (A1)-(A2), the following properties hold  for any $2\leq \alpha<\beta$:
	\begin{enumerate}[label=(\alph*)]
\item The doubly reflected $G$-BSDE \eqref{eqn:DRGBSDE} has a unique solution $(Y,Z,A)$, such that $Y\in S^\alpha_G(0,T)$, $Z\in H_G^\alpha(0,T)$ and $A\in S_G^{\alpha}(0,T)$.

\item This $Y$ can be approximated by a monotone decreasing sequence of processes $\bar{Y}^n$ (i.e. $\bar{Y}^{n_1}_t\geq \bar{Y}^{n_2}_t$  for any $n_1\leq n_2$) in the sense that 
	\begin{align*}
	\lim_{n\rightarrow \infty}\widehat{\mathbb{E}}\sup_{t\in[0,T]}|Y_t-\bar{Y}^n_t|^\alpha=0, 
\end{align*}
	where $(\bar{Y}^n,\bar{Z}^n,\bar{A}^n)$ for each $n\in\mathbb{N}$ is the solution to the following reflected $G$-BSDE:
\begin{align}
	\label{barY^n}
	\begin{cases}
		&\bar{Y}^n_t=\xi+\int_t^T f(s,\bar{Y}^n_s,\bar{Z}^n_s)ds+\int_t^T g(s,\bar{Y}^n_s,\bar{Z}^n_s)d\langle B\rangle_s-n\int_t^T(\bar{Y}^n_s-U_s)^+ds\\
		&\hspace{1cm}-\int_t^T \bar{Z}^n_sdB_s+(\bar{A}^n_T-\bar{A}^n_t),\vspace{0.2cm}\\
		&\bar{Y}^n_t\geq L_t,\quad 0\leq t\leq T, \vspace{0.2cm}\\
		&\big\{-\int_0^t (\bar{Y}^n_s-L_s)d\bar{A}^n_s\big\}_{t\in[0,T]} \textrm{ is a non-increasing $G$-martingale}.
	\end{cases}
\end{align}

\item The remaining terms $(Z,A)$ can be constructed by the penalized reflected $G$-BSDEs \eqref{barY^n}, in the way that
\begin{displaymath}
	\lim_{n\rightarrow\infty}\widehat{\mathbb{E}}\left(\int_0^T |\bar{Z}^n_s-Z_s|^2ds\right)^{\frac{\alpha}{2}}=0\quad\text{and}\quad \lim_{n\rightarrow\infty}\widehat{\mathbb{E}}\sup_{t\in[0,T]}|\widetilde{A}^n_t-A_t|^\alpha=0,
\end{displaymath}
where $\widetilde{A}^n_t=\bar{A}^n_t-\int_0^t n(\bar{Y}^n_s-U_s)^+ds$. 
	\end{enumerate}

\end{theorem}

\subsection{Preliminary analysis}
\label{sec:Preliminary_analysis}

In this subsection, we conduct preliminary analysis in order to prove Theorem \ref{main}.  Firstly, we aim to establish the uniform boundedness of $\bar{Y}^n$ under the norm $\|\cdot\|_{S_G^\alpha}$. Note that by Theorem \ref{the1.15}, the reflected $G$-BSDE \eqref{barY^n} admits a unique solution $(\bar{Y}^n,\bar{Z}^n,\bar{A}^n)$ for any $n\in\mathbb{N}$, satisfying $Y\in S_G^{\alpha}(0,T)$ and $Z\in H_G^{\alpha}(0,T)$ for $1<\alpha<\beta$, and $K$ is a non-increasing $G$-martingale such that $K_0=0$ and $K_T\in L_G^{\alpha}(\Omega_T)$. Then, we demonstrate that $(\bar{Y}^n-U)^+$ converges to $0$ with an explicit rate of $\frac{1}{n}$ and subsequently derive uniform estimates for $\bar{Z}^n$ and $\bar{A}^n$ under the norms $\|\cdot\|_{H_G^\alpha}$ and $\|\cdot\|_{L_G^\alpha}$, respectively.
However, given that $\bar{A}^n$ is not a $G$-martingale, we encounter some difficulties. To address this challenge, for each fixed $n\in\mathbb{N}$, we approximate the solution to \eqref{barY^n} by the solutions to the following family of $G$-BSDEs parameterized by $m\in\mathbb{N}$:
\begin{equation}\label{Y^{n,m}}\begin{split}
		Y^{n,m}_t=&\xi+\int_t^T f(s,Y^{n,m}_s,Z^{n,m}_s)ds+\int_t^T g(s,Y^{n,m}_s,Z^{n,m}_s)d\langle B\rangle_s-\int_t^T Z_s^{n,m}dB_s\\
		&+\int_t^T m(Y_s^{n,m}-L_s)^-ds-\int_t^T n(Y_s^{n,m}-U_s)^+ds-(K_T^{n,m}-K_t^{n,m}).
\end{split}\end{equation}
Set 
\begin{align}\label{anm+-}
A^{n,m,+}_t=\int_0^t m(Y_s^{n,m}-L_s)^-ds \quad\text{and}\quad A^{n,m,-}_t=\int_0^t n(Y_s^{n,m}-U_s)^+ds.
\end{align}
Clearly, $A^{n,m,+}$ and $A^{n,m,-}$ are non-decreasing processes and Equation \eqref{Y^{n,m}} can be rewritten as:
\begin{equation}
	\begin{split}
		Y^{n,m}_t=&\xi+\int_t^T f(s,Y^{n,m}_s,Z^{n,m}_s)ds+\int_t^T g(s,Y^{n,m}_s,Z^{n,m}_s)d\langle B\rangle_s-\int_t^T Z_s^{n,m}dB_s\\ &+(A^{n,m,+}_T-A^{n,m,+}_t)-(A^{n,m,-}_T-A^{n,m,-}_t)-(K_T^{n,m}-K_t^{n,m}).
\end{split}\end{equation}

In the following, we show that  under Assumptions (H2)-(H3) and (A1)-(A2), the sequence $(Y^{n,m},Z^{n,m}, A^{n,m,+}-K^{n,m})$ converges to $(\bar{Y}^n,\bar{Z}^n,\bar{A}^n)$ as $m$ goes to infinity. The initial step involves establishing the uniform boundedness of $Y^{n,m}$  under the norm $\|\cdot\|_{S_G^\alpha}$. Note that since the upper obstacle here is no longer a generalized $G$-It\^{o} process, conventional approaches found in existing literature on reflected G-BSDEs are inapplicable. 
Our technical proofs commence with the following lemma, wherein we utilize a weak condition that is fulfilled by the conditions presented in subsequent proofs.
\begin{lemma}\label{est-Y}
Assuming that $\xi$, $f$, $g$, $L$ and $U$ satisfy Assumptions (H1)-(H3) and the (A2') below (which is essentially (A2) but without the requirements on $U+A^{I,+}$):
\begin{description}
	\item[(A2')] $L$, $U\in S_G^\beta(0,T)$. There exists some $I\in S_G^\beta(0,T)$ satisfying the following representation
	\begin{displaymath}
		I_t=I_0+A^{I,-}_t-A_t^{I,+}+\int_0^t \sigma^I(s)dB_s,
	\end{displaymath}
	where $A^{I,+}$, $A^{I,-}\in S_G^\beta(0,T)$ are two non-decreasing processes with $A^{I,+}_0=A^{I,-}_0=0$ and $\sigma^I\in S_G^\beta(0,T)$ such that $L_t\leq I_t\leq U_t$. Additionally, $L_T\leq \xi\leq U_T$, q.s.
\end{description}
Then there exists a constant $C$ independent of $n,m$, such that for $2\leq \alpha<\beta$, 
	\begin{displaymath}
		\widehat{\mathbb{E}}\sup_{t\in[0,T]}|Y_t^{n,m}|^\alpha\leq C.
	\end{displaymath}
\end{lemma}

\begin{proof}
	Set $Y^*_t=I_t$ and $Z^*_t=\sigma^I_t$. It is easy to check that
	\begin{equation}\label{Y^*}\begin{split}
			Y^*_t=&I_T-\int_t^T Z^*_sd B_s+(A^{I,+}_T-A^{I,+}_t)-(A^{I,-}_T-A^{I,-}_t)\\
			=&I_T+\int_t^T f(s,Y^*_s,Z^*_s)ds+\int_t^T g(s,Y^*_s,Z^*_s)d\langle B\rangle_s\\
            &-\int_t^T Z^*_s dB_s+(A^{*,+}_T-A^{*,+}_t)-(A^{*,-}_T-A^{*,-}_t),
	\end{split}\end{equation}
	where 
 \begin{align}
     &A^{*,+}_t=A^{I,+}_t+\int_0^t f^-(s,Y^*_s,Z^*_s)ds+\int_0^t g^-(s,Y^*_s,Z^*_s)d\langle B\rangle_s,\label{eqn:Astarplus}\\
     & A^{*,-}_t=A^{I,-}_t+\int_0^t f^+(s,Y^*_s,Z^*_s)ds+\int_0^t g^+(s,Y^*_s,Z^*_s)d\langle B\rangle_s.\label{eqn:Astarmius}
 \end{align} 
 Clearly, $A^{*,+}, A^{*,-}\in S_G^\beta(0,T)$ are non-decreasing processes. Consider the following two $G$-BSDEs:
	\begin{equation}\label{Y^+}\begin{split}
	    Y_t^+=&U_T+\int_t^T f(s,Y_s^+,Z_s^+)ds+\int_t^T g(s,Y_s^+,Z_s^+)d\langle B\rangle_s+(A^{*,+}_T-A^{*,+}_t)\\
     &-\int_t^T Z_s^+ dB_s-(K^+_T-K^+_t),
	\end{split}
	\end{equation}
    \begin{equation}\label{Y^-}\begin{split}
		 Y_t^-=&L_T+\int_t^T f(s,Y_s^-,Z_s^-)ds+\int_t^T g(s,Y_s^-,Z_s^-)d\langle B\rangle_s-(A^{*,-}_T-A^{*,-}_t)\\
   &-\int_t^T Z_s^- dB_s-(K^-_T-K^-_t).
	\end{split}\end{equation}
	By Theorem \ref{the1.5}, we have $Y_t^-\leq Y_t^*\leq Y_t^+$ for any $t\in[0,T]$, which implies that $$Y_t^+\geq I_t\geq L_t\qquad \text{and} \qquad Y_t^-\leq I_t\leq U_t.$$ Therefore, we may add the terms $+\int_t^T m(Y_s^+-L_s)^-ds$ and $-\int_t^T n(Y_s^--U_s)^+ds$ to Equations \eqref{Y^+} and \eqref{Y^-}, respectively. By Theorem \ref{the1.5} again, we have $Y_t^-\leq Y^{n,m}_t\leq Y^+_t$ for any $t\in[0,T]$ and $n,m\in\mathbb{N}$. By the estimates for $G$-BSDEs (see Theorem \ref{the1.4}), we have
	\begin{align*}
		&\hspace{-0.7cm}\widehat{\mathbb{E}}\sup_{t\in[0,T]}|Y_t^++A_t^{*,+}|^\alpha\\
		&\leq C\widehat{\mathbb{E}}\left[\sup_{t\in[0,T]}\widehat{\mathbb{E}}_t\left[|U_T+A_T^{*,+}|^\alpha+\int_t^T \Big(|f(s,0,0)|^\alpha+|g(s,0,0)|^\alpha+|A_s^{*,+}|^\alpha\Big)ds\right]\right],\\
		&\hspace{-0.7cm}\widehat{\mathbb{E}}\sup_{t\in[0,T]}|Y_t^--A_t^{*,-}|^\alpha\\
		&\leq C\widehat{\mathbb{E}}\left[\sup_{t\in[0,T]}\widehat{\mathbb{E}}_t\left[|L_T-A_T^{*,-}|^\alpha+\int_t^T \Big(|f(s,0,0)|^\alpha+|g(s,0,0)|^\alpha+|A_s^{*,-}|^\alpha\Big)ds\right]\right].
	\end{align*}
By Theorem \ref{the1.2} and H\"{o}lder's inequality, there exists a constant $C$ independent of $n,m$ such that 
   $$\widehat{\mathbb{E}}\sup_{t\in[0,T]}|Y_t^+|^\alpha\leq C \qquad\text{and}\qquad
   \widehat{\mathbb{E}}\sup_{t\in[0,T]}|Y_t^-|^\alpha\leq C.$$ Consequently, we have%there exists a constant $C$ independent of $n,m$ such that 
   $$\widehat{\mathbb{E}}\sup_{t\in[0,T]}|Y_t^{n,m}|^\alpha\leq C,$$ 
   where $C$ is a constant independent of $n,m$.
\end{proof}

The following lemma provides the explicit convergence rate of $(Y^{n,m}-U)^+$, which will be instrumental in deriving the convergence rate of $(\bar{Y}^n-U)^+$. The latter is challenging to obtain solely by considering the penalization sequence \eqref{barY^n}, as $\bar{A}^n$ does not exhibit the properties of a non-increasing $G$-martingale. To address this limitation, we introduce the penalization sequence with two parameters $n$ and $m$ in \eqref{Y^{n,m}}. Although Equation \eqref{eqn:A2} is not required in Lemma \ref{est-Y}, it becomes necessary starting from this point onward.
\begin{lemma}\label{Y^{n,m}-U}
Assuming that $\xi$, $f$, $g$, $L$ and $U$ satisfy Assumptions (H2)-(H3) and (A1)-(A2).	There exists a constant $C$ independent of $n,m$, such that for $2\leq \alpha<\beta$, 
	\begin{displaymath}
		\widehat{\mathbb{E}}\sup_{t\in[0,T]}\big|(Y_t^{n,m}-U_t)^+\big|^\alpha\leq \frac{C}{n^\alpha}.
	\end{displaymath}
\end{lemma}

\begin{proof}
	Consider the following $G$-BSDE:
	\begin{equation}\begin{split}\label{hatY^n}
			\widehat{Y}^n_t=&U_T+\int_t^T f(s,\widehat{Y}^n_s,\widehat{Z}^n_s)ds+\int_t^T g(s,\widehat{Y}^n_s,\widehat{Z}^n_s)d\langle B\rangle_s-\int_t^T \widehat{Z}^n_sd B_s\\ &-\int_t^T n(\widehat{Y}^n_s-U_s)^+ds+(A_T^{*,+}-A_t^{*,+})-(\widehat{K}^n_T-\widehat{K}^n_t),
	\end{split}\end{equation}
	where $A^{*,+}$ is defined in \eqref{eqn:Astarplus}. By Equation \eqref{Y^+} and Theorem \ref{the1.5}, we have $\widehat{Y}^n_t\leq Y^+_t$ for $n\in\mathbb{N}$. Noting that $Y^*_t=I_t\leq U_t$, we may add $-\int_t^T n(Y^*_s-U_s)^+ds$ to Equation \eqref{Y^*}. By Theorem \ref{the1.5}, we have $\widehat{Y}^n_t\geq Y^*_t$ and hence $\widehat{Y}^n_t\geq L_t$ for any $n\in\mathbb{N}$ and $t\in[0,T]$. Therefore, we may add $+\int_t^T m(\widehat{Y}^n_s-L_s)^-ds$ to Equation \eqref{hatY^n}. Applying Theorem \ref{the1.5} again yields $\widehat{Y}^n_t\geq Y^{n,m}_t$.  It suffices to prove that  there exists a constant $C$ independent of $n,m$, such that for any $2\leq \alpha<\beta$,
	\begin{displaymath}
		\widehat{\mathbb{E}}\sup_{t\in[0,T]}\big|(\widehat{Y}^n_t-U_t)^+\big|^\alpha\leq \frac{C}{n^\alpha}.
	\end{displaymath}
	Set $$\widetilde{Y}^n_t=\widehat{Y}^n_t+A^{*,+}_t,\quad \widetilde{\xi}=U_T+A_T^{*,+}\quad \text{and}\quad\widetilde{U}_t=U_t+A^{*,+}_t.$$ Equation \eqref{hatY^n} can be rewritten as
	\begin{align*}
		\widetilde{Y}^n_t=\widetilde{\xi}+\int_t^T \widetilde{f}(s,\widetilde{Y}^n_s,\widehat{Z}^n_s)ds+\int_t^T \widetilde{g}(s,\widetilde{Y}^n_s,\widehat{Z}^n_s)d\langle B\rangle_s-&\int_t^T n(\widetilde{Y}^n-\widetilde{U}_s)^+ds-\int_t^T \widehat{Z}^n_sdB_s\\&\hspace{2cm}-(\widehat{K}^n_T-\widehat{K}_t^n),
	\end{align*}
	where $$\widetilde{f}(s,y,z)=f(s,y-A^{*,+}_s,z)\qquad \text{and}\qquad\widetilde{g}(s,y,z)=g(s,y-A^{*,+}_s,z).$$ Given that $Y^*_t\leq \widehat{Y}^n_t\leq Y^+_t$ for any $n\in\mathbb{N}$, there exists a constant $C$ independent of $n$, such that $$\widehat{\mathbb{E}}\sup_{t\in[0,T]}|\widehat{Y}_t^{n}|^\alpha\leq C.$$
 Consequently, 
 $$\widehat{\mathbb{E}}\sup_{t\in[0,T]}|\widetilde{Y}_t^{n}|^\alpha\leq C,$$
 where $C$ is independent of $n$. 
 By Lemma 4.5 in \cite{li2017reflected}, we have%Since $(\widetilde{\xi},\widetilde{f},\widetilde{U})$ satisfy,
	\begin{displaymath}
		\widehat{\mathbb{E}}\sup_{t\in[0,T]}\big|(\widetilde{Y}^n_t-\widetilde{U}_t)^+\big|^\alpha\leq \frac{C}{n^\alpha},
	\end{displaymath}
	which yields the desired result.
\end{proof}

Next, we show that the sequences $A^{n,m,+}$, $A^{n,m,-}$, $K^{n,m}$ and $Z^{n,m}$ are uniformly bounded.
\begin{lemma}\label{A^{n,m,+},Z^{n,m},K^{n,m}}
Assuming that $\xi$, $f$, $g$, $L$, and $U$ satisfy Assumptions (H2)-(H3) and (A1)-(A2).	There exists a constant $C$ independent of $n,m$, such that for $2\leq \alpha<\beta$, 
	\begin{displaymath}
		\widehat{\mathbb{E}}|A_T^{n,m,+}|^\alpha\leq C,\quad	\widehat{\mathbb{E}}|A_T^{n,m,-}|^\alpha\leq C,\quad	\widehat{\mathbb{E}}|K_T^{n,m}|^\alpha\leq C\quad \text{and}\quad	\widehat{\mathbb{E}}\left(\int_0^T|Z_s^{n,m}|^2ds\right)^{\alpha/2}\leq C.
	\end{displaymath}
\end{lemma}

\begin{proof}
	By Lemma \ref{Y^{n,m}-U} and the definition of $A^{n,m,-}$ given in Equation \eqref{anm+-}, it is easy to check that $\widehat{\mathbb{E}}|A_T^{n,m,-}|^\alpha\leq C$. We have by Theorem \ref{Esti-Z-GBSDE} that
	\begin{align*}
		&\widehat{\mathbb{E}}\left(\int_0^T|Z_s^{n,m}|^2ds\right)^{\alpha/2}\\
		&\leq C\Bigg\{\widehat{\mathbb{E}}\sup_{t\in[0,T]}|Y^{n,m}_t|^\alpha+\bigg(\widehat{\mathbb{E}}\sup_{t\in[0,T]}|Y^{n,m}_t|^\alpha\bigg)^{1/2}\\
		&\hspace{1cm}\times\Bigg(\Bigg(\widehat{\mathbb{E}}\int_0^T |f(s,0,0)|^\alpha ds\Bigg)^{1/2}+\Bigg(\widehat{\mathbb{E}}\int_0^T |g(s,0,0)|^\alpha ds\Bigg)^{1/2}+\Big(\widehat{\mathbb{E}}|A_T^{n,m,-}|^\alpha\Big)^{1/2}\Bigg)\Bigg\}.
	\end{align*}
Noting that (H1) is weaker than (A1) with $\beta>2$, we obtain by Lemma \ref{est-Y} that $$\widehat{\mathbb{E}}\left(\int_0^T|Z_s^{n,m}|^2ds\right)^{\alpha/2}\leq C.$$
Further note that
	\begin{align*}
		A^{n,m,+}_T-K^{n,m}_T=&Y_0^{n,m}-\xi+\int_0^T Z_s^{n,m}dB_s+A^{n,m,-}_T\\
        &-\int_0^T f(s,Y^{n,m}_s,Z^{n,m}_s)ds-\int_0^T g(s,Y^{n,m}_s,Z^{n,m}_s)d\langle B\rangle_s.
	\end{align*}
	By simple calculation, we obtain that
	\begin{align*}
		\widehat{\mathbb{E}}|A^{n,m,+}_T-K^{n,m}_T|^\alpha\leq & C\Bigg\{\widehat{\mathbb{E}}\sup_{t\in[0,T]}|Y^{n,m}_t|^\alpha+\widehat{\mathbb{E}}\bigg(\int_0^T|Z_s^{n,m}|^2ds\bigg)^{\alpha/2}+\widehat{\mathbb{E}}|A_T^{n,m,-}|^\alpha\\
		&\hspace{2.5cm}+\widehat{\mathbb{E}}\int_0^T |f(s,0,0)|^\alpha ds+\widehat{\mathbb{E}}\int_0^T |g(s,0,0)|^\alpha ds\Bigg\}.
	\end{align*}
	Since $A^{n,m,+}_T$ and $-K^{n,m}_T$ are non-negative, we obtain the desired result.
\end{proof}

%\begin{remark}\label{r2}
%	If $m=n$, Lemma \ref{Y^{n,m}}, Lemma \ref{Y^{n,m}-U}, Lemma \ref{A^{n,m,+},Z^{n,m},K^{n,m}} still hold. We employ the notations in Remark \ref{r1}. By a similar analysis as in \cite{li2021backward} (Lemma 4.7), $(Y^n,Z^n,A^n)$ converges to $(Y,Z,A)$, which is a solution to doubly reflected $G$-BSDE with parameters $(\xi,f,L,U)$ satisfying (H2)-(H3) and (A1)-(A2), where $A^n=A^{n,-}-K^n-A^{n,+}$, $A^{n,-}=A^{n,n,-}$ and $A^{n,+}=A^{n,n,+}$. %This extends the results in \cite{li2021backward} because the Assumptions (H2)-(H3) and (A1)-(A2) is weaker than those made in \cite{li2021backward}.
%\end{remark}

By a similar analysis as the proof of Lemma 4.3, Lemma 4.4 and Theorem 5.1 in \cite{li2018reflected}, we have for any fixed $n$ and $2\leq \alpha<\beta$, 
\begin{align}
	\label{statementA}
	\lim_{m\rightarrow\infty}\widehat{\mathbb{E}}\left[\sup_{t\in[0,T]}|(Y^{n,m}_t-L_t)^-|^\alpha\right]=0,
\end{align}
 and letting $m$ go to infinity, $(Y^{n,m},Z^{n,m},A^{n,m,+}-K^{n,m})$ converges to $(\bar{Y}^n,\bar{Z}^n,\bar{A}^n)$, which is the solution of Equation \eqref{barY^n}. Specifically, we have
	\begin{equation}
	\begin{split}
		\label{statementB}
		&\lim_{m\rightarrow\infty}\widehat{\mathbb{E}}\left[\sup_{t\in[0,T]}|\bar{Y}^n_t-Y^{n,m}_t|^\alpha\right]=0,\qquad\lim_{m\rightarrow\infty}\widehat{\mathbb{E}}\left[\left(\int_0^T|\bar{Z}^n_t-Z^{n,m}_t|^2dt\right)^{\alpha/2}\right]=0, \\ &\hspace{2cm}\text{and}\quad\lim_{m\rightarrow\infty}\widehat{\mathbb{E}}\left[\sup_{t\in[0,T]}|\bar{A}^n_t-(A^{n,m,+}_t-K^{n,m}_t)|^\alpha\right]=0.
	\end{split}\end{equation}
By Lemma \ref{est-Y}, Lemma \ref{Y^{n,m}-U}, and Lemma \ref{A^{n,m,+},Z^{n,m},K^{n,m}}, together with Equation \eqref{statementB}, we have the following result.
\begin{lemma}\label{barY,barZ,barA}
	Assuming that $\xi$, $f$, $g$, $L$, and $U$ satisfy Assumptions (H2)-(H3) and (A1)-(A2). There exists a constant $C$ independent of $n$, such that for any $2\leq \alpha<\beta$, 
	\begin{align*}
		&\widehat{\mathbb{E}}\sup_{t\in[0,T]}|\bar{Y}^n_t|^\alpha\leq C, \qquad\qquad \widehat{\mathbb{E}}\left(\int_0^T |\bar{Z}^n_t|^2dt\right)^{\alpha/2}\leq C, \\ &\widehat{\mathbb{E}}\sup_{t\in[0,T]}|\bar{A}^{n}_t|^\alpha\leq C, \quad\;\text{and}\;\quad \widehat{\mathbb{E}}\sup_{t\in[0,T]}|(\bar{Y}^n_t-U_t)^+|^\alpha\leq \frac{C}{n^\alpha}.
	\end{align*}
\end{lemma}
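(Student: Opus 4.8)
The plan is to read off all four bounds as $m\to\infty$ limits of the corresponding uniform-in-$(n,m)$ estimates already established for the doubly penalized $G$-BSDE \eqref{Y^{n,m}}. The three inputs are Lemma \ref{est-Y} (i.e.\ $\hat{\mathbb{E}}[\sup_{t\in[0,T]}|Y^{n,m}_t|^\alpha]\leq C$), Lemma \ref{A^{n,m,+},Z^{n,m},K^{n,m}} (uniform $\alpha$-moment bounds on $A^{n,m,+}_T$, $A^{n,m,-}_T$, $K^{n,m}_T$ and on $(\int_0^T|Z^{n,m}_s|^2ds)^{1/2}$), and Lemma \ref{Y^{n,m}-U} ($\hat{\mathbb{E}}[\sup_{t\in[0,T]}|(Y^{n,m}_t-U_t)^+|^\alpha]\leq C/n^\alpha$), together with the fact --- crucial here --- that each constant $C$ in those lemmas is independent of both $n$ and $m$. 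The transfer to $\bar Y^n$, $\bar Z^n$, $\bar A^n$ is then read off from statement (b), which asserts that, for each fixed $n$, $Y^{n,m}\to\bar Y^n$ in $S_G^\alpha(0,T)$, $Z^{n,m}\to\bar Z^n$ in $H_G^\alpha(0,T)$ and $A^{n,m,+}-K^{n,m}\to\bar A^n$ in the $\sup$-$L_G^\alpha$ norm.

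Concretely, I would write $\|\eta\|:=(\hat{\mathbb{E}}[\sup_{t\in[0,T]}|\eta_t|^\alpha])^{1/\alpha}$ and $\|\zeta\|_{H}:=(\hat{\mathbb{E}}[(\int_0^T|\zeta_s|^2ds)^{\alpha/2}])^{1/\alpha}$, both of which are genuine norms since Minkowski's inequality holds for the sublinear expectation $\hat{\mathbb{E}}$. The reverse triangle inequality applied to the convergences in (b) gives $\|\bar Y^n\|=\lim_{m\to\infty}\|Y^{n,m}\|$ and $\|\bar Z^n\|_H=\lim_{m\to\infty}\|Z^{n,m}\|_H$, which yields the first two estimates with the same $n$-independent $C$. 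For $\bar A^n$: since $A^{n,m,+}$ is non-decreasing with $A^{n,m,+}_0=0$ and $-K^{n,m}$ is non-decreasing (recall $K^{n,m}$ is a non-increasing $G$-martingale with $K^{n,m}_0=0$), the process $A^{n,m,+}-K^{n,m}$ is non-decreasing from $0$, so $\sup_{t\in[0,T]}|A^{n,m,+}_t-K^{n,m}_t|=A^{n,m,+}_T-K^{n,m}_T\leq A^{n,m,+}_T+|K^{n,m}_T|$; taking $\alpha$-th moments and using Lemma \ref{A^{n,m,+},Z^{n,m},K^{n,m}} bounds $\|A^{n,m,+}-K^{n,m}\|$ by an $n$-independent constant, and letting $m\to\infty$ via (b) gives $\hat{\mathbb{E}}[\sup_{t\in[0,T]}|\bar A^n_t|^\alpha]\leq C$ (as $\bar A^n$ is itself non-decreasing, the supremum is attained at $T$). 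Finally, since $x\mapsto x^+$ is $1$-Lipschitz, $\sup_{t\in[0,T]}|(\bar Y^n_t-U_t)^+-(Y^{n,m}_t-U_t)^+|\leq\sup_{t\in[0,T]}|\bar Y^n_t-Y^{n,m}_t|\to0$ in $L_G^\alpha$, hence $\hat{\mathbb{E}}[\sup_{t\in[0,T]}|(\bar Y^n_t-U_t)^+|^\alpha]=\lim_{m\to\infty}\hat{\mathbb{E}}[\sup_{t\in[0,T]}|(Y^{n,m}_t-U_t)^+|^\alpha]\leq C/n^\alpha$ by Lemma \ref{Y^{n,m}-U}.

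I do not expect a genuine obstacle here: the substance of the lemma has already been carried out at the level of the $(n,m)$-approximation, and the only thing to be careful about is the bookkeeping, namely that the passage $m\to\infty$ does not spoil $n$-independence. This holds precisely because Lemmas \ref{est-Y}, \ref{Y^{n,m}-U} and \ref{A^{n,m,+},Z^{n,m},K^{n,m}} were stated with constants uniform in both parameters, and because the functionals involved (the $\sup$-$L_G^\alpha$ norm, the $H_G^\alpha$ norm, and the map $x\mapsto x^+$) are continuous for the modes of convergence supplied by statement (b).
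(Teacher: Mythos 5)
Your proposal is correct and coincides with the paper's (essentially unwritten) argument: the paper simply asserts the lemma "building upon" Lemmas \ref{est-Y}, \ref{Y^{n,m}-U}, \ref{A^{n,m,+},Z^{n,m},K^{n,m}} and statements (a)--(b), which is exactly the pass-to-the-limit-in-$m$ bookkeeping you carry out, including the observation that $A^{n,m,+}-K^{n,m}$ is non-decreasing and that $x\mapsto x^+$ is $1$-Lipschitz. No gap.
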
	

Finally, we investigate the difference of two solutions to the doubly reflected $G$-BSDE \eqref{eqn:DRGBSDE}.
\begin{proposition}\label{a prior estimate}
	Let $(Y^i,Z^i,A^i)$ for $i=1,2$  be the solutions to the doubly reflected $G$-BSDE \eqref{eqn:DRGBSDE} with parameters $(\xi^i,f^i,g^i,L^i,U^i)$, which satisfy Assumptions (H2)-(H3) and (A1)-(A2). Let $\{A^{i,n,+}\}_{n\in\mathbb{N}}$, $\{A^{i,n,-}\}_{n\in\mathbb{N}}$ and $\{K^{i,n}\}_{n\in\mathbb{N}}$ be the approximate sequences for $(Y^i,A^i)$ with order $\alpha$ w.r.t. $L^i$ and $U^i$, for $2\leq \alpha<\beta$. 
	%\begin{align}\label{aiain}
	%   \lim_{n\rightarrow\infty}\widehat{\mathbb{E}}[\sup_{t\in[0,T]}|A^i_t-A^{i,n}_t|^\alpha]=0,
	%\end{align}
	%where $A^{i,n}=A^{i,n,+}-A^{i,n,-}-K^{i,n}$, $i=1,2$.} 
Set $$\widehat{Y}_t=Y^1_t-Y^2_t,  \ \widehat{\xi}=\xi^1-\xi^2,\ \widehat{L}_t=L^1_t-L^2_t\text{ and } \widehat{U}_t=U^1_t-U^2_t.$$
Then there exists a constant $C:=C(\alpha,T, \kappa,G)>0$ such that
\begin{align*}
	|\widehat{Y}_t|^\alpha\leq& C\left(\sum_{i=1}^2\widehat{\mathbb{E}}_t\sup_{s\in[t,T]}|Y^i_s|^\alpha\right)^{\frac{\alpha-2}{2}}\times\liminf_{n\rightarrow\infty}\left(\sum_{i=1}^2\left(\widehat{\mathbb{E}}_t|A^{i,n,+}_T|^\alpha+\widehat{\mathbb{E}}_t|A^{i,n,-}_T|^\alpha\right)\right)^{\frac{1}{\alpha}}\\
	&\times \left(\widehat{\mathbb{E}}_t\sup_{s\in[t,T]}|\widehat{L}_s|^\alpha+\widehat{\mathbb{E}}_t\sup_{s\in[t,T]}|\widehat{U}_s|^\alpha\right)^{\frac{1}{\alpha}}
	+C\widehat{\mathbb{E}}_t\left[|\widehat{\xi}|^\alpha+\int_t^T|\widehat{f}_s|^\alpha ds+\int_t^T|\widehat{g}_s|^\alpha ds\right],
\end{align*}
where
$\widehat{f}_s=\Big|f^1(s,Y_s^2,Z_s^2)-f^2(s,Y_s^2,Z_s^2)\Big|$ and $\widehat{g}_s=\Big|g^1(s,Y_s^2,Z_s^2)-g^2(s,Y_s^2,Z_s^2)\Big|$.
\end{proposition}

\begin{proof}
Set $$\widehat{Z}_t=Z_t^1-Z_t^2, \qquad \widehat{A}_t=A_t^1-A_t^2,\quad \text{and}\quad H_t=|\widehat{Y}_t|^2.$$ 
For any $r>0$, applying $G$-It\^{o}'s formula to $H_t^{\alpha/2}e^{rt}=|\widehat{Y}_t|^{\alpha} e^{rt}$, we have
\begin{align}\label{dr}
	&\hspace{-0.5cm}\quad H_t^{\alpha/2}e^{rt}+\int_t^T re^{rs}H_s^{\alpha/2}ds+\int_t^T \frac{\alpha}{2} e^{rs}
	H_s^{\alpha/2-1}\widehat{Z}_s^2d\langle B\rangle_s\nonumber\\
	&=|\widehat{\xi}|^\alpha e^{rT}+
	\alpha\Big(1-\frac{\alpha}{2}\Big)\int_t^Te^{rs}H_s^{\alpha/2-2}\widehat{Y}_s^2\widehat{Z}_s^2d\langle B\rangle_s-\int_t^T\alpha e^{rs}H_s^{\alpha/2-1}\widehat{Y}_s\widehat{Z}_sdB_s\nonumber\\
	&\quad+\int_t^T{\alpha} e^{rs}H_s^{\alpha/2-1}\widehat{Y}_s\Big(f^1(s,Y_s^1,Z_s^1)-f^2(s,Y_s^2,Z_s^2)\Big)ds +\int_t^T\alpha e^{rs}H_s^{\alpha/2-1}\widehat{Y}_sd\widehat{A}_s\\
	&\quad+\int_t^T{\alpha} e^{rs}H_s^{\alpha/2-1}\widehat{Y}_s\Big(g^1(s,Y_s^1,Z_s^1)-g^2(s,Y_s^2,Z_s^2)\Big)d\langle B\rangle_s.\nonumber
\end{align}
By the Lipschitz assumption on $f^1$ and $g^1$, together with H\"{o}lder's inequality and the fact that $\underline{\sigma}^2 ds\leq d\langle B\rangle_s\leq \bar{\sigma}^2ds$, we have%similar analysis as the proof of Proposition 3.1 in \cite{li2021backward} implies that
\begin{align*}
	&\hspace{-0.5cm}\int_t^T{\alpha} e^{rs}H_s^{\alpha/2-1}\widehat{Y}_s\Big(f^1(s,Y_s^1,Z_s^1)-f^2(s,Y_s^2,Z_s^2)\Big)ds\\
	&\hspace{-0.5cm}+\int_t^T{\alpha} e^{rs}H_s^{\alpha/2-1}\widehat{Y}_s\Big(g^1(s,Y_s^1,Z_s^1)-g^2(s,Y_s^2,Z_s^2)\Big)d\langle B\rangle_s\\
	\leq &\int_t^T{\alpha}e^{rs}H_s^{\frac{\alpha-1}{2}}\Big\{\Big|f^1(s,Y_s^1,Z_s^1)-f^1(s,Y_s^2,Z_s^2)\Big|+\widehat{f}_s\Big\}ds\\
	&+\int_t^T{\alpha}e^{rs}H_s^{\frac{\alpha-1}{2}}\Big\{\Big|g^1(s,Y_s^1,Z_s^1)-g^1(s,Y_s^2,Z_s^2)\Big|+\widehat{g}_s\Big\}d\langle B\rangle_s\\
	\leq &\int_t^T{\alpha}e^{rs}H_s^{\frac{\alpha-1}{2}}\Big\{\kappa\Big(|\widehat{Y}_s|+|\widehat{Z}_s|\Big)+\widehat{f}_s\Big\}ds+\int_t^T{\alpha}e^{rs}H_s^{\frac{\alpha-1}{2}}\Big\{\kappa\Big(|\widehat{Y}_s|+|\widehat{Z}_s|\Big)+\widehat{g}_s\Big\}d\langle B\rangle_s\\
	\leq &\int_t^T{\alpha} e^{rs}H_s^{\alpha/2-1/2}\Big(|\widehat{f}_s|+\bar{\sigma}^2|\widehat{g}_s|\Big)ds+\frac{\alpha(\alpha-1)}{4}\int_t^Te^{rs}H_s^{\alpha/2-1}\widehat{Z}_s^2d\langle B\rangle_s\\
	&+\left((1+\bar{\sigma}^2)\alpha \kappa+(1+\bar{\sigma}^4)\frac{2\alpha \kappa^2}{\underline{\sigma}^2(\alpha-1)}\right)\int_t^T e^{rs}H_s^{\alpha/2}ds.
\end{align*}
By Young's inequality, we obtain
\begin{align*}
	\int_t^T{\alpha} e^{rs}H_s^{\alpha/2-1/2}\Big(|\widehat{f}_s|+\bar{\sigma}^2|\widehat{g}_s|\Big)ds
	\leq &2(\alpha-1)\int_t^T  e^{rs}H_s^{\alpha/2}ds\\
 &+\int_t^T e^{rs}|\widehat{f}_s|^\alpha ds+\bar{\sigma}^{2\alpha}\int_t^T e^{rs}|\widehat{g}_s|^\alpha ds.
\end{align*}
%	Let $\{A^{i,n,+}\}_{n\in\mathbb{N}}$, $\{A^{i,n,-}\}_{n\in\mathbb{N}}$ and $\{K^{i,n}\}_{n\in\mathbb{N}}$ be the approximation sequences for $A^{i}$, $i=1,2$.
Set $$A^{i,n}=A^{i,n,+}-A^{i,n,-}-K^{i,n}, \quad i=1,2,$$ $$\widehat{Y}^L_t=(Y^1_t-L^1_t)-(Y^2_t-L_t^2)\quad\text{and}\quad\widehat{Y}^U_t=(U^1_t-Y_t^1)-(U^2_t-Y_t^2).$$ Noting that 
\begin{align*}
\widehat{Y}_t=\widehat{Y}^L_t+\widehat{L}_t\leq Y^1_t-L^1_t+|\widehat{L}_t|,\qquad
-\widehat{Y}_t=\widehat{Y}^U_t-\widehat{U}_t \leq U_t^1-Y_t^1+|\widehat{U}_t|,
\end{align*}
and $A^{1,n,+}$, $A^{1,n,-}$ are non-decreasing processes, it is easy to check that
\begin{align*}
	\int_t^T\alpha e^{rs}H_s^{\alpha/2-1}\widehat{Y}_sdA^{1}_s
	=&\int_t^T\alpha e^{rs}H_s^{\alpha/2-1}\widehat{Y}_sd({A}^1_s-A_s^{1,n})+\int_t^T\alpha e^{rs}H_s^{\alpha/2-1}\widehat{Y}_sd{A}^{1,n}_s\\
	\leq &\int_t^T\alpha e^{rs}H_s^{\alpha/2-1}(Y^1_s-L^1_s)dA_s^{1,n,+}+\int_t^T \alpha e^{rs}H^{\alpha/2-1}_s|\widehat{L}_s|dA_s^{1,n,+}\\
	&+\int_t^T\alpha e^{rs}H_s^{\alpha/2-1}(U_s^1-Y_s^1)dA_s^{1,n,-}+\int_t^T \alpha e^{rs}H^{\alpha/2-1}_s|\widehat{U}_s|dA_s^{1,n,-}\\
	&+\left|\int_t^T\alpha e^{rs}H_s^{\alpha/2-1}\widehat{Y}_sd({A}^1_s-A_s^{1,n})\right|-\int_t^T\alpha e^{rs}H_s^{\alpha/2-1}(\widehat{Y}_s)^+d{K}^{1,n}_s.
	%	&+\int_t^T\alpha e^{rs}H_s^{\alpha/2-1}(\widehat{Y}_s)^-dA_s^{1,n,-}-\int_t^T\alpha e^{rs}H_s^{\alpha/2-1}(\widehat{Y}_s)^+dK_s^{1,n}.
\end{align*}
Similarly, we have
\begin{align*}
	\int_t^T\alpha e^{rs}H_s^{\alpha/2-1}(-\widehat{Y}_s)dA^{2}_s
	\leq &\int_t^T\alpha e^{rs}H_s^{\alpha/2-1}(Y^2_s-L^2_s)dA_s^{2,n,+}+\int_t^T \alpha e^{rs}H^{\alpha/2-1}_s|\widehat{L}_s|dA_s^{2,n,+}\\
	&+\int_t^T\alpha e^{rs}H_s^{\alpha/2-1}(U_s^2-Y_s^2)dA_s^{2,n,-}+\int_t^T \alpha e^{rs}H^{\alpha/2-1}_s|\widehat{U}_s|dA_s^{2,n,-}\\
	&+\left|\int_t^T\alpha e^{rs}H_s^{\alpha/2-1}\widehat{Y}_sd({A}^2_s-A_s^{2,n})\right|-\int_t^T\alpha e^{rs}H_s^{\alpha/2-1}(\widehat{Y}_s)^-d{K}^{2,n}_s.
	%	&+\int_t^T\alpha e^{rs}H_s^{\alpha/2-1}(\widehat{Y}_s)^-dA_s^{1,n,-}-\int_t^T\alpha e^{rs}H_s^{\alpha/2-1}(\widehat{Y}_s)^+dK_s^{1,n}.
\end{align*}
Since $|H_s^{\alpha/2-1}\widehat{Y}_s|\leq |\widehat{Y}_s|^{\alpha-1}$ for $s\in[0,T]$, it is easy to check that $H^{\alpha/2-1}\widehat{Y}\in S_G^{\frac{\alpha}{\alpha-1}}$. This fact, Lemma 3.1 in \cite{li2021backward} and
\begin{align*}
	\lim_{n\rightarrow\infty}\widehat{\mathbb{E}}\sup_{t\in[0,T]}|A^i_t-A^{i,n}_t|^\alpha=0,
\end{align*} imply that%, we have for any $t\in[0,T]$
\begin{displaymath}
	\lim_{n\rightarrow\infty}\widehat{\mathbb{E}}\left|\int_t^T\alpha e^{rs}H_s^{\alpha/2-1}\widehat{Y}_sd({A}^1_s-A_s^{1,n})\right|=0.
\end{displaymath}
%	Note that $Y_s^i\geq L_s$, for any $s\in[0,T]$ and $i=1,2$, which implies that $\widehat{Y}_s\leq Y_s^1-L_s$. Hence, we have $(\widehat{Y}_s)^+\leq Y_s^1-L_s$.
Note that $U^1_t\geq Y^1_t$ and $A^{1,n,-}$ is non-decreasing. By the definition of $H$ and H\"{o}lder's inequality, it is easy to check that%By simple calculation, we obtain that
\begin{align*}
	&\hspace{-1cm}\widehat{\mathbb{E}}\left[\int_t^T\alpha e^{rs}H_s^{\alpha/2-1}(U_s^1-Y_s^1)dA_s^{1,n,-}\right]\\
	\leq &C\widehat{\mathbb{E}}\left[\sup_{t\in[0,T]}(|Y_t^1|+|Y_t^2|)^{\alpha-2}\int_t^T (U_s^1-Y_s^1)d A_s^{1,n,-}\right]\\
	\leq& C\left(\widehat{\mathbb{E}}\sup_{t\in[0,T]}\Big(|Y_t^1|^\alpha+|Y_t^2|^\alpha\Big)\right)^{\frac{\alpha-2}{\alpha}}\left(\widehat{\mathbb{E}}\left|\int_t^T (U_s^1-Y_s^1)d A_s^{1,n,-}\right|^{\frac{\alpha}{2}}\right)^{\frac{2}{\alpha}}.
\end{align*}
It follows from the $\textmd{ASC}_{\alpha}$ that%, we have
\begin{displaymath}
	\lim_{n\rightarrow\infty}\widehat{\mathbb{E}}\left|\int_t^T\alpha e^{rs}H_s^{\alpha/2-1}(U_s^1-Y_s^1)dA_s^{1,n,-}\right|=0.
\end{displaymath}
Similar analyses yield that
\begin{align*}
	&\lim_{n\rightarrow\infty}\widehat{\mathbb{E}}\left|\int_t^T\alpha e^{rs}H_s^{\alpha/2-1}(Y_s^1-L_s^1)dA_s^{1,n,+}\right|=0,\\
	&\lim_{n\rightarrow\infty}\widehat{\mathbb{E}}\left|\int_t^T\alpha e^{rs}H_s^{\alpha/2-1}(Y_s^2-L_s^2)dA_s^{2,n,+}\right|=0,\\
	&\lim_{n\rightarrow\infty}\widehat{\mathbb{E}}\left|\int_t^T\alpha e^{rs}H_s^{\alpha/2-1}(U_s^2-Y_s^2)dA_s^{2,n,-}\right|=0.
\end{align*}
By the non-decreasing property of $A^{1,n,+}$, the definition of $H$, and H\"{o}lder's inequality, we obtain that
\begin{align*}
	\widehat{\mathbb{E}}_t\left[\int_t^T \alpha e^{rs}H^{\alpha/2-1}_s|\widehat{L}_s|dA_s^{1,n,+}\right]
	\leq& C\widehat{\mathbb{E}}_t\left[\sup_{s\in[t,T]}(|Y_s^1|+|Y_s^2|)^{\alpha-2}\sup_{s\in[t,T]}|\widehat{L}_s||A^{1,n,+}_T|\right]\\
	\leq &CI_t^\alpha(Y)\bigg(\widehat{\mathbb{E}}_t\sup_{s\in[t,T]}|\widehat{L}_s|^\alpha\bigg)^{\frac{1}{\alpha}}\bigg(\widehat{\mathbb{E}}_t|A_T^{1,n,+}|^\alpha]\bigg)^{\frac{1}{\alpha}},
\end{align*}
where $$I_t^\alpha(Y)=\left(\sum_{i=1}^2\widehat{\mathbb{E}}_t\sup_{s\in[t,T]}|Y_s^i|^\alpha\right)^{\frac{\alpha-2}{\alpha}}.$$ Similarly, we have 	\begin{align*}
	&\widehat{\mathbb{E}}_t\left[\int_t^T \alpha e^{rs}H^{\alpha/2-1}_s|\widehat{U}_s|dA_s^{1,n,-}\right]
	%	\leq& C\widehat{\mathbb{E}}_t\bigg[\sup_{s\in[t,T]}(|Y_s^1|+|Y_s^2|)^{\alpha-2}\sup_{s\in[t,T]}|\widehat{U}_s||A^{1,n,-}_T|\\
	\leq CI_t^\alpha(Y)\bigg(\widehat{\mathbb{E}}_t\sup_{s\in[t,T]}|\widehat{U}_s|^\alpha\bigg)^{\frac{1}{\alpha}}\bigg(\widehat{\mathbb{E}}_t|A_T^{1,n,-}|^\alpha\bigg)^{\frac{1}{\alpha}},\\
	&\widehat{\mathbb{E}}_t\left[\int_t^T \alpha e^{rs}H^{\alpha/2-1}_s|\widehat{L}_s|dA_s^{2,n,+}\right]
	%	\leq& C\widehat{\mathbb{E}}_t\bigg[\sup_{s\in[t,T]}(|Y_s^1|+|Y_s^2|)^{\alpha-2}\sup_{s\in[t,T]}|\widehat{U}_s||A^{1,n,-}_T|\\
	\leq  CI_t^\alpha(Y)\bigg(\widehat{\mathbb{E}}_t\sup_{s\in[t,T]}|\widehat{L}_s|^\alpha\bigg)^{\frac{1}{\alpha}}\bigg(\widehat{\mathbb{E}}_t|A_T^{2,n,+}|^\alpha\bigg)^{\frac{1}{\alpha}},\\
	&\widehat{\mathbb{E}}_t\left[\int_t^T \alpha e^{rs}H^{\alpha/2-1}_s|\widehat{U}_s|dA_s^{2,n,-}\right]
	%	\leq& C\widehat{\mathbb{E}}_t\bigg[\sup_{s\in[t,T]}(|Y_s^1|+|Y_s^2|)^{\alpha-2}\sup_{s\in[t,T]}|\widehat{U}_s||A^{1,n,-}_T|\\
	\leq  CI_t^\alpha(Y)\bigg(\widehat{\mathbb{E}}_t\sup_{s\in[t,T]}|\widehat{U}_s|^\alpha\bigg)^{\frac{1}{\alpha}}\bigg(\widehat{\mathbb{E}}_t|A_T^{2,n,-}|^\alpha\bigg)^{\frac{1}{\alpha}}.
\end{align*}
%	Noting that $(\widehat{Y}^L_t)^+\leq (Y_t^1-L_t^1)^+=Y_t^1-L_t^1$ and $\{-\int_0^t (Y_s^1-L_s^1)dA^{1,n,+}_s\}_{t\in[0,T]}$ is a non-increasing $G$-martingale, therefore, we have
%	\begin{displaymath}
	%		0=\widehat{\mathbb{E}}_t[-\int_t^T \alpha e^{rs}H^{\alpha/2-1}_s (Y^1_s-L_s^1)dA^{1,n,+}_s ]\leq \widehat{\mathbb{E}}_t[-\int_t^T \alpha e^{rs}H^{\alpha/2-1}_s (\widehat{Y}^L_t)^+dA^{1,n,+}_s ]\leq 0,
	%	\end{displaymath}
%	which implies that $\{-\int_0^t \alpha e^{rs}H^{\alpha/2-1}_s (\widehat{Y}^L_t)^+dA^{1,n,+}_s\}_{t\in[0,T]}$ is a non-increasing $G$-martingale.
Set $$M^n_t=\int_0^t\alpha e^{rs}H_s^{\alpha/2-1}\Big(\widehat{Y}_s\widehat{Z}_sdB_s+(\widehat{Y}_s)^{+}dK^{1,n}_s+(\widehat{Y}_s)^-dK_s^{2,n}\Big).$$ By Lemma 3.4 in \cite{hu2014backward},  $M^n$ is a $G$-martingale. Let 
$$
r=2(\alpha-1)+(1+\bar{\sigma}^2)\alpha \kappa+(1+\bar{\sigma}^4)\frac{2\alpha \kappa^2}{\underline{\sigma}^2(\alpha-1)}+1.
$$
Combining the above inequalities, we obtain
\begin{align*}
	&\hspace{-0.5cm}H_t^{\alpha/2}e^{rt}+(M^n_T-M^n_t)\\ \leq &|\widehat{\xi}|^{\alpha} e^{rT}+\int_t^T e^{rs}|\widehat{f}_s|^\alpha ds+\bar{\sigma}^{2\alpha}\int_t^T e^{rs}|\widehat{g}_s|^\alpha ds+\sum_{i=1}^2\left|\int_t^T\alpha e^{rs}H_s^{\alpha/2-1}\widehat{Y}_sd({A}^i_s-A_s^{i,n})\right|\\
	&+\int_t^T\alpha e^{rs}H_s^{\alpha/2-1}|\widehat{L}_s|d(A_s^{1,n,+}+A_s^{2,n,+})
	+\int_t^T\alpha e^{rs}H_s^{\alpha/2-1}|\widehat{U}_s|d(A_s^{1,n,-}+A_s^{2,n,-})\\
	&+\sum_{i=1}^2 \int_t^T\alpha e^{rs}H_s^{\alpha/2-1}(U_s^i-Y_s^i)dA_s^{i,n,-}+\sum_{i=1}^2 \int_t^T\alpha e^{rs}H_s^{\alpha/2-1}(Y_s^i-L_s^i)dA_s^{i,n,+}.
\end{align*}
Taking conditional expectations on both sides and letting $n\rightarrow \infty$, the proof is complete.
\end{proof}

\subsection{Proof of Theorem \ref{main}}
\label{sec:mainproof}
\noindent	\textbf{(a).}	We first prove the uniqueness of the solution to the doubly reflected $G$-BSDE \eqref{eqn:DRGBSDE}. Let $(Y^i,Z^i,A^i)$ for $i=1,2$ be the solutions to the doubly reflected $G$-BSDE \eqref{eqn:DRGBSDE}. By Proposition \ref{a prior estimate}, we conclude that $Y^1\equiv Y^2$. Applying $G$-It\^{o}'s formula to $(Y^1_t-Y^2_t)^2$, we obtain that 
\begin{align*}
	\int_t^T |Z^1_s-Z^2_s|^2d\langle B\rangle_s=&-(Y^1_t-Y^2_t)^2+\int_t^T (Y^1_s-Y^2_s)\Big(f(s,Y^1_s,Z^1_s)-f(s,Y^2_s,Z^2_s)\Big)ds\\
	&+\int_t^T (Y^1_s-Y^2_s)\Big(g(s,Y^1_s,Z^1_s)-g(s,Y^2_s,Z^2_s)\Big)d\langle B\rangle_s\\
	&-\int_t^T 2(Y^1_s-Y^2_s)(Z^1_s-Z^2_s)dB_s+\int_t^T 2(Y^1_s-Y^2_s)d(A^1_s-A^2_s).
\end{align*}
Using the fact that $Y^1\equiv Y^2$, taking $t=0$ in the above equation, it is easy to check that 
\begin{align*}
	\widehat{\mathbb{E}}\left(\int_0^T |Z^1_s-Z^2_s|^2d\langle B\rangle_s\right)^{\alpha/2}=0.
\end{align*}
Since $\underline{\sigma}^2>0$, it follows that $Z^1\equiv Z^2$. Note that for $i=1,2$,
\begin{align*}
	A^i_t=Y^i_0-Y^i_t-\int_0^t f(s,Y^i_s,Z^i_s)ds-\int_0^t g(s,Y^i_s,Z^i_s)d\langle B\rangle_s+\int_0^t Z^i_s dB_s.
\end{align*}
Applying the Lipschitz assumption on $f,g$, H\"{o}lder's inequality and Proposition \ref{the1.3}, we have 
\begin{align*}
	\widehat{\mathbb{E}}\sup_{t\in[0,T]}|A^1_t-A^2_t|^\alpha\leq C\widehat{\mathbb{E}}\sup_{t\in[0,T]}|Y^1_t-Y^2_t|^\alpha+C\widehat{\mathbb{E}}\left(\int_0^T |Z^1_s-Z^2_s|^2ds\right)^{\alpha/2}=0,
\end{align*}
which implies that $A^1\equiv A^2$.

Then, we prove the existence of the solution to the doubly reflected $G$-BSDE \eqref{eqn:DRGBSDE}. Letting $m=n$ in Equation \eqref{Y^{n,m}}, we define
	\begin{align}
		\label{eqn:Ann}
	Y^n=Y^{n,n}, \ Z^n=Z^{n,n}, \ K^n=K^{n,n}, \	A^{n,-}=A^{n,n,-}\text{ and } A^{n,+}=A^{n,n,+}.
	\end{align}
 Set $$A^n=A^{n,-}-K^n-A^{n,+}.$$ 
 By a similar analysis as the proof of Lemma 4.4 and Lemma 4.7 in \cite{li2021backward}, we have for any $2\leq \alpha<\beta$, 
    \begin{align}\label{statementC}
        \lim_{n\rightarrow \infty}\hE\left[\sup_{t\in[0,T]}|(Y^n_t-L_t)^-|^\alpha\right]=0
    \end{align}
and
    \begin{align*}
		&\lim_{n,n'\rightarrow\infty}\widehat{\mathbb{E}}\left[\sup_{t\in[0,T]}|Y_t^n-Y_t^{n'}|^\alpha\right]=0,\qquad\lim_{n,n'\rightarrow\infty}\widehat{\mathbb{E}}\left[\left(\int_0^T|Z_s^n-Z_s^{n'}|^2ds\right)^{\frac{\alpha}{2}}\right]=0, \\
		&\hspace{2cm}\text{and}\quad\lim_{n,n'\rightarrow\infty}\widehat{\mathbb{E}}\left[\sup_{t\in[0,T]}|A_t^n-A_t^{n'}|^\alpha\right]=0.
	\end{align*}
	Denote by $(Y,Z,A)$ the limit of $(Y^n,Z^n,A^n)$ as $n$ goes to infinity. Recalling the definitions of $A^{n,+}$ and $A^{n,-}$ given in Equation \eqref{eqn:Ann}, and the fact that  $\widehat{\mathbb{E}}|A^{n,+}|^\alpha]\leq C$ and $\widehat{\mathbb{E}}|A^{n,-}|^\alpha]\leq C$ from Lemma \ref{A^{n,m,+},Z^{n,m},K^{n,m}}, 
 we have $L_t\leq Y_t\leq U_t$ for $t\in[0,T]$. Letting $n\rightarrow \infty$ in Equation \eqref{Y^{n,m}} (recalling here we consider the case that $m=n$) yields
 \begin{align*}
Y_t=\xi+\int_t^T f(s,Y_s,Z_s)ds+\int_t^T g(s,Y_s,Z_s)d\langle B\rangle_s-\int_t^T Z_s dB_s+(A_T-A_t).
 \end{align*} 
 It remains to prove that $(Y,A)$ satisfies the $\textmd{ASC}_{\alpha}$. We claim that $\{A^{n,+}\}_{n\in\mathbb{N}}$, $\{A^{n,-}\}_{n\in\mathbb{N}}$ and $\{K^n\}_{n\in\mathbb{N}}$ are the approximate sequences for $(Y,A)$ with order $\alpha$. It suffices to show that 
 \begin{align*}
     \lim_{n\rightarrow\infty}\widehat{\mathbb{E}}\left|\int_0^T (Y_s-L_s)d A_s^{n,+}\right|^{\alpha/2}=0\quad\text{and}\quad\lim_{n\rightarrow\infty}\widehat{\mathbb{E}}\left|\int_0^T (U_s-Y_s)d A_s^{n,-}\right|^{\alpha/2}=0.
 \end{align*}
 We only prove the first equation since the second one can be proved similarly. By the definition of $A^{n,+}$ given in Equation \eqref{eqn:Ann}, we obtain that 
 \begin{align*}
   \int_0^T (Y_s-L_s)d A_s^{n,+}&=\int_0^T (Y_s-Y^n_s)d A_s^{n,+}+\int_0^T (Y^n_s-L_s)n(Y^n_s-L_s)^-ds\\
   &\leq \sup_{t\in[0,T]}|Y_t-Y^n_t||A_T^{n,+}|.
 \end{align*}
 Then, it is easy to check that 
 \begin{align*}
     \lim_{n\rightarrow\infty}\widehat{\mathbb{E}}\left|\int_0^T (Y_s-L_s)d A_s^{n,+}\right|^{\alpha/2}\leq \lim_{n\rightarrow \infty}\left(\widehat{\mathbb{E}}\sup_{t\in[0,T]}|Y_t-Y^n_t|^\alpha\right)^{\frac{1}{2}}\left(\widehat{\mathbb{E}}|A^{n,+}_T|\right)^{\frac{1}{2}}=0.
 \end{align*}
 Therefore, $(Y,Z,A)$ is the solution to the doubly reflected $G$-BSDE \eqref{eqn:DRGBSDE}.\\
 
 \noindent	\textbf{(b).} Next, we demonstrate the decreasing convergence of $\bar{Y}^n$ to $Y$.	By Theorem \ref{the1.16}, we have $\bar{Y}^{n_1}_t\geq \bar{Y}^{n_2}_t$  for any $n_1\leq n_2$ and $t\in[0,T]$. It suffices to show that for any $2\leq \alpha<\beta$,
	\begin{equation}\label{e3}
		\lim_{n\rightarrow\infty}\widehat{\mathbb{E}}\sup_{t\in[0,T]}|Y^n_t-\bar{Y}^n_t|^\alpha=0.
	\end{equation}
	Noting that $\bar{Y}^n_t\geq L_t$ for any $n\in\mathbb{N}$ and any $t\in[0,T]$, $(\bar{Y}^n,\bar{Z}^n,\bar{A}^n)$ satisfies the following equation
	\begin{align*}
		\bar{Y}_t^n=&\xi+\int_t^T f(s,\bar{Y}^n_s,\bar{Z}_s^n)ds+\int_t^T g(s,\bar{Y}^n_s,\bar{Z}_s^n)d\langle B\rangle_s-\int_t^T \bar{Z}_s^ndB_s\\
		&+(\bar{A}^n_T-\bar{A}^n_t)-\int_t^T n(\bar{Y}^n_s-U_s)^+ds+\int_t^Tn(\bar{Y}^n_s-L_s)^-ds.
	\end{align*}
	Additionally, since  $L_t\leq \bar{Y}^n_t\leq \bar{Y}^1_t$ for any $n\in\mathbb{N}$ and $t\in[0,T]$,  there exists a constant $C$ independent of $n$, such that for any $2\leq \alpha<\beta$,
    \begin{align}\label{est-barYn}
        \hE\sup_{t\in[0,T]}|\bar{Y}^n_t|^\alpha\leq C.
    \end{align} 
    By Theorem \ref{the1.5},  $$\widehat{Y}^n_t=\bar{Y}^n_t-Y_t^n\geq 0$$
     for any $n\in\mathbb{N}$ and $t\in[0,T]$. For any constant $r$, applying $G$-It\^{o}'s formula to $e^{rt}(H^n_t)^{\frac{\alpha}{2}}$, where $H^n_t=|\widehat{Y}^n_t|^2$, we have
	\begin{equation}\label{e1}
		\begin{split}
			&\hspace{-0.5cm} |H_t^n|^{\alpha/2}e^{rt}+\int_t^T re^{rs}|H^n_s|^{\alpha/2}ds+\int_t^T \frac{\alpha}{2} e^{rs}
			|H_s^n|^{\alpha/2-1}(\widehat{Z}^n_s)^2d\langle B\rangle_s\\
			=&
			\alpha(1-\frac{\alpha}{2})\int_t^Te^{rs}|H_s^n|^{\alpha/2-2}(\widehat{Y}^n_s)^2(\widehat{Z}^n_s)^2d\langle B\rangle_s
			-\int_t^T\alpha e^{rs}|H_s^n|^{\alpha/2-1}\widehat{Y}_sn(Y_s^n-L_s)^-ds\\
			&+\int_t^T{\alpha} e^{rs}|H_s^n|^{\alpha/2-1}\widehat{Y}_s\widehat{f}^n_sds+\int_t^T{\alpha} e^{rs}|H_s^n|^{\alpha/2-1}\widehat{Y}_s\widehat{g}^n_sd\langle B\rangle_s\\
            &-\int_t^T\alpha e^{rs}|H_s^n|^{\alpha/2-1}\Big(\widehat{Y}^n_s\widehat{Z}^n_sdB_s-\widehat{Y}^n_sd{K}^n_s-\widehat{Y}^n_sd\bar{A}^n_s\Big)\\
			&-\int_t^T \alpha e^{rs}|H_s^n|^{\alpha/2-1}\widehat{Y}_sn\Big[(\bar{Y}^n_s-U_s)^+-(Y^n_s-U_s)^+\Big]ds,
		\end{split}
	\end{equation}
	where $$\widehat{Z}^n_t=\bar{Z}^n_t-Z^n_t,\quad \widehat{f}^n_t=f(t,\bar{Y}^n_t,\bar{Z}^n_t)-f(t,Y^n_t,Z^n_t) \quad \text{and}\quad \widehat{g}^n_t=g(t,\bar{Y}^n_t,\bar{Z}^n_t)-g(t,Y^n_t,Z^n_t).$$ Applying H\"{o}lder's inequality, we have
	\begin{align*}
		&\int_t^T{\alpha} e^{rs}|H_s^n|^{\alpha/2-1}\widehat{Y}^n_s\widehat{f}^n_sds+\int_t^T{\alpha} e^{rs}|H_s^n|^{\alpha/2-1}\widehat{Y}^n_s\widehat{g}^n_sd\langle B\rangle_s\\
		&\leq \left((1+\bar{\sigma}^2)\alpha \kappa+(1+\bar{\sigma}^4)\frac{2\alpha \kappa^2}{\underline{\sigma}^2(\alpha-1)}\right)\int_t^T e^{rs}|H_s^n|^{\alpha/2}ds\\
		&\hspace{5cm}
		 +\frac{\alpha(\alpha-1)}{4}\int_t^Te^{rs}|H_s^n|^{\alpha/2-1}(\widehat{Z}^n_s)^2d\langle B\rangle_s.
	\end{align*}
	Noting that $\widehat{Y}^n_t\geq0$ and $\bar{A}^n$ is non-decreasing, it is easy to check that
	\begin{equation}\label{e2}\begin{split}
		\bullet	\quad&+\int_t^T\alpha e^{rs}|H_s^n|^{\alpha/2-1}\widehat{Y}^n_sd\bar{A}^n_s\leq \int_t^T\alpha e^{rs}|H_s^n|^{\alpha/2-1}\Big[(\bar{Y}^n_s-L_s)+(Y^n_s-L_s)^-\Big]d\bar{A}^n_s,\\
		\bullet	\quad&-\int_t^T \alpha e^{rs}|H_s^n|^{\alpha/2-1}\widehat{Y}_sn\Big[(\bar{Y}^n_s-U_s)^+-(Y^n-U_s)^+\Big]ds\leq 0,\\
		\bullet	\quad&-\int_t^T\alpha e^{rs}|H_s^n|^{\alpha/2-1}\widehat{Y}_sn(Y_s^n-L_s)^-ds\leq 0,\\
		\bullet	\quad&+\int_t^T\alpha e^{rs}|H_s^n|^{\alpha/2-1}\widehat{Y}^n_sd{K}^n_s\leq 0.
	\end{split} \end{equation}
	Set 
	$$M^n_t=\int_0^t\alpha e^{rs}|H_s^n|^{\alpha/2-1}\Big(\widehat{Y}^n_s\widehat{Z}^n_sdB_s-(\bar{Y}^n_s-L_s)d\bar{A}^n_s\Big),$$
	 which is a $G$-martingale. Letting $$r=1+\left((1+\bar{\sigma}^2)\alpha \kappa+(1+\bar{\sigma}^4)\frac{2\alpha \kappa^2}{\underline{\sigma}^2(\alpha-1)}\right),$$ 
	 all the above analyses indicate that
	\begin{displaymath}
		e^{rt}|\widehat{Y}^n_t|^\alpha+(M^n_T-M^n_t)\leq \int_t^T\alpha e^{rs}|H_s^n|^{\alpha/2-1}(Y^n_s-L_s)^-d\bar{A}^n_s.
	\end{displaymath}
	Taking conditional expectations on both sides, we have
	\begin{displaymath}
		|\widehat{Y}^n_t|^\alpha\leq C\widehat{\mathbb{E}}_t\left[\int_t^T|H_s^n|^{\alpha/2-1}(Y^n_s-L_s)^-d\bar{A}^n_s\right].%\leq C\widehat{\mathbb{E}}_t[\int_0^T|H_s^n|^{\alpha/2-1}(Y^n_s-L_s)^-d\bar{A}^n_s].
	\end{displaymath}
	Thanks to Theorem \ref{the1.2}, to obtain Equation \eqref{e3},  it suffices to show that there exists some $\gamma>1$, such that
	\begin{displaymath}
		\lim_{n\rightarrow\infty}\widehat{\mathbb{E}}\left(\int_0^T|H_s^n|^{\alpha/2-1}(Y^n_s-L_s)^-d\bar{A}^n_s\right)^\gamma=0.
	\end{displaymath}
	Indeed, for any $1<\gamma<\beta/\alpha$, we have
	\begin{align*}
		&\hspace{-1cm}\widehat{\mathbb{E}}\left(\int_0^T |H_s^n|^{\alpha/2-1}(Y_s^n-L_s)^-d\bar{A}^n_s\right)^{\gamma}\\
		\le&\widehat{\mathbb{E}}\left[\sup_{s\in[0,T]}|\widehat{Y}^n_s|^{(\alpha-2)\gamma}\sup_{s\in[0,T]}\big((Y_s^n-L_s)^-\big)^{\gamma}\big(\bar{A}^{n}_T\big)^{\gamma}\right]\\
		\le&\bigg(\widehat{\mathbb{E}}\sup_{s\in[0,T]}|\widehat{Y}^n_s|^{\alpha\gamma}\bigg)^{\frac{\alpha}{\alpha-2}}\bigg(\widehat{\mathbb{E}}\sup_{s\in[0,T]}\big((Y_s^n-L_s)^-\big)^{\alpha\gamma}	\bigg)^{\frac{1}{\alpha}}\widehat{\mathbb{E}}\Big[\big(\bar{A}^{n}_T\big)^{\alpha\gamma}\Big]^{\frac{1}{\alpha}},
	\end{align*}
	which converges to zero as $n$ goes to infinity, by Lemmas \ref{est-Y} and \ref{barY,barZ,barA}, and Equations \eqref{statementC} and \eqref{est-barYn}.\\
	
	\noindent	\textbf{(c).} %Now, we show that the solution $(Y,Z,A)$ of doubly reflected $G$-BSDE can be constructed by the penalized reflected $G$-BSDEs \eqref{barY^n}. Specifically, 
	In order to prove the last assertion in Theorem \ref{main}, it suffices to show that for any $2\leq \alpha<\beta$, we have 
	\begin{displaymath}
		\lim_{n\rightarrow\infty}\widehat{\mathbb{E}}\left(\int_0^T |\bar{Z}^n_s-Z_s^n|^2ds\right)^{\frac{\alpha}{2}}=0\quad\text{and}\quad \lim_{n\rightarrow\infty}\widehat{\mathbb{E}}\sup_{t\in[0,T]}|\widetilde{A}^n_t-A^n_t|^\alpha=0,
	\end{displaymath}
	where $\widetilde{A}^n_t=\bar{A}^n_t-\int_0^t n(\bar{Y}^n_s-U_s)^+ds$. 
	
	Letting $r=0$ and $\alpha=2$ in Equation \eqref{e1}, applying Equation \eqref{e2}, we have
	\begin{align*}
		\int_0^T (\widehat{Z}^n_s)^2d\langle B\rangle_s\leq &\int_0^T 2\widehat{Y}^n_s\widehat{f}^n_sds+\int_0^T 2\widehat{Y}^n_s\widehat{g}^n_sd\langle B\rangle_s-\int_0^T 2\widehat{Y}^n_s\widehat{Z}_s^ndB_s+\int_0^T 2\widehat{Y}_s^nd\bar{A}^n_s\\
		\leq &\kappa_\varepsilon\int_0^T (\widehat{Y}^n_s)^2ds+2\varepsilon\int_0^T(\widehat{Z}^n_s)^2ds+2\sup_{t\in[0,T]}|\widehat{Y}^n_s||\bar{A}^n_T|-\int_0^T 2\widehat{Y}^n_s\widehat{Z}_s^ndB_s,
	\end{align*}
	where $\varepsilon>0$ and $\kappa_\varepsilon=2(1+\bar{\sigma}^2)\kappa+\frac{(1+\bar{\sigma}^4)\kappa^2}{\varepsilon}$. By Proposition \ref{the1.3}, for any $\varepsilon'>0$, we obtain
	\begin{align*}
		\widehat{\mathbb{E}}\left(\int_0^T \widehat{Y}^n_s\widehat{Z}^n_sdB_s\right)^{\frac{\alpha}{2}}\leq &C\widehat{\mathbb{E}}\left(\int_0^T (\widehat{Y}_s^n)^2(\widehat{Z}_s^n)^2 ds\right)^{\frac{\alpha}{4}}\\
		\leq&C\left(\widehat{\mathbb{E}}\sup_{t\in[0,T]}|\widehat{Y}^n_t|^\alpha\right)^{1/2}\left(\widehat{\mathbb{E}}\left(\int_0^T |\widehat{Z}_s^n|^2ds\right)^{\frac{\alpha}{2}}\right)^{1/2}\\
		\leq &\frac{C}{4\varepsilon'}\widehat{\mathbb{E}}\sup_{t\in[0,T]}|\widehat{Y}^n_t|^\alpha+C\varepsilon'\widehat{\mathbb{E}}\left(\int_0^T |\widehat{Z}_s^n|^2ds\right)^{\frac{\alpha}{2}}.
	\end{align*}
	Choosing $\varepsilon$ and $\varepsilon'$ small enough, it is easy to check that
	\begin{displaymath}
		\widehat{\mathbb{E}}\left(\int_0^T (\widehat{Z}^n_s)^2ds\right)^{\frac{\alpha}{2}}\leq C\left\{\widehat{\mathbb{E}}\sup_{t\in[0,T]}|\widehat{Y}^n_t|^\alpha+\bigg(\widehat{\mathbb{E}}\sup_{t\in[0,T]}|\widehat{Y}^n_t|^\alpha\bigg)^{1/2}\left(\widehat{\mathbb{E}}|\bar{A}^n_T|^\alpha\right)^{1/2}\right\}.
	\end{displaymath}
	It follows from Lemma \ref{barY,barZ,barA} and Equation \eqref{e3} that $$\lim_{n\rightarrow\infty}\widehat{\mathbb{E}}\left(\int_0^T |\bar{Z}^n_s-Z_s^n|^2ds\right)^{\frac{\alpha}{2}}=0.$$ 
	Finally, we have
	\begin{displaymath}
		\begin{split}
			&\widehat{\mathbb{E}}\sup_{t\in[0,T]}|\widetilde{A}_t^n-A_t^n|^\alpha\\
			&\leq C\widehat{\mathbb{E}}\left[\sup_{t\in[0,T]}|\widehat{Y}^n_t|^\alpha+\left(\int_0^T|\widehat{f}^n_s| ds\right)^\alpha+\left(\int_0^T|\widehat{g}^n_s| ds\right)^\alpha
			+\sup_{t\in[0,T]}\left|\int_0^t \widehat{Z}^n_sdB_s\right|^\alpha\right]\\
			&\leq C\left\{\widehat{\mathbb{E}}\sup_{t\in[0,T]}|\widehat{Y}^n_t|^\alpha+\widehat{\mathbb{E}}\left(\int_0^T|\widehat{Z}^n_s|^2ds\right)^{\alpha/2}\right\}\\
			&\rightarrow 0,\qquad\textrm{ as }n\rightarrow \infty.
		\end{split}
	\end{displaymath}
	The proof is complete.

\section{Probabilistic representation of fully nonlinear PDEs with double obstacles}
\label{sec:PDE}
In this section, we establish the connection between fully nonlinear PDEs with double obstacles and doubly reflected $G$-BSDEs. To this end, we consider the doubly reflected $G$-BSDEs in a Markovian framework. For simplicity, we focus solely on doubly reflected BSDEs driven by one-dimensional $G$-Brownian motion. However, similar results apply to the multi-dimensional case.

%Let $b:[0,T]\times\mathbb{R}^d\rightarrow \mathbb{R}^d$, $l:[0,T]\times\mathbb{R}^d\rightarrow \mathbb{R}^{d\times d}$ and $\sigma:[0,T]\times\mathbb{R}^d\rightarrow \mathbb{R}^{d\times d}$ be continuous mappings, which are Lipschitz w.r.t. their second variable, uniformly w.r.t. $t\in[0,T]$.
For each $0\leq t\leq T$ and $\xi\in L_G^p(\Omega_t)$ where $p\geq 2$, let $\{X_s^{t,\xi}, t\leq s\leq T\}$ be the  solution of the following $G$-SDE:% driven by $G$-Brownian motion:
\begin{equation}\label{eq1.6}
	X_s^{t,\xi}=x+\int_t^s b(r,X_r^{t,\xi})dr+\int_t^s l(r,X_r^{t,\xi})d\langle B\rangle_r+\int_t^s \sigma(r,X_r^{t,\xi})dB_r.
\end{equation}
Consider the doubly reflected $G$-BSDE
\begin{align}
	\label{eqn:DRGBSDE_PDE}
	\begin{cases}
		Y_s^{t,x}=\xi^{t,x}+\int_s^T f^{t,x}(s,Y_s^{t,x},Z_s^{t,x})ds+\int_s^T g^{t,x}(s,Y_s^{t,x},Z_s^{t,x})d\langle B\rangle_s\\
		\hspace{4cm}-\int_s^T Z_s^{t,x} dB_s+(A_s^{t,x}-A_s^{t,x}), \hspace{1cm} t\leq s\leq T,\vspace{0.2cm}\\
		L^{t,x}_s\leq Y_s^{t,x}\leq U^{t,x}_s,\quad t\leq s\leq T, \vspace{0.2cm}\\
		(Y^{t,x}, A^{t,x}) \textrm{ satisfies the } \textmd{ASC}_{\alpha},
	\end{cases}
\end{align}
which is the doubly reflected $G$-BSDE \eqref{eqn:DRGBSDE} 
with parameters $(\xi^{t,x},f^{t,x},g^{t,x},L^{t,x}, U^{t,x})$ taking the following form:
\begin{align*}
\xi^{t,x}=\phi(X_T^{t,x}),\qquad L_s^{t,x}=h(s,X_s^{t,x}), \qquad U_s^{t,x}=h'(s,X_s^{t,x}),\\
    f^{t,x}(s,y,z)=f(s,X_s^{t,x},y,z), \qquad\text{and}\qquad g^{t,x}(s,y,z)=g(s,X_s^{t,x},y,z).
\end{align*}
The functions  $b,l,\sigma,h,h':[0,T]\times\mathbb{R}\rightarrow \mathbb{R}$, $\phi:\mathbb{R}\rightarrow \mathbb{R}$ and  $f,g:[0,T]\times\mathbb{R}^3\rightarrow \mathbb{R}$  are assumed to be deterministic and satisfy the following conditions:%, $l:[0,T]\times\mathbb{R}\rightarrow \mathbb{R}$, $\sigma:[0,T]\times\mathbb{R}\rightarrow \mathbb{R}$, $\phi:\mathbb{R}\rightarrow \mathbb{R}$, $f:[0,T]\times\mathbb{R}^3\rightarrow \mathbb{R}$ and $h,h':[0,T]\times\mathbb{R}\rightarrow \mathbb{R}$ are deterministic functions and satisfy the following conditions:
\begin{description}
	%\item[(A1)] $l_{ij}=l_{ji}$ and $g_{ij}=g_{ji}$ for $1\leq i,j\leq d$;
	\item[(Ai)] $b$, $l$, $\sigma$, $f$, $g$, $h$, $h'$ are continuous in $t\in [0,T]$;
	\item[(Aii)] There exist a positive integer $k$ and a constant $\kappa$ such that
	\begin{align*}
		&|\phi(x)-\phi(x')|\leq \kappa(1+|x|^k+|x'|^k)|x-x'|,\\
		&|f(t,x,y,z)-f(t,x',y',z')|\leq \kappa\Big[(1+|x|^k+|x'|^k)|x-x'|+|y-y'|+|z-z'|\Big],\\
        &|g(t,x,y,z)-g(t,x',y',z')|\leq \kappa\Big[(1+|x|^k+|x'|^k)|x-x'|+|y-y'|+|z-z'|\Big],\\
        &|b(t,x)-b(t,x')|+|l(t,x)-l(t,x')|+|\sigma(t,x)-\sigma(t,x')|+|h(t,x)-h(t,x')|\leq \kappa|x-x'|;
	\end{align*}
%\textcolor{red}{
%$$|h(t,x)-h(t,x')|+|h'(t,x)-h'(t,x')|\leq \kappa|x-x'|?$$
%}
	%	\item[(Aiii)]  $h$ is Lipschitz continuous w.r.t $x$ and bounded from above, $h(T,x)\leq \phi(x)$ for any $x\in\mathbb{R}^d$;
	\item[(Aiii)]$h'$ belongs to the space $C^{1,2}_{Lip}([0,T]\times \mathbb{R})$, $h(t,x)\leq h'(t,x)$ and $h(T,x)\leq \phi(x)\leq h'(T,x)$ for any $x\in\mathbb{R}$ and $t\in[0,T]$. The space $C^{1,2}_{Lip}([0,T]\times \mathbb{R})$ refers to the space of functions that are continuously differentiable in their first variable and twice continuously differentiable in their second variable, and both derivatives are uniformly Lipschitz continuous. 
\end{description}

%First, $\phi\in C(\mathbb{R}^d)$ and has linear growth at infinity. Second,
%\begin{displaymath}
%f:[0,T]\times\mathbb{R}^d\times\mathbb{R}\times\mathbb{R}^d\rightarrow \mathbb{R}
%\end{displaymath}
%is jointly continuous and for some $K>0$, $p\in\mathbb{N}$, satisfies
%\begin{align*}
%|f(t,x,0,0)|&\leq K(1+|x|^p)\\
%|f(t,x,y,z)-f(t,x,y',z')|&\leq K(|y-y'|+|z-z'|)
%\end{align*}
Under the above conditions, the solutions of the $G$-SDE \eqref{eq1.6} have the following properties; see Chapter V of \cite{peng2019nonlinear}.
\begin{proposition}[\cite{peng2019nonlinear}]\label{the1.17}
	Let $\xi,\xi'\in L_G^p(\Omega_t)$ where $p\geq 2$. Then we have, for each $\delta\in[0,T-t]$,
	\begin{align*}
		\widehat{\mathbb{E}}_t\sup_{s\in[t,t+\delta]}&|X_{s}^{t,\xi}-X_{s}^{t,\xi'}|^p\leq C|\xi-\xi'|^p,\qquad
		%\widehat{E}_t|X_{t+\delta}^{t,\xi}-X_{t+\delta}^{t,\xi'}|^p]&\leq C|\xi-\xi'|^p,\\
		\widehat{\mathbb{E}}_t|X_{t+\delta}^{t,\xi}|^p\leq C(1+|\xi|^p),\\
	&\quad	\text{and}\quad\widehat{\mathbb{E}}_t\sup_{s\in[t,t+\delta]}|X_s^{t,\xi}-\xi|^p\leq C(1+|\xi|^p)\delta^{p/2},
	\end{align*}
	where the constant $C$ depends on $\kappa,G,p$ and $T$.
\end{proposition}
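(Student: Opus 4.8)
The plan is to reproduce the classical stochastic-flow estimates of Peng~\cite{P10}, with the Burkholder--Davis--Gundy inequality of Proposition~\ref{the1.3} replacing its classical analogue and with Gronwall's lemma iterated by hand so as to remain within the sublinear-expectation calculus.

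\textbf{Difference estimate.} Put $\Delta_s:=X_s^{t,\xi}-X_s^{t,\xi'}$. Subtracting the two copies of \eqref{eq1.6} and using $(a+b+c+d)^p\le 4^{p-1}(|a|^p+\cdots+|d|^p)$, I would take $\sup_{s\in[t,t+u]}$ and then the conditional $G$-expectation $\hat{\mathbb{E}}_t$. On the $dr$ and $d\langle B\rangle_r$ terms, Hölder in time together with $\underline{\sigma}^2\,dr\le d\langle B\rangle_r\le\bar{\sigma}^2\,dr$ produces a factor $u^{p-1}$ and a time integral of $|b(r,X_r^{t,\xi})-b(r,X_r^{t,\xi'})|^p$, resp. of the $l$-difference; on the $dB$ term, Proposition~\ref{the1.3} converts the supremum into $\hat{\mathbb{E}}_t[(\int_t^{t+u}|\sigma(r,X_r^{t,\xi})-\sigma(r,X_r^{t,\xi'})|^2\,dr)^{p/2}]\le u^{p/2-1}\hat{\mathbb{E}}_t[\int_t^{t+u}|\sigma(r,X_r^{t,\xi})-\sigma(r,X_r^{t,\xi'})|^p\,dr]$. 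Feeding in the Lipschitz bounds from (Aii), and setting $\varphi(u):=\hat{\mathbb{E}}_t[\sup_{s\in[t,t+u]}|\Delta_s|^p]$, one obtains $\varphi(u)\le C|\xi-\xi'|^p+C\int_0^{u}\varphi(r)\,dr$, and the first estimate follows by iterating this inequality.

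\textbf{Moment and increment estimates.} The same three-term split, applied to $X_s^{t,\xi}$ itself after writing $b(r,x)=(b(r,x)-b(r,0))+b(r,0)$ and likewise for $l,\sigma$, and using that $b(\cdot,0),l(\cdot,0),\sigma(\cdot,0)$ are bounded on $[0,T]$ by (Ai), gives $\psi(u)\le C(1+|\xi|^p)+C\int_0^{u}\psi(r)\,dr$ with $\psi(u):=\hat{\mathbb{E}}_t[\sup_{s\in[t,t+u]}|X_s^{t,\xi}|^p]$; Gronwall then yields the second estimate, in fact with a supremum, which is stronger than the stated pointwise bound. For the third, I would estimate $X_s^{t,\xi}-\xi=\int_t^s b(r,X_r^{t,\xi})\,dr+\int_t^s l(r,X_r^{t,\xi})\,d\langle B\rangle_r+\int_t^s \sigma(r,X_r^{t,\xi})\,dB_r$ directly: Hölder gives $O(\delta^p)$ for the two finite-variation integrals and Proposition~\ref{the1.3} gives $O(\delta^{p/2})$ for the stochastic one; bounding the integrands by $C(1+|X_r^{t,\xi}|)$ and using the second estimate to control $\hat{\mathbb{E}}_t[\sup_{r\in[t,t+\delta]}|X_r^{t,\xi}|^p]\le C(1+|\xi|^p)$, together with $\delta^p\le T^{p/2}\delta^{p/2}$, delivers the claimed bound $C(1+|\xi|^p)\delta^{p/2}$.

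\textbf{Main obstacle.} The delicate point is the Gronwall step: under $\hat{\mathbb{E}}$ one cannot differentiate, so an inequality of the form $\varphi(u)\le a+b\int_0^u\varphi(r)\,dr$ must be iterated, and for this one first needs to know that $\varphi$ (and $\psi$) is finite, which in turn requires the a priori existence and $L_G^p$-integrability of $X^{t,\xi}$ itself. I would therefore first establish, by Picard iteration and contraction in $S_G^p$ on a short interval (using exactly the bounds above) and then patching, that \eqref{eq1.6} has a unique solution with $\hat{\mathbb{E}}_t[\sup_{s\in[t,T]}|X_s^{t,\xi}|^p]<\infty$, and only afterwards run the sharp Gronwall iteration. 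Everything else is bookkeeping of the constants $\bar{\sigma}$, $p$, $T$, $\kappa$ and the BDG constants $c_p,C_p$ from Proposition~\ref{the1.3}.
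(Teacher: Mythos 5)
The paper does not prove this proposition; it is imported verbatim from Chapter V of \cite{P10}, where the argument is exactly the one you outline (Picard iteration for well-posedness in $S_G^p$, then the three-term split with H\"older, the conditional Burkholder--Davis--Gundy inequality of Proposition \ref{the1.3}, and an iterated integral Gronwall inequality). Your proposal is correct and follows essentially that same route, including the right caveats about establishing a priori $L_G^p$-finiteness before running Gronwall and about pulling the $\Omega_t$-measurable factors $|\xi-\xi'|^p$, $|\xi|^p$ out of $\hat{\mathbb{E}}_t$.
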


Now define
\begin{equation}\label{eq1.8}
	u(t,x):=Y_t^{t,x},\quad (t,x)\in[0,T]\times\mathbb{R},
\end{equation}
where $Y^{t,x}$ is the first component of the solution to the doubly reflected $G$-BSDE \eqref{eqn:DRGBSDE_PDE}. Our first observation is that $u$ is a deterministic and continuous function. %It is important to note that $u(t,x)$ is a deterministic function. We claim that $u$ is a continuous function. For simplicity, we only consider the case where $g=0$ in the next three lemmas. The results still hold for the other cases. %We have the following estimates. First, we discuss the continuity of $u(t,x)$ w.r.t. $x$ and $t$.

\begin{lemma}\label{l1}
	For any fixed $t\in[0,T]$, $u$ is a continuous function in $x$.
\end{lemma}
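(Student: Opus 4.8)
The plan is to read off a local H\"{o}lder modulus of continuity for $u(t,\cdot)$ from the a priori estimate of Proposition \ref{a prior estimate}, with the differences of the Markovian data controlled by $|x-x'|$ through the stability estimates for the forward flow in Proposition \ref{the1.17}. First I would note that, under (Ai)--(Aiii), for every $(t,x)$ the data $(\xi^{t,x},f^{t,x},L^{t,x},U^{t,x})$ satisfy (A1)--(A4): the Lipschitz and polynomial-growth bounds of (Aii) together with the moment estimates of Proposition \ref{the1.17} put $\xi^{t,x}$, $f^{t,x}(\cdot,0,0)$, $L^{t,x}$ and $U^{t,x}$ in the required spaces for $\beta$ as large as one likes, and applying $G$-It\^{o}'s formula to $h'(\cdot,X^{t,x}_\cdot)$ (using $h'\in C^{1,2}_{Lip}$) exhibits $U^{t,x}$ as a generalized $G$-It\^{o} process, so one may take $I=U^{t,x}$ in (A3); hence Theorem \ref{main} provides the unique solution $(Y^{t,x},Z^{t,x},A^{t,x})$.

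Fix $t$, $R>0$ and $x,x'\in[-R,R]$, and set $\hat{Y}_t=Y^{t,x}_t-Y^{t,x'}_t$, $\hat{\xi}=\phi(X^{t,x}_T)-\phi(X^{t,x'}_T)$, $\hat{L}_s=h(s,X^{t,x}_s)-h(s,X^{t,x'}_s)$, $\hat{U}_s=h'(s,X^{t,x}_s)-h'(s,X^{t,x'}_s)$ and $\hat{\lambda}_s=|f(s,X^{t,x}_s,Y^{t,x'}_s,Z^{t,x'}_s)-f(s,X^{t,x'}_s,Y^{t,x'}_s,Z^{t,x'}_s)|$. Applying Proposition \ref{a prior estimate} with $\alpha=2$, so that the factor carrying the exponent $\tfrac{\alpha-2}{2}$ becomes $1$, gives
\begin{align*}
|\hat{Y}_t|^2\le{}&\liminf_{n\to\infty}C\Big(\sum_{i=1}^2\big(\hat{\mathbb{E}}_t[|A^{i,n,+}_T|^2]+\hat{\mathbb{E}}_t[|A^{i,n,-}_T|^2]\big)\Big)^{\frac12}\big(\hat{\mathbb{E}}_t[\sup_{s}|\hat{L}_s|^2]+\hat{\mathbb{E}}_t[\sup_{s}|\hat{U}_s|^2]\big)^{\frac12}\\
&+C\,\hat{\mathbb{E}}_t\big[|\hat{\xi}|^2+\int_t^T|\hat{\lambda}_s|^2\,ds\big].
\end{align*}
Since $X^{t,x},X^{t,x'}$ and every process built from them on $[t,T]$ depend only on the increments of $B$ after time $t$, all the conditional $G$-expectations above coincide with the corresponding deterministic $G$-expectations $\hat{\mathbb{E}}[\cdot]$; in particular the right-hand side is a constant, and since $Y^{t,x}_t$ is both $\mathcal{F}_t$-measurable and a functional of the increments of $B$ after $t$ (hence constant q.s.), the left-hand side equals $|u(t,x)-u(t,x')|^2$.

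Next I would bound each term on the right. For the obstacles, $|\hat{L}_s|\le\kappa|X^{t,x}_s-X^{t,x'}_s|$ and $|\hat{U}_s|\le C|X^{t,x}_s-X^{t,x'}_s|$ by the Lipschitz continuity of $h$ and of $h'$ (from (Aii) and (Aiii)), so the first estimate of Proposition \ref{the1.17} gives $\hat{\mathbb{E}}[\sup_s|\hat{L}_s|^2]+\hat{\mathbb{E}}[\sup_s|\hat{U}_s|^2]\le C|x-x'|^2$. For $\hat{\xi}$ and $\hat{\lambda}$, the polynomial-growth Lipschitz bounds of (Aii) produce the factor $(1+|X^{t,x}_s|^k+|X^{t,x'}_s|^k)|X^{t,x}_s-X^{t,x'}_s|$, and combining the moment bounds $\hat{\mathbb{E}}[\sup_s|X^{t,x}_s|^p]\le C_R$ and $\hat{\mathbb{E}}[\sup_s|X^{t,x}_s-X^{t,x'}_s|^p]\le C|x-x'|^p$ (Proposition \ref{the1.17}, any $p\ge2$) with H\"{o}lder's inequality yields $\hat{\mathbb{E}}[|\hat{\xi}|^2+\int_t^T|\hat{\lambda}_s|^2\,ds]\le C_R|x-x'|^2$. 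Finally, for the $A^{i,n,\pm}_T$ factors I would use the bounds $\hat{\mathbb{E}}[|A^{i,n,\pm}_T|^2]\le C_R$, uniform in $n$, coming from Lemma \ref{A^{n,m,+},Z^{n,m},K^{n,m}} (with $m=n$), Lemma \ref{barY,barZ,barA} and the convergence statements of Section 4, once one checks that the constants there depend only on bounds on the data --- in particular on $\hat{\mathbb{E}}[\sup_s|Y^{t,x}_s|^2]$, on $\hat{\mathbb{E}}[\int_0^T|f(s,X^{t,x}_s,0,0)|^2\,ds]$, and on the $S_G^\beta$-norms of the ingredients of (A3) --- all of which, by Proposition \ref{the1.17} and the growth conditions (Aii), are bounded uniformly for $x\in[-R,R]$. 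Combining the three bounds, $|u(t,x)-u(t,x')|^2\le C_R(|x-x'|+|x-x'|^2)$, so $u(t,\cdot)$ is locally $\tfrac12$-H\"{o}lder continuous, hence continuous.

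The main obstacle is this last bookkeeping step: one must make sure that the constant bounding $\hat{\mathbb{E}}[|A^{i,n,\pm}_T|^2]$ --- which traces back through Lemma \ref{est-Y}, Lemma \ref{A^{n,m,+},Z^{n,m},K^{n,m}}, Lemma \ref{barY,barZ,barA} and the limiting procedure of Section 4 --- is uniform both in the penalization index $n$ and in $x$ over the ball $[-R,R]$, which forces one to re-examine those estimates and verify that every data-dependent quantity entering them is locally bounded in $x$. The only other point that needs care, and is standard in the $G$-framework, is the identification of $\hat{\mathbb{E}}_t[\Phi]$ with $\hat{\mathbb{E}}[\Phi]$ for functionals $\Phi$ of the flow started from the deterministic point $x$ at time $t$; it is this identification that turns the pathwise a priori estimate into a deterministic modulus of continuity.
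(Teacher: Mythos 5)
Your proposal is correct and follows essentially the same route as the paper: the paper's proof consists precisely of invoking Proposition \ref{a prior estimate} together with Proposition \ref{the1.17} to obtain $|u(t,x)-u(t,x')|^2\leq C(|x-x'|^2+|x-x'|)$ with $C$ depending on $T,k,\kappa,\underline{\sigma},x,x'$, which is exactly the bound you derive. Your write-up merely makes explicit the bookkeeping (the choice $\alpha=2$, the identification of $\hat{\mathbb{E}}_t$ with $\hat{\mathbb{E}}$, and the local uniformity in $x$ of the penalization bounds) that the paper leaves to the reader.
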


\begin{proof}
	By Proposition \ref{a prior estimate} and Proposition \ref{the1.17}, there exists a constant $C$ depending on $T,k,\kappa,G,x,x'$, such that for any $t\in[0,T]$ and  $x,x'\in\mathbb{R}$,
	\begin{displaymath}
		|u(t,x)-u(t,x')|^2\leq C(|x-x'|^2+|x-x'|).
	\end{displaymath}
	This completes the proof.
\end{proof}

\begin{lemma}\label{l2}
	For any fixed $x\in\mathbb{R}$, $u$ is continuous in $t$.
\end{lemma}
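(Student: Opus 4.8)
\textbf{Proof proposal for Lemma \ref{l2}.}

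The plan is to fix $x\in\mathbb{R}$ and two times $t<t'$ in $[0,T]$, and estimate $|u(t,x)-u(t',x)|$ by splicing the flow of the forward $G$-SDE. The natural intermediate object is $u(t',X^{t,x}_{t'})=Y^{t',X^{t,x}_{t'}}_{t'}$. By the flow property of \eqref{eq1.6} we have $X^{t,x}_s=X^{t',X^{t,x}_{t'}}_s$ for $s\ge t'$, and by uniqueness of the doubly reflected $G$-BSDE (Theorem \ref{main}) together with the Markovian structure of the parameters, the solution started at time $t$ coincides on $[t',T]$ with the one started at time $t'$ from the random initial point $X^{t,x}_{t'}$; hence $Y^{t,x}_{t'}=u(t',X^{t,x}_{t'})$. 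So I would write
\[
|u(t,x)-u(t',x)|\le |u(t,x)-Y^{t,x}_{t'}|+\hat{\mathbb{E}}[|u(t',X^{t,x}_{t'})-u(t',x)|^2]^{1/2},
\]
(the deterministic quantity $u(t,x)=Y^{t,x}_t$ being compared with $Y^{t,x}_{t'}$, and then $Y^{t,x}_{t'}=u(t',X^{t,x}_{t'})$ being compared with $u(t',x)$).

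For the second term, the continuity estimate in $x$ obtained in Lemma \ref{l1} gives $|u(t',x)-u(t',x')|^2\le C(x,x')(|x-x'|^2+|x-x'|)$; combined with the third estimate of Proposition \ref{the1.17}, $\hat{\mathbb{E}}[\sup_{s\in[t,t']}|X^{t,x}_s-x|^p]\le C(1+|x|^p)|t'-t|^{p/2}$, and a careful handling of the $x$-dependence of the constant (e.g.\ first bounding on the event where $X^{t,x}_{t'}$ stays in a fixed large ball and using the moment estimate on the complement), this term tends to $0$ as $t'\downarrow t$. For the first term, I would apply the a priori estimate of Proposition \ref{a prior estimate} to the two sets of data $(\xi^{t,x},f^{t,x},L^{t,x},U^{t,x})$ restricted to the interval $[t',T]$ versus themselves — or, more directly, apply the $G$-It\^o estimate underlying that proposition on $[t',T]$ — to get $|Y^{t,x}_{t'}-Y^{t,x}_{t}|$ controlled. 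Actually the cleanest route for $|u(t,x)-Y^{t,x}_{t'}|$ is to use the representation $Y^{t,x}_t=Y^{t,x}_{t'}+\int_t^{t'}f^{t,x}(s,Y^{t,x}_s,Z^{t,x}_s)ds-\int_t^{t'}Z^{t,x}_sdB_s+(A^{t,x}_{t'}-A^{t,x}_t)$ together with the obstacle constraint $L^{t,x}_s\le Y^{t,x}_s\le U^{t,x}_s$ and the regularity of $L^{t,x},U^{t,x}$ coming from (Ai)–(Aiii) and Proposition \ref{the1.17}; this pins $Y^{t,x}_{t'}$ between $h(t',X^{t,x}_{t'})$ and $h'(t',X^{t,x}_{t'})$, both close to $h(t,x)$ and $h'(t,x)$ respectively for $t'$ near $t$, while the finite-variation and martingale parts over $[t,t']$ are small in $L^2_G$ by the uniform estimates on $A^{t,x},Z^{t,x}$.

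The main obstacle I anticipate is the uniformity in the constants: Lemma \ref{l1} only gives local (in $x$) continuity with a constant blowing up polynomially in $|x|$ and $|x'|$, so controlling $\hat{\mathbb{E}}[|u(t',X^{t,x}_{t'})-u(t',x)|^2]$ requires combining the polynomial growth of that constant with the moment bounds $\hat{\mathbb{E}}_t[|X^{t,x}_{t+\delta}|^p]\le C(1+|x|^p)$ and the $\delta^{p/2}$ smallness, which forces choosing $p$ large enough (allowed since $\xi^{t,x}\in L^p_G$ for all $p\ge 2$ under the polynomial growth assumption on $\phi$) and doing a Cauchy–Schwarz or Young split. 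Subtracting, the lemma follows by letting $t'\to t$ (the one-sided argument $t'\downarrow t$ and $t'\uparrow t$ being symmetric after relabeling), which completes the proof.
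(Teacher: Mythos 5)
Your architecture is genuinely different from the paper's, and it leans on two ingredients that are not available. The paper never splits the time interval at $t'$: for each $t$ it extends the data backwards to $[0,t]$ by freezing ($X^{t,x}_s:=x$, $Y^{t,x}_s:=Y^{t,x}_t$, $Z^{t,x}_s:=0$, $A^{t,x}_s:=0$, $L^{t,x}_s:=h(t,x)$, $U^{t,x}_s:=h'(t,x)$ for $s\le t$, generator $\mathbf{1}_{[t,T]}(s)f$), notes that the frozen triples are still solutions of doubly reflected $G$-BSDEs on all of $[0,T]$, and then compares $u(t_1,x)=Y^{t_1,x}_0$ and $u(t_2,x)=Y^{t_2,x}_0$ at the single deterministic time $0$ through the stability estimate of Proposition \ref{a prior estimate}; everything reduces to differences of obstacles, terminal values and generators, which Proposition \ref{the1.17} controls. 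The point of that construction is precisely to bypass the two steps on which your argument rests.

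The first missing ingredient is the identity $Y^{t,x}_{t'}=u(t',X^{t,x}_{t'})$. This is a Markov/flow property for doubly reflected $G$-BSDEs with a \emph{random} initial condition; it is nowhere stated or proved in the paper, and in the $G$-framework it is not a formal consequence of uniqueness — one has to approximate $X^{t,x}_{t'}$ by simple random variables and pass to the limit using Lemma \ref{l1} and the a priori estimates. That is real, unwritten work. The second problem is your treatment of $\hat{\mathbb{E}}[|Y^{t,x}_t-Y^{t,x}_{t'}|^2]$: in the decomposition $Y^{t,x}_t-Y^{t,x}_{t'}=\int_t^{t'}f\,ds-\int_t^{t'}Z\,dB+(A^{t,x}_{t'}-A^{t,x}_t)$ the first two pieces are small as you say, but the increment $A^{t,x}_{t'}-A^{t,x}_t$ is \emph{not} "small in $L^2_G$ by the uniform estimates on $A^{t,x}$": the bound $\hat{\mathbb{E}}[\sup_t|A^{t,x}_t|^\alpha]\le C$ controls the total size of $A$, not its modulus of continuity, and $A$ may in principle accumulate mass immediately after $t$ (e.g.\ when $u(t,x)$ sits on an obstacle); the "pinning between the obstacles" remark only places $Y^{t,x}_{t'}$ in an interval of width $h'-h$, which is not small. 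This particular step is fixable for fixed $(t,x)$ — since $Y^{t,x}\in S_G^\alpha(t,T)$ (and likewise $A^{t,x}$), approximation by cylinder processes in $S_G^0$ gives $\hat{\mathbb{E}}[|Y^{t,x}_{t'}-Y^{t,x}_t|^2]\to0$ as $t'\downarrow t$ directly, with no need for the equation — but that is not the justification you gave. Your handling of $\hat{\mathbb{E}}[|u(t',X^{t,x}_{t'})-u(t',x)|^2]$ (localization on a ball plus the polynomial growth of the constant in Lemma \ref{l1} and the moment bounds of Proposition \ref{the1.17}) is sound. As written, though, the proof has a genuine gap at the flow identity, and I would recommend switching to the paper's freezing argument, which only uses Proposition \ref{a prior estimate}.
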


\begin{proof}
	For any fixed $t\in[0,T]$, we define, for $0\leq s\leq t$,
	$$X_s^{t,x}:=x,\quad Y_s^{t,x}:=Y_t^{t,x},\quad Z_s^{t,x}:= 0,\quad A_s^{t,x}:=0,$$ 
	$$U^{t,x}_s:=h'(t,x) \quad \text{and} \quad L^{t,x}_s:=h(t,x).$$  Obviously, $(Y^{t,x}_s,Z^{t,x}_s,A^{t,x}_s)_{s\in[0,T]}$ is the solution to the doubly reflected $G$-BSDE with parameters $(\phi(X^{t,x}_T), \widetilde{f}^{t,x},\widetilde{g}^{t,x}, L^{t,x}, U^{t,x})$, where 
    \begin{align*}
    \widetilde{f}^{t,x}(s,y,z)=f(s,X^{t,x}_s,y,z)\mathbbm{1}_{[t,T]}(s) \quad \text{and} \quad  \widetilde{g}^{t,x}(s,y,z)=g(s,X^{t,x}_s,y,z)\mathbbm{1}_{[t,T]}(s).
    \end{align*}
    For each fixed $x\in\mathbb{R}$, suppose that $0\leq t_1\leq t_2\leq T$, by Proposition \ref{a prior estimate}, there exists a constant $C$ depending on $T,k,\kappa,G,x$, such that
	\begin{align*}
		&\hspace{-1cm}|u(t_1,x)-u(t_2,x)|^2=|Y^{t_1,x}_0-Y^{t_2,x}_0|^2\\
		\leq &C\bigg(\mathbb{E}\sup_{t\in[0,T]}|L^{t_1,x}_t-L^{t_2,x}_t|^2+\mathbb{E}\sup_{t\in[0,T]}|U^{t_1,x}_t-U^{t_2,x}_t|^2\bigg)^{\frac{1}{2}}+C\mathbb{E}|\phi(X_T^{t_1,x})-\phi(X_T^{t_2,x})|^2\\
		&+C\mathbb{E}\left[\int_0^T \Big|\widetilde{f}^{t_1,x}(s,X^{t_1,x}_s,Y^{t_2,x}_s,Z^{t_2,x}_s)-\widetilde{f}^{t_2,x}(s,X^{t_2,x}_s,Y^{t_2,x}_s,Z^{t_2,x}_s)\Big|^2ds\right]\\
        &+C\mathbb{E}\left[\int_0^T \Big|\widetilde{g}^{t_1,x}(s,X^{t_1,x}_s,Y^{t_2,x}_s,Z^{t_2,x}_s)-\widetilde{g}^{t_2,x}(s,X^{t_2,x}_s,Y^{t_2,x}_s,Z^{t_2,x}_s)\Big|^2ds\right].
	\end{align*}
	Note that
	\begin{align*}
		&\hspace{-0.5cm}\sup_{t\in[0,T]}|L^{t_1,x}_t-L^{t_2,x}_t|\\
		\leq&|h(t_1,x)-h(t_2,x)|+\sup_{t\in[t_1,t_2]}|h(t,X_t^{t_1,x})-h(t_2,x)|+\sup_{t\in[t_2,T]}|h(t,X^{t_1,x}_t)-h(t,X^{t_2,x}_t)|\\
		\leq &2\sup_{t\in[t_1,t_2]}|h(t,x)-h(t_2,x)|+\sup_{t\in[t_1,t_2]}|h(t,X^{t_1,x}_t)-h(t,x)|+\sup_{t\in[t_2,T]}\kappa|X^{t_1,x}_t-X^{t_2,x}_t|\\
		\leq & 2\sup_{t\in[t_1,t_2]}|h(t,x)-h(t_2,x)|+\sup_{t\in[t_1,t_2]}\kappa|X^{t_1,x}_t-x|+\sup_{t\in[t_2,T]}\kappa|X^{t_2,X^{t_1,x}_{t_2}}_t-X^{t_2,x}_t|.
	\end{align*}
	Letting $\delta=t_2-t_1$, by Proposition \ref{the1.17}, we have
	\begin{displaymath}
		\lim_{\delta\rightarrow 0}\mathbb{E}\sup_{t\in[0,T]}|L^{t_1,x}_t-L^{t_2,x}_t|^2=0.
	\end{displaymath}
	A similar analysis yields that
	\begin{align*}
		\lim_{\delta\rightarrow 0}\mathbb{E}\sup_{t\in[0,T]}|U^{t_1,x}_t-U^{t_2,x}_t|^2=0\quad\text{and}\quad
		\lim_{\delta\rightarrow 0}\mathbb{E}|\phi(X^{t_1,x}_T)-\phi(X^{t_2,x}_T)|^2=0.
	\end{align*}
	By simple calculation, we obtain that
	\begin{align*}
		&\hspace{-0.5cm}\int_0^T \Big|\widetilde{f}^{t_1,x}(s,X^{t_1,x}_s,Y^{t_2,x}_s,Z^{t_2,x}_s)-\widetilde{f}^{t_2,x}(s,X^{t_2,x}_s,Y^{t_2,x}_s,Z^{t_2,x}_s)\Big|^2ds\\
		\leq &C \int_{t_1}^{t_2}\Big(|f(s,0,0,0)|^2+|X_s^{t_1,x}|^{2k+2}+|X_s^{t_1,x}|^2+|Y_s^{t_2,x}|^2+|Z_s^{t_2,x}|^2\Big)ds\\
		&+C\int_{t_2}^T \Big(1+|X_s^{t_1,x}|^k+|X_s^{t_2,x}|^k\Big)^2|X_s^{t_1,x}-X^{t_2,x}_s|^2ds.
	\end{align*}
	Noting that the process $\{\int_0^t |Z_s^{t_2,x}|^2 ds\}_{t\in[0,T]}\in S_G^1(0,T)$, applying Propositions \ref{uniformcontinuous} and \ref{the1.17}, we have
	\begin{displaymath}
		\lim_{\delta\rightarrow 0}\mathbb{E}\left[\int_0^T \Big|\widetilde{f}^{t_1,x}(s,X^{t_1,x}_s,Y^{t_2,x}_s,Z^{t_2,x}_s)-\widetilde{f}^{t_2,x}(s,X^{t_2,x}_s,Y^{t_2,x}_s,Z^{t_2,x}_s)\Big|^2ds\right]=0.
	\end{displaymath}
    Similarly, we have
    \begin{displaymath}
		\lim_{\delta\rightarrow 0}\mathbb{E}\left[\int_0^T \Big|\widetilde{g}^{t_1,x}(s,X^{t_1,x}_s,Y^{t_2,x}_s,Z^{t_2,x}_s)-\widetilde{g}^{t_2,x}(s,X^{t_2,x}_s,Y^{t_2,x}_s,Z^{t_2,x}_s)\Big|^2ds\right]=0.
	\end{displaymath}
	All the above analyses imply that $u$ is continuous in $t$. The proof is complete.
\end{proof}

Our main result in this section is that the function $u$ defined in \eqref{eq1.8} is the solution to the following fully nonlinear obstacle problem:% Roughly speaking, a solution of the obstacle problem is a function $u:[0,T]\times\mathbb{R}^d\rightarrow\mathbb{R}$ which satisfies:
\begin{equation}\label{eq1.7}
	\begin{cases}
		\max\Big(u-h',\,\min\big(-\partial_t u(t,x)-F(D_x^2 u,D_x u,u,x,t),u-h\big)\Big)=0,
		&\\%(t,x)\in(0,T)\times \mathbb{R}
		u(T,x)=\phi(x),  &%x\in\mathbb{R},
	\end{cases}
\end{equation}
where
\begin{align*}
	F(D_x^2 u,D_x u,u,x,t)=&G(H(D_x^2 u,D_x u,u,x,t))+ b(t,x)D_x u+f(t,x,u,\sigma(t,x)D_x u),\\
	H(D_x^2 u,D_x u,u,x,t)=&\sigma^2(t,x) D_x^2u+2l(t,x)D_xu+2g(t,x,u,\sigma(t,x)D_x u).
	%&+2g_{ij}(t,x,u,\langle \sigma_1(t,x),D_x u\rangle,\cdots,\langle \sigma_d(t,x),D_x u\rangle).
\end{align*}
Note that by Lemma \ref{l1} and Lemma \ref{l2}, one only obtains that $u$ is a continuous function but it may be not differentiable. The notion of viscosity solutions was introduced by P.-L. Lions and M. Crandall independently in the 1980s, primarily to address issues arising in nonlinear PDEs, such as the lack of classical solutions due to singularities or non-smoothness. Viscosity solutions have since become a fundamental tool in the analysis of various types of nonlinear PDEs, including Hamilton-Jacobi equations, obstacle problems, and certain types of evolution equations. We begin by providing the definition of a viscosity solution to \eqref{eq1.7}, which relies on the concepts of sub-jets and super-jets. Further elaboration can be found in the paper \cite{crandall1992user}. %We need to consider solutions of the above PDE in the viscosity sense. The best candidate to define the notion of viscosity solution is by using the language of sub- and super-jets (see \cite{crandall1992user}).% In the following, let $\mathbb{S}_d$ denote the set of all $d\times d$ symmetric  matrices.

\begin{definition}
	Let $u\in C((0,T)\times\mathbb{R})$ and $(t,x)\in(0,T)\times \mathbb{R}$. We denote $\mathcal{P}^{2,+} u(t,x)$ as the ``parabolic superjet" of $u$ at $(t,x)$, which comprises triples $(p,q,X)\in\mathbb{R}^3$ such that
	\begin{align*}
		u(s,y)\leq u(t,x)+p(s-t)+ q(y-x)+\frac{1}{2} X(y-x)^2+o(|s-t|+|y-x|^2).
	\end{align*}
	Similarly, we define $\mathcal{P}^{2,-} u(t,x)$  as the ``parabolic subjet" of $u$ at $(t,x)$ by $$\mathcal{P}^{2,-} u(t,x):=-\mathcal{P}^{2,+}(- u)(t,x).$$
\end{definition}

%Then, we give the definition of the viscosity solution of the obstacle problem \eqref{eq1.7}.

\begin{definition}
	Let $u$ be a continuous function defined on $[0,T]\times \mathbb{R}$.
	\begin{enumerate}[label=(\roman*)]
	\item  It is called a viscosity subsolution  of \eqref{eq1.7} if $u(T,x)\leq \phi(x)$ for $x\in\mathbb{R}$, and at any point $(t,x)\in(0,T)\times\mathbb{R}$, for any $(p,q,X)\in\mathcal{P}^{2,+}u(t,x)$,
	\begin{displaymath}
		\max\Big(u(t,x)-h'(t,x), \,\min\big(u(t,x)-h(t,x), -p-F(X,q,u(t,x),x,t)\big)\Big)\leq 0;
	\end{displaymath}
	\item  It is called a viscosity supersolution of \eqref{eq1.7} if $u(T,x)\geq\phi(x)$ for $x\in\mathbb{R}$, and at any point $(t,x)\in(0,T)\times\mathbb{R}$, for any $(p,q,X)\in\mathcal{P}^{2,-}u(t,x)$,
	\begin{displaymath}
		\max\Big(u(t,x)-h'(t,x),\,\min\big(u(t,x)-h(t,x), -p-F(X,q,u(t,x),x,t)\big)\Big)\geq 0;
	\end{displaymath}
	\item  It is called a viscosity solution of \eqref{eq1.7} if it is both a viscosity subsolution and supersolution.
	\end{enumerate}
\end{definition}

Denote by $\{(Y_s^{n,t,x},Z_s^{n,t,x},A_s^{n,t,x})\}_{s\in[t,T]}$ the solution of the following penalized reflected $G$-BSDEs:%We will use the approximation of the reflected $G$-BSDE  by penalization. For each $(t,x)\in[0,T]\times\mathbb{R}^d$, $n\in\mathbb{N}$, let $\{(Y_s^{n,t,x},Z_s^{n,t,x},K_s^{n,t,x}), t\leq s\leq T\}$ denote the solution of the $G$-BSDE:
\begin{align*}
	\begin{cases}
		Y_s^{n,t,x}= \phi(X_T^{t,x})+\int_s^T f(r,X_r^{t,x},Y_r^{n,t,x},Z_r^{n,t,x})dr+\int_s^T g(r,X_r^{t,x},Y_r^{n,t,x},Z_r^{n,t,x})d\langle B\rangle_r\\
		\ \ \ \ \ \ \ \ \ \ \ -n\int_s^T (Y_r^{n,t,x}-h'(r,X_r^{t,x}))^+dr-\int_s^T Z_r^{n,t,x}dB_r+(A_T^{n,t,x}-A_s^{n,t,x}),\hspace{0.5cm} t\leq s\leq T,\vspace{0.2cm}\\
		Y_s^{n,t,x,}\geq h(s,X^{t,x}_s), \hspace{0.5cm} t\leq s\leq T,\vspace{0.2cm}\\
		\{\int_t^s (h(r,X^{t,x}_r)-Y^{n,t,x}_r)dA^{n,t,x}_r\}_{s\in[t,T]} \textrm{ is a non-increasing $G$-martingale}.
\end{cases}
\end{align*}
By Theorem \ref{main}, $Y^{t,x}$ is the limit of $Y^{n,t,x}$ as $n$ goes to infinity. We define
\begin{displaymath}
	u_n(t,x):=Y_t^{n,t,x}, \quad (t,x)\in[0,T]\times \mathbb{R}.
\end{displaymath}
By Theorem 6.7 in \cite{li2018reflected}, $u_n$ is the viscosity solution of the following parabolic PDE:
\begin{equation}\label{eq1.9}
	\begin{cases}
		\min\Big(u_n(t,x)-h(t,x),\,-\partial_t u_n-F_n(D_x^2 u_n,D_x u_n,u_n,x,t)\Big)=0, & (t,x)\in[0,T]\times\mathbb{R}\vspace{0.2cm}\\
		u_n(T,x)=\phi(x), &  x\in\mathbb{R},
\end{cases}\end{equation}
where
\begin{displaymath}
	F_n(D_x^2 u,D_x u,u,x,t)=F(D_x^2 u,D_x u,u,x,t)-n(u(t,x)-h'(t,x))^+.
\end{displaymath}

\begin{theorem}\label{the1.21}
	Under Conditions (Ai)-(Aiii), the function $u$ defined in \eqref{eq1.8} is the unique viscosity solution of the double obstacle problem \eqref{eq1.7}.
\end{theorem}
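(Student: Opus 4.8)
The plan is to realize $u$ as the \emph{locally uniform} limit of the penalized value functions $u_n$ and then pass to the limit in the viscosity sense; this is exactly where the monotonicity of the approximation pays off. Since $\bar{Y}^n$ is non-increasing in $n$ and converges to $Y$ (Theorem \ref{main} and the construction of Section 4), in the Markovian framework $u_n(t,x)=Y_t^{n,t,x}$ decreases to $u(t,x)=Y_t^{t,x}$ for every $(t,x)$. By Lemmas \ref{l1} and \ref{l2} the limit $u$ is continuous, and each $u_n$ is continuous as well (it is the value function of a single lower-obstacle reflected $G$-BSDE, so the analogues of Lemmas \ref{l1}--\ref{l2} apply, consistent with Theorem 6.7 in \cite{LPSH}); hence by Dini's theorem $u_n\to u$ uniformly on compact subsets of $[0,T]\times\mathbb{R}$. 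Moreover, evaluating $L_s^{t,x}\le Y_s^{t,x}\le U_s^{t,x}$ at $s=t$ gives $h(t,x)\le u(t,x)\le h'(t,x)$ for all $(t,x)$, and $u(T,x)=Y_T^{T,x}=\phi(x)$. It then remains to verify the viscosity sub- and supersolution inequalities at interior points $(t,x)\in(0,T)\times\mathbb{R}$.

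For the subsolution property, take $(p,q,X)\in\mathcal{P}^{2,+}u(t,x)$. Since $u\le h'$ already, it suffices to prove $\min\big(u(t,x)-h(t,x),-p-F(X,q,u(t,x),x,t)\big)\le0$, which is automatic when $u(t,x)=h(t,x)$; so assume $u(t,x)>h(t,x)$. By the stability of semijets under locally uniform convergence (\cite{CIL}, Lemma 6.1) there are $(t_n,x_n)\to(t,x)$ and $(p_n,q_n,X_n)\in\mathcal{P}^{2,+}u_n(t_n,x_n)$ with $(p_n,q_n,X_n)\to(p,q,X)$ and $u_n(t_n,x_n)\to u(t,x)$. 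For $n$ large, $u_n(t_n,x_n)>h(t_n,x_n)$ by continuity, so the subsolution inequality for $u_n$ in \eqref{eq1.9} forces $-p_n-F(X_n,q_n,u_n(t_n,x_n),x_n,t_n)+n(u_n(t_n,x_n)-h'(t_n,x_n))^+\le 0$; as the penalization term is nonnegative, $-p_n-F(X_n,q_n,u_n(t_n,x_n),x_n,t_n)\le0$, and letting $n\to\infty$ (using continuity of $F$, which holds since $G$, $b$, $l$, $\sigma$, $f$ are continuous) yields $-p-F(X,q,u(t,x),x,t)\le0$.

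For the supersolution property, take $(p,q,X)\in\mathcal{P}^{2,-}u(t,x)$. Since $u\ge h$, the inequality $\max\big(u(t,x)-h'(t,x),\min(u(t,x)-h(t,x),-p-F(X,q,u(t,x),x,t))\big)\ge0$ is trivial when $u(t,x)=h'(t,x)$, so assume $u(t,x)<h'(t,x)$; then it suffices to show $-p-F(X,q,u(t,x),x,t)\ge0$. Applying the stability result again we get $(t_n,x_n)\to(t,x)$ and $(p_n,q_n,X_n)\in\mathcal{P}^{2,-}u_n(t_n,x_n)$ converging to $(p,q,X)$ with $u_n(t_n,x_n)\to u(t,x)$; for $n$ large, $u_n(t_n,x_n)<h'(t_n,x_n)$, so the penalization term $(u_n(t_n,x_n)-h'(t_n,x_n))^+$ vanishes and the supersolution inequality for $u_n$ reads $-p_n-F(X_n,q_n,u_n(t_n,x_n),x_n,t_n)\ge0$. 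Passing to the limit gives $-p-F(X,q,u(t,x),x,t)\ge0$, and together with the two trivial cases this shows $u$ is a viscosity solution of \eqref{eq1.7}.

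Uniqueness will follow from a comparison principle for \eqref{eq1.7}: if $v_1$ is a viscosity subsolution and $v_2$ a viscosity supersolution, both with polynomial growth and $v_1(T,\cdot)\le\phi\le v_2(T,\cdot)$, then $v_1\le v_2$; I expect this to be the main obstacle. The argument would double the variables, add a penalization $\varepsilon^{-1}|x-y|^2$ together with a growth-correcting perturbation such as $\eta e^{\lambda(T-t)}(1+|x|^{2m})$ to localize the maximum, and apply the Crandall--Ishii lemma to produce matching second-order jets; the $G$-nonlinearity enters only through the monotone continuous map $G$, so the comparison of the $F$-terms is standard. The new ingredient relative to the single-obstacle case of \cite{LPSH} is the upper obstacle: one uses that $h'\in C^{1,2}_{Lip}([0,T]\times\mathbb{R})$ (Assumption (Aiii)) so that, on the set where the maximum of the doubled test function is controlled by $v_1-h'$, one tests directly against the smooth function $h'$ and reduces to the obstacle-free comparison, exactly as in the classical doubly reflected case \cite{HH}. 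Combining this comparison with the fact that $u$ is a viscosity solution gives that $u$ is the unique one.
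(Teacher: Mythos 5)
Your proposal is correct and follows essentially the same route as the paper: monotone convergence $u_n\downarrow u$ plus Dini's theorem for local uniform convergence, the semijet stability lemma (Lemma 6.1 of \cite{CIL}) applied to the penalized single-obstacle PDEs \eqref{eq1.9} to pass to the limit in the sub- and supersolution inequalities (with the same case splits on $u=h$ and $u<h'$), and uniqueness via the comparison principle adapted from Theorem 6.3 of \cite{HH}. The only differences are cosmetic: you make explicit the observation that $u_{n}(t_n,x_n)>h(t_n,x_n)$ for large $n$ in the subsolution step (which the paper leaves implicit), and you sketch the doubling-of-variables comparison argument that the paper simply cites.
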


\begin{proof}
	By Theorem \ref{main}, for each $(t,x)\in[0,T]\times\mathbb{R}$, we have
	\begin{displaymath}
		u_n(t,x)\downarrow u(t,x),\quad \textrm{ as }n\uparrow \infty.
	\end{displaymath}
	Note that $u_n$ is continuous by Lemmas 6.4-6.6 in \cite{li2018reflected}. Since $u$ is also continuous, applying Dini's theorem yields that the sequence $u^n$ uniformly converges to $u$ on compact sets. The proof will proceed in the following two steps.\\
	
\noindent	\textbf{Step 1: Viscosity subsolution}. For each fixed $(t,x)\in(0,T)\times\mathbb{R}$, let $(p,q,X)\in\mathcal{P}^{2,+} u(t,x)$. Suppose that $u(t,x)=h(t,x)$. Noting that $u(t,x)\leq h'(t,x)$, it is easy to check that
	\begin{displaymath}
		\max\Big(u(t,x)-h'(t,x),\, \min\big(u(t,x)-h(t,x), -p-F(X,q,u(t,x),x,t)\big)\Big)\leq 0.
	\end{displaymath}
	Assume that $u(t,x)>h(t,x)$. It remains to prove that
	\begin{displaymath}
		-p-F(X,q,u(t,x),x,t)\leq 0.
	\end{displaymath}
	By Lemma 6.1 in \cite{crandall1992user}, there exist sequences
	\begin{displaymath}
		n_j\rightarrow \infty,\qquad (t_j,x_j)\rightarrow (t,x),\qquad (p_j,q_j,X_j)\in \mathcal{P}^{2,+} u_{n_j}(t_j,x_j),
	\end{displaymath}
	%\begin{align*}
	%n_j&\rightarrow \infty,\\
	%(t_j,x_j)&\rightarrow (t,x),\\
	%(p_j,q_j,X_j)&\in \mathcal{P}^{2,+} u_{n_j}(t_j,x_j),
	%\end{align*}
	such that $(p_j,q_j,X_j)\rightarrow(p,q,X)$. Recalling that $u_n$ is the viscosity solution to Equation \eqref{eq1.9}, hence a subsolution, we have, for any $j$,
    \begin{displaymath}
		-p_j-F(X_j,q_j,u_{n_j}(t_j,x_j),x_j,t_j)+n_{j}(u_{n_j}(t_j,x_j)-h'(t_j,x_j))^+\leq 0.
	\end{displaymath}
    Thus,
	\begin{displaymath}
		-p_j-F(X_j,q_j,u_{n_j}(t_j,x_j),x_j,t_j)\leq 0.
	\end{displaymath}
	Letting $j$ go to infinity in the above inequality yields the desired result.
	%\begin{displaymath}
	%-p-F(X,q,u(t,x),x,t)\leq 0.
	%\end{displaymath}
	Therefore,  $u$ is a subsolution of the fully nonlinear obstacle problem \eqref{eq1.7}.\\
	
	\noindent	\textbf{Step 2: Viscosity supersolution.} For each fixed $(t,x)\in(0,T)\times\mathbb{R}$, and $(p,q,X)\in\mathcal{P}^{2,-} u(t,x)$. It is sufficient to show that when $u(t,x)<h'(t,x)$,
	\begin{displaymath}
		-p-F(X,q,u(t,x),x,t)\geq 0.
	\end{displaymath}
	Applying Lemma 6.1 in \cite{crandall1992user} again, there exist sequences
	\begin{displaymath}
		n_j\rightarrow \infty,\quad (t_j,x_j)\rightarrow (t,x),\quad (p_j,q_j,X_j)\in \mathcal{P}^{2,-} u_{n_j}(t_j,x_j),
	\end{displaymath}
	%\begin{align*}
	%n_j&\rightarrow \infty,\\
	%(t_j,x_j)&\rightarrow (t,x),\\
	%(p_j,q_j,X_j)&\in \mathcal{P}^{2,-} u_{n_j}(t_j,x_j),
	%\end{align*}
	such that $(p_j,q_j,X_j)\rightarrow(p,q,X)$. Since $u_n$ is the viscosity solution to Equation \eqref{eq1.9}, hence a supersolution, we derive that for any $j$,
	\begin{displaymath}
		-p_j-F(X_j,q_j,u_{n_j}(t_j,x_j),x_j,t_j)+n_{j}(u_{n_j}(t_j,x_j)-h'(t_j,x_j))^+\geq 0.
	\end{displaymath}
	Given that 	$u_n$ converges uniformly on compact sets, for $j$ sufficiently large,  $u_{n_j}(t_j,x_j)<h'(t_j,x_j)$ under the assumption that $u(t,x)<h'(t,x)$.  Therefore, as $j$ tends to infinity, the above inequality implies that
	\begin{displaymath}
		-p-F(X,q,u(t,x),x,t)\geq 0,
	\end{displaymath}
	which is the desired result. Consequently, 
	$u$ is a viscosity solution of \eqref{eq1.7}.
	
Following a similar analysis as the proof of Theorem 6.3 in \cite{hamadene2005bsdes}, we can establish the uniqueness of the viscosity solution to the fully nonlinear obstacle problem \eqref{eq1.7}. This concludes the proof.
\end{proof}

\section*{Acknowledgments}
	The research of Li was supported  by the National Natural Science Foundation of China (No. 12301178), the Natural Science Foundation of Shandong Province for Excellent Young Scientists Fund Program (Overseas) (No. 2023HWYQ-049) and  the Qilu Young Scholars Program of Shandong University.

\section*{Conflict of Interest}
The authors declared that there is no conflict of interest.

\bibliography{bib-ms}

\end{document}